\documentclass[11pt]{article}

\usepackage{fixed_locus}

\title{Structure of the Components of the Fixed Locus \\ of a Self-Map of the
  Berkovich Line}

\author{Xander Faber \\
  IDA / Center for Computing Sciences \\
Bowie, MD \\
xander@super.org
\and
Niladri Patra \\
Indian Statistical Institute, Delhi Centre, \\
S. J. S. Sansanwal Marg, New Delhi, India. \\
niladri@math.tifr.res.in}

\begin{document}
\maketitle

\begin{abstract}
  We describe the local and global structure of the fixed locus for the action
  of a rational function on the Berkovich projective line over a complete
  nontrivially-valued algebraically closed nonarchimedean field. This includes
  a bound for the number of connected components that is sharp when the residue
  characteristic is large or zero. The case of small nonzero residue
  characteristic will be treated in a subsequent article.
\end{abstract}

\tableofcontents


\section{Introduction}

In the study of the iteration of a holomorphic function $\varphi$ on the
Riemann sphere, the local behavior near fixed points, and by extension periodic
points, dictates a great deal about the global dynamics; see
\cite[\S8-14]{Milnor_Dynamics_Book_2006} and \cite{McMullen_Isospectral}. The
same is true when we consider the dynamics of a rational function over a
nontrivially-valued nonarchimedean field $K$, but there are some new
wrinkles. One that has been addressed at length over the past two decades is
the fact that the natural topology of nonarchimedean fields is sufficiently
unpleasant that one should enrich the standard projective line $\PP^1(K) = K
\cup \{\infty\}$ to obtain the Berkovich projective line $\BerkK$, which is
compact, Hausdorff, and uniquely arcwise connected
\cite{Baker-Rumely_BerkBook_2010,
  Benedetto_dynamics_book,Rivera-Letelier_Asterisque_2003}.

Shifting to this improved topological setting introduced its own new wrinkle,
though. If the fixed points of a (complex) holomorphic function $\varphi(z)$
have an accumulation point in the domain of definition, then $\varphi(z) =
z$. This is the entire basis for analytic continuation. But a rational function
$f \in K(z)$, viewed as a self-map of $\BerkK$, is not so constrained: the set
of fixed points of $f$ can contain closed arcs or open neighborhoods for the
strong topology. Our goal in the present note is to bring some order to this
new chaos. We characterize and bound the components of the fixed locus, and we
prove rigidity results that describe the internal structure and the
$K$-rational fixed points of a given component. This work and its sequel
\cite{Faber_Patra_fixed_bound} constitute a massive extension of the second
author's paper on the fixed locus for a function with potential good reduction
\cite{Patra_fixed}.


\subsection{Results}

Let $K$ be an algebraically closed field that is complete with respect to a
nontrivial nonarchimedean absolute value. We write $\BerkK$ for the analytic
projective line over $K$ in the sense of Berkovich, and we write $f \in K(z)$
for a nonconstant rational function, which we view as a self-map
${f\colon\BerkK\to\BerkK}$. All topological statements about $\BerkK$ refer to
the weak topology, unless otherwise specified.

Given a nonconstant rational function $f \in K(z)$, the \textbf{fixed locus} of
$f$ is defined to be the set
\[
  \Fix(f) = \{x \in \BerkK : f(x) = x\}.
  \]
Evidently, $\Fix(f)$ is a closed subset of $\BerkK$. The primary goal of this
article is to describe the local and global structure of the connected
components of $\Fix(f)$.

\begin{definition}
  \label{def:component_types}
  Let $X$ be a component of the fixed locus of $f$.
  \begin{enumerate}
    \item $X$ is \textbf{classical} if it consists of a single classical
      (a.k.a. type~I) fixed point.
    \item $X$ is \textbf{indifferent} if every point of $X$ is indifferent; and
    \item $X$ is \textbf{peaked} if $X$ contains a repelling fixed point of
      type~II. 
  \end{enumerate}
\end{definition}

We argue in Proposition~\ref{prop:components} that each component of $\Fix(f)$
is closed and can be described by exactly one of the above types. The idea
behind the ``peaked'' terminology is that the points with local degree larger
than~1 must be isolated in the component, so they look like ``peaks'' when you
graph the local degree function.

The first main result of this paper shows that the number of classical fixed
points in a non-classical component depends only on the local behavior of $f$
at its type~II repelling fixed points.

\begin{theoremA}
  Let $f \in K(z)$ be a nonconstant rational function, and let $X$ be a
  non-classical component of $\Fix(f)$. Then $X$ contains finitely many type II
  repelling fixed points, and the number of classical fixed points inside $X$,
  counted according to fixed-point multiplicity, is
  \[
     2 + \sum_{\substack{x \in X \\ \text{type~II repelling}}} \left(\deg_f(x) - 1 - \Ncf(f,x)\right),
  \]
  where we write $\Ncf(f,x)$ for the number of critically fixed directions of
  the tangent map $T_x f$.
\end{theoremA}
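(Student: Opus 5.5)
The plan is to turn the count into an Euler-characteristic computation on a finite subtree of $X$, with contributions read off from the tangent maps at its vertices. The basic local input is the standard Rivera-Letelier-type description of fixed directions. Let $x$ be a type~II fixed point. Then $T_x f$ is a rational function over the residue field $\tilde K$ of degree $\deg_f(x)$, with $\deg_f(x)+1$ fixed directions counted with multiplicity $s_v$.

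First I will prove the key local lemma. Let $v$ be a fixed direction at $x$ such that the open ball $B_x(v)$ meets $X$ only in classical points. Then the number of classical fixed points in $B_x(v)$, counted with multiplicity, is $s_v-\delta_v$. Here $\delta_v=1$ if $v$ is critically fixed (that is, $m_f(v)>1$) and $\delta_v=0$ otherwise. A companion statement handles a direction $v$ pointing along an arc of $X$ leaving $x$: in that case $s_v-\delta_v=1$.

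I would prove both statements by changing coordinates so that $x=\zeta_{0,1}$ and $v$ is the direction containing $0$. I then compare the Newton polygon (Weierstrass degree) of $f(z)-z$ on the ball with the reduction $\tilde f(z)-z$. The reduction's order of vanishing at $0$ is $s_v$. The critical case loses one because of the $\tilde f'$-term. Along an arc of fixed points, this computation forces $T_xf$ to have a simple non-critical fixed point, or a parabolic one where the arc eats exactly one unit.

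Next, finiteness of the repelling points. Each repelling type~II point in $X$ has $\deg_f(x)\ge 2$. Since $X$ is connected, I will argue via the local lemma that two such points are separated by an arc along which $T f$ fixes directions with the multiplicities above. The multiplicity bookkeeping then gives a total bounded by $2\deg f - 2$ or similar; alternatively, I can appeal to the fact that type~II repelling fixed points are isolated and lie in a finite tree spanned by the classical fixed points and critical points.

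Then I choose a finite subtree $\Gamma\subset X$ whose vertex set $V$ contains all repelling type~II points and all branch points of $X$ at which $T_xf$ is not the identity. The indifferent points whose tangent map is the identity form open pieces of $X$. I expect to handle these by checking that the fixed-point count across such a region depends only on its boundary. Concretely, I would collapse each such region, replacing it by a single vertex of degree-one tangent behaviour, and argue that every classical point of $X$ lies in a direction $v$ off $\Gamma$ at some vertex.

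At an indifferent vertex, $\deg_f(x)=1$, $N_{cf}=0$, and $\sum_v s_v=2$. Summing the local lemma over all vertices and all fixed directions gives
\[
\#\{\text{classical fixed points in } X\} = \sum_{x\in V}\sum_{v}(s_v-\delta_v) - 2\,\#E(\Gamma).
\]
The subtraction of $2\,\#E(\Gamma)$ holds because each edge is seen from both endpoints and contributes exactly $1$ at each. Since $\sum_v(s_v-\delta_v)=\deg_f(x)+1-\Ncf(f,x)$ and $\#E=\#V-1$, this equals
\[
2+\sum_{x\in V}\left(\deg_f(x)-1-\Ncf(f,x)\right).
\]
Indifferent vertices contribute $0$, which yields the formula.

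The main obstacle is the local lemma, specifically the exact loss of one unit in critically fixed directions and along arcs of $X$. Type~III and IV points along edges, and the regions where $T_xf$ is the identity, also need care; I would first try to handle these by a continuity/limit argument on the radius function of $f(z)-z$ along the edge.
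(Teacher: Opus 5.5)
There is a genuine gap in your key local lemma. Note that your $s_v$ is the fixed-point multiplicity $\tilde F_f(v)$ of $v$ under $T_xf$, not the surplus multiplicity $s_f(v)$. As written, the lemma is false. Since $X$ is connected, and a non-critically fixed direction always contains an arc of fixed points (Second Indifference Lemma), your hypothesis forces $B_x(v)^-\cap X=\varnothing$. So it applies only to critically fixed $v$. There the First Identification Lemma gives $1+s_f(v)\ge 1$ classical fixed points, not $0$; take $f(z)=z^2$, $x=\zeta_{0,1}$, $v=\vec 0$.

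What your count actually needs is $F_f(v,X)$, the number of classical fixed points of $X$ in $B_x(v)^-$. Your mechanism cannot produce it. A Newton-polygon computation on the whole ball returns every classical fixed point there, i.e.\ $s_f(v)+\tilde F_f(v)$, and no ``$\tilde f'$-term'' removes one. The missing idea is how to discard the fixed points lying in the disks $U_\alpha$ of $B_x(v)^-\smallsetminus X$.

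The paper does this with the Jump Number Formula. Because $\deg_f(y,\ww_x)=1$ along arcs of $X$, one gets $s_f(v)=\sum_y(\deg_f(y)-1)+\sum_\alpha s_f(\uu_\alpha)$, with $y$ ranging over the repelling type~II points of $X\cap B_x(v)^-$. By the Second Indifference Lemma, each boundary direction $\uu_\alpha$ is of one of two kinds:
\begin{itemize}
\item non-fixed, in which case $U_\alpha$ contains $s_f(\uu_\alpha)$ classical fixed points;
\item critically fixed at a repelling point, in which case $U_\alpha$ contains $1+s_f(\uu_\alpha)$.
\end{itemize}
This is the real source of the $-\Ncf(f,y)$ terms: those fixed points lie outside $X$. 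The resulting bound $\sum_y(\deg_f(y)-1)\le F_f(v)-\tilde F_f(v)$ is also what proves finiteness of the repelling points, which your sketch leaves unproved.

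Your edge bookkeeping rests on two further false claims.
\begin{itemize}
\item The companion statement $s_v-\delta_v=1$ fails for parabolic directions. For $f(z)=z+1$ and $x=\zeta_{a,1}$ with $|a|>1$, $T_xf$ is a translation whose only fixed direction points along $X$ with multiplicity~$2$.
\item The claim that every classical point of $X$ lies in a direction off $\Gamma$ at a vertex fails when an edge is id-indifferent. Take $f(z)=z+az^3+b/z$ with $|b|=1>|a|$. The points $\zeta_{0,1}$ and $\zeta_{0,|a|^{-1/2}}$ are repelling, with tangent maps $z+\tilde b/z$ and $z+cz^3$ ($c\ne 0$). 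The open arc between them is id-indifferent, and both end directions have multiplicity~$3$. The four classical fixed points of this component all have absolute value $|a|^{-1/4}$ and branch off the interior of that edge.
\end{itemize}

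The two errors happen to cancel. Applying the correct lemma at both endpoints shows that the side branches of an edge with end directions $v,w$ carry exactly $(\tilde F_f(v)-1)+(\tilde F_f(w)-1)$ classical fixed points, so your final formula is right. Proving this, however, needs the Jump-Number analysis above; ``collapsing id-indifferent regions'' does not substitute for it.

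Once you have the correct lemma (the paper's Lemma~\ref{lem:classical_fixed}), no edge bookkeeping is needed. For a peaked component, the paper simply sums the lemma over the non-critically fixed directions at one repelling point, using that $T_xf$ has $\deg_f(x)+1$ fixed points. It treats indifferent components separately via Proposition~\ref{prop:two_fixed_pts}.
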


\begin{remark}
  \label{rem:sum_over_all_pts}
  If $x$ is either a classical fixed point or a non-classical indifferent fixed
  point, then $\deg_f(x) - 1 = \Ncf(f,x) = 0$. So we may as well sum over all
  $x \in X$ in Theorem~A.
\end{remark}

Theorem~A implies that an indifferent component contains exactly two classical
fixed points, counted with multiplicity. As $f$ has precisely $d+1$ classical
fixed points, we see that $\Fix(f)$ admits at most $\frac{d+1}{2}$ indifferent
components. We are unsure if this bound is sharp. 

In \cite{Rumely_new_equivariant}, Rumely defined the ``crucial weight''
$\wR(x)$ of a point $x \in \BerkK$ relative to a rational function $f$; we
recall its definition in \S\ref{sec:crucial_weight}. Rumely proved a striking
formula for the total crucial weight of all points:

\begin{theorem}[Rumely's Weight Formula {\cite[Thm.~6.1]{Rumely_new_equivariant}}]
  If $f \in K(z)$ is a rational function of degree $d \ge 1$, then
  \[
    \sum_{x \in \BerkK} \wR(x) = d - 1.
  \]
\end{theorem}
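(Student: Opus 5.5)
Since this is Rumely's theorem, the plan is to reconstruct his potential-theoretic argument rather than find a new one. The idea is to realize the crucial weights as the masses of a measure obtained from the Laplacian of a natural piecewise-affine function on a finite tree. We then read off the total mass from the boundary behavior of that function.

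First, I will work with the function $\mathrm{ordRes}_f$ on Berkovich hyperbolic space $\BerkK\smallsetminus\PP^1(K)$. At a type~II point $\xi = M(\zeta_{0,1})$, with $M \in \mathrm{PGL}_2(K)$, it is the order of the resultant of a normalized lift of the conjugate $f^M$; it extends to all non-classical points by continuity. I will recall from Rumely's work, which the paper quotes in \S\ref{sec:crucial_weight}, that $\mathrm{ordRes}_f$ is piecewise affine and convex on each path, and that it is constant on branches off the tree $\Gamma=\Gamma_{\mathrm{Fix,Repel}}$. This tree is spanned by the classical fixed points and the type~II repelling fixed points. The crucial weight $\wR(x)$ is by definition supported on finitely many points of $\Gamma$. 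The first step is therefore to reduce the statement to a finite computation on $\Gamma$, truncated at small neighborhoods of its classical endpoints.

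Second, I will compute the slopes of $\mathrm{ordRes}_f$ locally. Along an edge of $\Gamma$ heading toward a classical fixed point of multiplicity $m$, the function eventually grows linearly, with slope an explicit affine expression in $d$ and $m$. This comes from changing coordinates so that the fixed point is $0$ and computing the resultant of the conjugate by $z\mapsto az$ as $|a|\to 0$. At an interior vertex $x$ of $\Gamma$, I will compute the sum of outgoing slopes over all tangent directions in $\Gamma$. For type~II repelling fixed points this uses the reduction of $f$ at $x$: the degree $\deg_f(x)$ and the shearing and fixed directions of $T_xf$. For non-fixed branch points it uses the valence in $\Gamma$. The target is an identity of the form
\[
  \sum_{\vec v \in T_x\Gamma} \partial_{\vec v}\,\mathrm{ordRes}_f = 2(d-1)\,\wR(x) - c(x),
\]
where $c(x)$ is a correction depending only on the valence of $x$ in $\Gamma$. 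I expect this local slope computation, and the matching of the weight's case definition to it, to be the main obstacle. I would handle it by conjugating $x$ to the Gauss point and computing, case by case, the order of vanishing of the resultant under $z\mapsto az$ and $z\mapsto z+b$ moves in each direction.

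Finally, I will sum these identities over all vertices of the truncated tree. Each edge contributes two opposite slopes, so it cancels. What remains is the sum of the terminal slopes at the classical ends, which are known from the second step, together with the valence corrections. By the Euler characteristic of a tree, the corrections total $2 - (\text{number of ends})$ in appropriate units. Using that the multiplicities of the classical fixed points add up to $d+1$, the end contributions and corrections should combine to give $2(d-1)\sum_x \wR(x) = 2(d-1)^2$, that is, $\sum_x \wR(x) = d-1$. The degenerate case in which $\Gamma$ is a single point will be checked directly, including the potential good reduction case where the answer is $\deg_f(x)-1=d-1$.
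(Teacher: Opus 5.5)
\paragraph{Verdict.} Your route is a genuinely different one: it is Rumely's original potential-theoretic argument. The paper never uses $\mathrm{ordRes}_f$. It collapses each non-classical component $X_i$ of $\Fix(f)$ to a vertex of a tree, computes the valence of that vertex as $\sum_{x\in X_i}\wR(x)-\alpha(X_i)$ using Theorem~A, and finishes with a handshake count together with $\sum_i\alpha(X_i)=d+1-c-2n$. Your route could work in principle, but as written the decisive step is both unproved and misstated, so there is a genuine gap.

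\paragraph{The vertex identity is wrong as stated.} The identity you target, with coefficient $2(d-1)$ and a correction depending only on valence, cannot hold. Move a type~II fixed point to $\zeta_{0,1}$ and push along $z\mapsto uz$. With $\epsilon(\vec v)=1$ if and only if $\vec v$ is fixed by $T_xf$, this gives $\partial_{\vec v}\mathrm{ordRes}_f=d^2+d-2d\big(s_f(\vec v)+\epsilon(\vec v)\big)$. Summing over $T_x\Gamma$, the identity that actually holds at points of $\Gamma$ is
\[
  \sum_{\vec v\in T_x\Gamma}\partial_{\vec v}\mathrm{ordRes}_f(x)=2d\,\wR(x)+(d^2-d)\big(v_\Gamma(x)-2\big).
\]
Take $d=3$: for both $z^3-z^2+z$ and $z^3/(z+\pi)$ with $0<|\pi|<1$, the Gauss point has valence~$3$ in $\Gamma$. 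The crucial weights are $\wR=2$ and $\wR=1$, and the slope sums are $18$ and $12$.

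\paragraph{The end slopes and the role of $d+1$.} The terminal slope toward every classical fixed point is $d^2-d$, whatever its multiplicity, because a fixed point at $0$ forces $F_0(0,1)=0\ne F_1(0,1)$. So the fact that the multiplicities sum to $d+1$ plays no role in your final bookkeeping. The number of ends cancels against the Euler characteristic of the tree, and one obtains $2d\sum\wR=2(d^2-d)$.

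\paragraph{A false but unused claim.} $\mathrm{ordRes}_f$ is not constant off $\Gamma$. For $z^2$, it has slope $6$ from $\zeta_{0,1}$ toward any $c$ with $|c|=|c-1|=1$. Your argument never actually uses this claim.

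\paragraph{What proving the vertex identity requires.} The real content is proving the displayed identity at every point of $\Gamma$, and your sketch does not identify the inputs this needs.
\begin{enumerate}
  \item At a type~II fixed point with $T_xf\ne\mathrm{id}$, the First Identification Lemma is what puts every fixed or bad direction into $\Gamma$.
  \item At a non-fixed point, you need Rumely's count: the direction containing $f(x)$ carries $s+1$ classical fixed points, and every other direction carries $s$.
  \item At id-indifferent points the First Identification Lemma fails. For $f(z)=z+\pi/z$, the bad direction $\vec 0$ at $\zeta_{0,1}$ contains no classical fixed point, since $\infty$ is a triple fixed point. You must show that every such bad direction meets the repelling type~II locus; this is exactly why repelling points are put into $\Gamma$.
\end{enumerate}

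\paragraph{Finiteness is not by definition.} You assert that $\wR$ is finitely supported by definition and treat $\Gamma$ as a finite tree. Neither is automatic. Finiteness of the type~II repelling fixed points, and of the points with a shearing direction, is a theorem; in the paper it comes from Theorem~A and Corollary~\ref{cor:components_meet_GammaFix}. Without it, your edge-cancellation sum is not justified.

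\paragraph{The degenerate case.} Your degenerate case is also misdescribed. $\Gamma$ contains every classical fixed point, so under potential good reduction it is not a point.

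\paragraph{What each route buys.} The paper replaces all of this analytic input by the classical-fixed-point count in Theorem~A, with no resultants, convexity, or Laplacians. Rumely's route, once completed, gives more than the total mass. It describes the Laplacian of $\mathrm{ordRes}_f$ on $\Gamma$ point by point in terms of $\wR$ and the valence.
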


Rumely's original proof proceeds through a careful study of the Laplacian of
the function $\mathrm{ordRes}_f : \BerkK \to [0,\infty]$. Theorem~A enables us
to give a new proof of Rumely's Weight Formula that avoids potential theory
entirely.

It is well known that a rational function $f$ of degree $d \ge 1$ has exactly
$d+1$ $K$-rational fixed points when counted appropriately, but it is not at
all obvious whether the more general object $\Fix(f)$ even has finitely many
components. Rumely's Weight Formula and Theorem~A give an easy argument for a
weak bound:

\begin{corollary}
  \label{cor:weak_bound}
  The fixed locus of a rational function of degree $d \ge 1$ has at most $2d$
  connected components.
\end{corollary}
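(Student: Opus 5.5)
We need to bound the number of components by $2d$. By Proposition~\ref{prop:components}, each component is of exactly one of the three types: classical, indifferent, or peaked. The plan is to handle classical and indifferent components by counting classical fixed points, and peaked components by Rumely's crucial weight.

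\emph{Classical and indifferent components.} A classical component is a single classical fixed point, so it carries at least one classical fixed point counted with multiplicity. By Theorem~A (with Remark~\ref{rem:sum_over_all_pts}), an indifferent component contains exactly $2$ classical fixed points. More generally, every non-classical component contains
\[
  2 + \sum_{x}\left(\deg_f(x)-1-\Ncf(f,x)\right)
\]
classical fixed points, where the sum runs over its type~II repelling points. Since $f$ has exactly $d+1$ classical fixed points counted with multiplicity, we get
\[
  \#\{\text{classical comps}\} + 2\,\#\{\text{indifferent comps}\} \le d+1 .
\]

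\emph{Peaked components.} The key input is a lower bound on crucial weight: each type~II repelling fixed point $x$ should satisfy $\wR(x) \ge 1$, or at least each peaked component should carry total weight at least a fixed positive amount. I expect this to follow from the definition recalled in \S\ref{sec:crucial_weight}. For a type~II fixed point, the weight is roughly $\deg_f(x) - 1$ adjusted by the number of shearing or fixed directions, and it is nonnegative and a positive integer at repelling fixed points. Rumely's Weight Formula then gives $\sum \wR = d-1$, so there are at most $d-1$ peaked components.

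\emph{Combining.} Adding the two bounds gives
\[
  \#\text{components} \le (d+1) + (d-1) = 2d .
\]
The case $d=1$ is consistent, since then there are no repelling points and at most two classical fixed points. If instead only $\wR(x) \ge 0$ is available together with positivity at repelling type~II points, I would alternatively use Theorem~A's formula. The classical points in a peaked component number at least $2$ minus the deficits $\Ncf(f,x)-(\deg_f(x)-1)$. The idea is to trade classical points against weight, showing that for each component,
\[
  (\text{weight}) + (\text{classical count}) \ge 1 .
\]
Summing over components gives at most $(d-1)+(d+1)=2d$, the same conclusion.

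The main obstacle is the weight step: checking from Rumely's definition that every peaked component contributes at least $1$ to $\sum \wR$, or, failing that, at least $1$ to the combined weight-plus-classical count. In particular, a repelling type~II point whose tangent map has many critically fixed directions might have small weight. For such a point I would try to show that the weight is at least $\deg_f(x)-1-\Ncf(f,x)$ plus something nonnegative. Then the combined contribution of the component is
\[
  2 + \sum_x \left(\deg_f(x)-1-\Ncf(f,x)\right) + \sum_x \wR(x) \ge 1 ,
\]
which is exactly what the combining step needs.
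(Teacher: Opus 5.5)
Your main argument is correct and is essentially the paper's proof. You bound the components meeting $\PP^1(K)$ by $d+1$ (your count of classical components plus twice the indifferent ones is just a different bookkeeping of this), and you bound the peaked components by $d-1$ using Rumely's Weight Formula.

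The step you flag as the main obstacle is immediate. By definition $\wR(x) = \deg_f(x) - 1 + \Nshear(f,x) \ge \deg_f(x) - 1 \ge 1$ at every repelling type~II fixed point, since critically fixed directions are not subtracted, and $\wR \ge 0$ everywhere. So the fallback trade-off argument is unnecessary; as sketched it would not close anyway, because a lower bound of the form $\deg_f(x)-1-\Ncf(f,x)$ equals $-2$ at an isolated repelling point.
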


\begin{proof}
  There are at most $d+1$ components that contain a classical fixed
  point. Every indifferent component contains a classical fixed point by
  Theorem~A, and every peaked component contains a type~II repelling fixed
  point by definition. Rumely's Weight Formula implies that there are at most
  $d-1$ peaked components since a repelling type~II fixed point has positive
  crucial weight. Thus, there are at most $(d+1) + (d-1) = 2d$ components of
  $\Fix(f)$.
\end{proof}

With an extra assumption on the residue characteristic of $K$, we can prove
something much stronger.

\begin{theoremB}
  Write $p \ge 0$ for the residue characteristic of $K$ and let $f \in K(z)$ be
  a nonconsant rational function of degree $d \ge 1$. Suppose that either $p =
  0$ or $p > d$. Then every connected component of $\Fix(f)$ contains a
  classical fixed point. In particular, the fixed locus of $f$ has at most
  $d+1$ connected components.
\end{theoremB}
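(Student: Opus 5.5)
By Proposition~\ref{prop:components}, every component $X$ of $\Fix(f)$ is classical, indifferent, or peaked. Classical components contain a classical fixed point by definition, and indifferent components do too by Theorem~A, since the count there is $2$. So the whole content of Theorem~B is the peaked case. Once we know that every component contains a classical fixed point, the bound follows at once: distinct components are disjoint, and $f$ has exactly $d+1$ classical fixed points counted with multiplicity.

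The plan for a peaked component $X$ is to show that the formula in Theorem~A is positive. Since $2>0$, it is enough to prove that every type~II repelling fixed point $x\in X$ satisfies
\[
  \deg_f(x) - 1 - \Ncf(f,x) \ge 0 .
\]
Equivalently, the tangent map $T_x f$, a rational map of degree $m=\deg_f(x)\ge 2$ on $\PP^1$ over the residue field $k$, has at most $m-1$ critically fixed directions. A critically fixed direction is a direction $\vec v$ with $T_xf(\vec v)=\vec v$ that is also a critical point of $T_x f$. The hypothesis $p=0$ or $p>d$ enters here. Because $m\le d<p$, the reduction map $\bar f = T_x f$ is separable and tamely ramified at every point. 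Riemann--Hurwitz then gives exactly $2m-2$ critical points counted with multiplicity, and each critical point $c$ has multiplicity $e_c - 1$, where $e_c$ is the ramification index.

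To bound the critically fixed directions, I would use the fixed-point multiplicities of $\bar f$. It has $m+1$ fixed points counted with multiplicity, provided $\bar f\ne \mathrm{id}$, which holds since $m\ge2$. At a fixed point $c$ that is also critical, the multiplier is $0$, so $c$ is a simple fixed point. That alone gives only the trivial bound $\Ncf\le m+1$. To do better, I would apply the holomorphic fixed point index formula for $\bar f$ over $k$, which is valid whenever the fixed points are simple: $\sum 1/(1-\lambda_c)=1$, and each superattracting fixed point contributes $1$. If there were $m+1$ simple fixed points all superattracting, the index sum would be $m+1\neq 1$ in $k$ when $p=0$ or $p>m$, a contradiction. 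This rules out $\Ncf=m+1$, but I still need to rule out $\Ncf=m$.

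For the remaining case, I would refine the count by counting critical multiplicity instead. Suppose $\Ncf\ge m$. Then at least $m$ critical points are fixed, so the non-fixed critical multiplicity is at most $m-2$, which is consistent with Riemann--Hurwitz. So I would instead compare with the full index formula. The $m$ superattracting fixed points contribute $m$, the remaining fixed point (or points) must contribute $1-m$, and the total fixed multiplicity is $m+1$, so there is one more simple fixed point with multiplier $\lambda$ satisfying $1/(1-\lambda)=1-m$. This is possible for a general map. So the naive argument fails, and I would need to bring in the Berkovich geometry to exclude it. That is the main obstacle.

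My idea for the obstacle is to use the repelling hypothesis together with connectivity of $X$ through $x$. Perhaps the correct inequality only needs $\sum_{x}(\deg_f(x)-1-\Ncf(f,x)) > -2$ over the whole component. That would follow if I could show that the directions at $x$ along which $X$ leaves $x$, which are the fixed directions with multiplier $1$ (indifferent ones), force $\bar f$ to have a fixed point of multiplicity at least $2$ or a non-critical fixed direction. Then a critically fixed direction contains no further points of $X$, so $X$ near $x$ lives in the non-critical fixed directions. If instead $X=\{x\}$ and all $m+1$ fixed directions are critically fixed, the index argument above gives a contradiction. The hardest step is pinning down exactly which configurations of $T_xf$ can occur for an $x$ in a component with no classical point. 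I would attack it by combining the index formula with an analysis of the tree structure of $X$, showing that each edge of $X$ at $x$ corresponds to a multiple fixed point of $\bar f$, so that the index sum forces $\deg_f(x)-1-\Ncf(f,x)\ge -1$ with equality only at leaves. The tree then sums to a positive total.
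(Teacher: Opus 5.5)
Your reduction is sound: by Theorem~A it suffices to show $\alpha(X)\ge -1$ for a peaked component $X$. Your index-formula argument also correctly handles a singleton $X=\{x\}$: if all $\deg_f(x)+1$ fixed directions are superattracting, then $\deg_f(x)=0$ in $\tilde K$. This is the $n=1$ case of the paper's argument.

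Beyond that there is a genuine gap. The pointwise inequality $\deg_f(x)-1-\Ncf(f,x)\ge 0$ is false under the hypotheses. Take $f(z)=z^2$ with $p\ne 2$. The Gauss point has $T_xf(z)=z^2$, whose critically fixed directions are $\vec 0$ and $\vec\infty$, so the quantity is $-1$; its component $[\zeta_{0,1},1]$ contains exactly one classical fixed point.

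Your fallback assumes that the directions along which $X$ leaves $x$ have multiplier $1$. That conflates the indifference of the points on an edge (local degree $1$) with the multiplier of $T_xf$ in that direction. By the Second Indifference Lemma, an edge is a non-critically fixed direction with arbitrary multiplier $\tilde\lambda\ne 0$; in the example above, $\tilde\lambda=2$. In a component without classical fixed points there are no id-indifferent points (Proposition~\ref{prop:hyperbolic_hull}), so $\tilde\lambda\ne 1$ and every edge is a \emph{simple} fixed point of $T_xf$. Hence at each vertex $\Ncf(f,x)=\deg_f(x)+1-v(x)$, so $\deg_f(x)-1-\Ncf(f,x)=v(x)-2$. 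Over any finite tree these terms sum to exactly $-2$, not to a positive total. Counting alone can never exclude a hyperbolic component; it only reproduces Theorem~A. The obstruction is arithmetic.

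What is missing is a way to glue the index formulas at different vertices. At each vertex the index formula reads $\deg_f(x)+1-v(x)+\sum_{e\ni x}(1-\lambda_{x,e})^{-1}=1$ in $\tilde K$. The Fourth Indifference Lemma gives $\lambda_{x,e}\lambda_{y,e}=1$ at the two ends of an edge $e$, hence $(1-\lambda_{x,e})^{-1}+(1-\lambda_{y,e})^{-1}=1$. Summing over the tree yields $\sum_i\deg_f(x_i)=n-1$ in $\tilde K$ (Proposition~\ref{prop:hyperbolic_formula}).

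For $p=0$ this is impossible, since $\sum_i\deg_f(x_i)-(n-1)\ge n+1$. For $p>d$ you additionally need a global bound. The local bound $\deg_f(x_i)\le d<p$ at each vertex separately does not control the sum when $n>1$. The paper uses Rumely's Weight Formula to get $1+\sum_i(\deg_f(x_i)-1)\le 1+\sum_i\wR(x_i)\le d<p$, so this positive integer cannot vanish in $\tilde K$.
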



In the sequel to this paper \cite{Faber_Patra_fixed_bound}, we improve the
bound in Corollary~\ref{cor:weak_bound} when the residue characteristic $p$ is
at most the degree~$d$: the number of components of $\Fix(f)$ is at most $d +
\frac{d}{p} + 1$. 

The structure of this article is as follows.  In
Section~\ref{sec:preliminaries}, we will prove some elementary topological
results on connected subsets of $\BerkK$ and $\Fix(f)$, and then recall the
necessary background on surplus multiplicities and classical fixed points. In
Section~\ref{sec:indifferent}, we discuss the local structure of a component of
$\Fix(f)$ near an indifferent fixed point. We then explicitly describe the
locus of fixed points for an invertible map in Section~\ref{sec:invertible}.
In Section~\ref{sec:indifferent_components}, we look at the global structure of
indifferent components. We prove Theorem~A in
Section~\ref{sec:counting_classical}. Section~\ref{sec:hyperbolic} is devoted
to studying hyperbolic components of $\Fix(f)$ --- i.e., those that contain no
classical fixed point. Finally, we derive Rumely's Weight Formula and some
consequences in Section~\ref{sec:crucial_weight}, including Theorem~B.


\subsection{Notation}

If $K$ is an algebraically closed field that is complete with respect to a
nontrivial nonarchimedean absolute value $|\cdot|$, we write $\cO$ for its
valuation ring, $\mm$ for the maximal ideal of $\cO$, and $\tilde K = \cO /
\mm$ for the residue field. For an element $a \in \cO$, we write $\tilde a$ for
its image in $\tilde K$. For a polynomial $g \in \cO[z]$, we write $\tilde g$
for its image in $\tilde K[z]$. Similarly, if $f = g / h \in K(z)$ is a
rational function with $g, h \in \cO[z]$, we write $\tilde f = \tilde g /
\tilde h$, which is well defined up to multiplication by a unit in $\cO$.

For the Berkovich line $\BerkK$, we borrow most of our notation from
\cite{Baker-Rumely_BerkBook_2010}; see \cite[\S2]{Faber_Berk_RamI_2013} for a
brief synopsis. We draw attention to a few small deviations that have either
become more common in other writings, or which we have found useful in the
present text. Let $f \in K(z)$ be a nonconstant rational function. We write
$\deg_f(x)$ for the local degree of $f$ at $x$; this corresponds to the
``multiplicity'' $m_f(x)$ in \cite{Baker-Rumely_BerkBook_2010}. If $T_x$ is the
space of directions at $x \in \BerkK$, we write $T_x f : T_x \to T_{f(x)}$ for
the induced map on directions. By definition, a direction $\vv \in T_x$ is a
connected component of $\BerkK \smallsetminus \{x\}$; we write $B_x(\vv)^-$
instead of $\vv$ when we wish to discuss membership in this open ball.


\section{Preliminaries}
\label{sec:preliminaries}

We begin this section with some generalities on connected subsets of
$\BerkK$. Then we justify the classification of the connected components of
$\Fix(f)$ from Definition~\ref{def:component_types}. We close by collecting
several useful results regarding the surplus multiplicity and fixed points.


\subsection{Connected subsets of \texorpdfstring{$\BerkK$}{the analytic line}}

Let $T$ be a Hausdorff topological space.  An \textbf{arc} in $T$ is the image
of a continuous injective map $s : [0,1] \to T$. Here $s(0)$ and $s(1)$ are the
\textbf{endpoints} of the arc.  We say that $T$ is \textbf{arcwise connected}
if for any two distinct points $x,y \in T$, there is an arc with endpoints $x$
and $y$; such an arc is said to \textbf{join $x$ and $y$}. The space $T$ is
\textbf{uniquely arcwise connected} if there is a unique arc joining any pair
of distinct points. If $T$ is uniquely arcwise connected, we write $[x,y]$ for
the arc joining a pair of distinct points $x,y \in T$. We will have occasion to
abuse this terminology by allowing $x = y$, and saying that $[x,x] = \{x\}$ is
an arc joining $x$ to itself.

\begin{remark}
  A subset $X$ of a Hausdorff space $T$ is path-connected if and only if it is
  arcwise connected \cite[Cor.~31.6]{willard_topology}.
\end{remark}

\begin{proposition}
  \label{prop:closest_point}
  Let $T$ be a Hausdorff and uniquely arcwise connected space.  Let $y \in T$,
  and let $X \subset T$ be a nonempty path-connected closed subset. There is a
  unique point $c \in X$ with the following property: for any $x \in X$, the
  arc $[y,x]$ contains $c$. Moreover, if $y \not\in X$, then $c$ lies in the
  boundary of $X$.
\end{proposition}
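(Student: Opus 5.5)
The plan is to handle the trivial case $y \in X$ directly, and otherwise to construct $c$ as the first point of $X$ met along an arc from $y$ into $X$. Two facts do most of the work: uniqueness of arcs, and the fact that $X$ contains the arc between any two of its points.

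\emph{Case $y \in X$.} Take $c = y$. Every arc $[y,x]$ contains $y$, so $c$ has the property. For uniqueness, any $c'$ with the property lies on $[y,y] = \{y\}$, so $c' = y$.

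\emph{Case $y \notin X$: construction of $c$.} Fix $x_0 \in X$ and a parametrization $s\colon [0,1] \to [y,x_0]$ with $s(0) = y$ and $s(1) = x_0$.
\begin{itemize}
\item The set $\{t \in [0,1] : s(t) \in X\}$ is closed, since $X$ is closed and $s$ is continuous. It is nonempty, since it contains $1$.
\item Let $t_0$ be its minimum and put $c = s(t_0) \in X$. Then $t_0 > 0$ because $y \notin X$.
\item By construction, the sub-arc $[y,c] = s([0,t_0])$ meets $X$ only in the point $c$.
\end{itemize}

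\emph{$c$ lies on every arc $[y,x]$ with $x \in X$.}
\begin{itemize}
\item First, $[c,x] \subset X$. Since $X$ is path-connected, it is arcwise connected by the remark above, so there is an arc inside $X$ joining $c$ and $x$. By uniqueness of arcs in $T$, this arc is $[c,x]$. (If $x = c$ the claim is trivial.)
\item Hence $[y,c] \cap [c,x] = \{c\}$, because $[y,c]$ meets $X$ only at $c$.
\item Concatenate parametrizations of $[y,c]$ and $[c,x]$. This gives a continuous map $[0,1] \to T$. It is injective because the two arcs share only the common endpoint $c$. Its image is therefore an arc joining $y$ and $x$.
\item By uniqueness of arcs, this image equals $[y,x]$, so $c \in [y,x]$.
\end{itemize}
The one point needing care is that the glued map is continuous and injective. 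This is immediate from the pasting lemma together with the intersection computation above; Hausdorffness ensures that an injective continuous image of $[0,1]$ is homeomorphic to it, so it is an arc in the required sense.

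\emph{Uniqueness of $c$.} Suppose $c' \in X$ also has the property. Applying it with $x = c$ gives $c' \in [y,c]$. But $[y,c] \cap X = \{c\}$, so $c' = c$.

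\emph{$c$ lies in the boundary of $X$.} We have $c \in X$. Let $U$ be any neighborhood of $c$. By continuity of $s$ at $t_0$, $U$ contains $s(t)$ for some $t < t_0$, which is possible because $t_0 > 0$. Such a point $s(t)$ is not in $X$ by minimality of $t_0$. So every neighborhood of $c$ meets the complement of $X$, and $c \in \partial X$.

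I expect the only real obstacle to be the concatenation step, namely checking injectivity of the glued path. This reduces entirely to the observation $[y,c] \cap X = \{c\}$, which comes from choosing $c$ as the first point of $X$ met along $[y,x_0]$.
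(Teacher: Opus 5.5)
Your proof is correct and takes essentially the same route as the paper: $c$ is the first point of $X$ on an arc from $y$, uniqueness comes from $[y,c] \cap X = \{c\}$, and the boundary claim comes from points of $[y,c)$ accumulating at $c$. The only difference is cosmetic. You concatenate $[y,c]$ with $[c,x] \subset X$ to identify $[y,x]$ directly, whereas the paper argues by contradiction: it concatenates a second arc up to its own first hit point $c'$ with an arc in $X$ from $c'$ to $c$, which contradicts $[y,c] \cap X = \{c\}$.
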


In the setting of the proposition, we will say $c$ is \textbf{the point of $X$
  that is closest to $y$}.


\begin{proof}[Proof of Proposition~\ref{prop:closest_point}]
  Suppose first that $y \in X$. Set $c = y$. Evidently, every arc $[y,x]$
  contains $c$, and taking $x = y$ shows uniqueness.

  Now suppose that $y \not\in X$. Let $x \in X$, and let $s : [0,1] \to T$ be a
  continuous injection with $s(0) = y$ and $s(1) = x$. As $s^{-1}(X)$ is
  closed, and hence compact, there exists an element
  \[
  m_s := \min\left(s^{-1}(X)\right) \in [0,1].
  \]
  Note $m_s \ne 0$ since $y \not\in X$. Define $c = s(m_s)$. By construction,
  $c \in [y,x] = \im(s)$. 

  We now argue that $c$ lies on every arc from $y$ to a point of $X$. Let $x'
  \in X$, and let $t : [0,1] \to T$ be a continuous injection with $t(0) = y$
  and $t(1) = x'$. Define
  \[
     m_t := \min\left(t^{-1}(X)\right),
  \]
  and set $c' = t(m_t)$. For the sake of a contradiction, assume that $c \ne
  c'$. Since $X$ is path-connected, there is a continuous injection $u : [0,1]
  \to X$ with $u(0) = c'$ and $u(1) = c$. Define $v : [0,1] \to T$ by
  \[
  v(a) = \begin{cases}
    t(2 m_t \, a) & \text{ if } 0 \le a < 1/2 \\
    u(1 + 2(a-1)) & \text{ if } 1/2 \le a \le 1 \\
  \end{cases}
  \]
  By construction, $v$ is continuous. It is injective since each piece of it is
  injective, and since the image of the first part lies outside $X$ while the
  image of the second half lies entirely inside $X$. The image of $v$ is the
  arc $[y,c]$, and it contains the arc $[c,c']$. In particular, $[y,c]$
  contains at least two points of $X$: $c$ and $c'$. The image of
  $s|_{[0,m_s]}$ is also the arc $[y,c]$, but the only point of $X$ in this
  image is $c$. As $T$ is uniquely arcwise connected, we have arrived at a
  contradiction. Thus, we are forced to conclude that $c = c'$, and $c$ lies on
  every arc from $y$ to a point of $X$.

  Note that the image of the map $s|_{[0,m_s]}$ is the arc $[y,c]$, and $X \cap
  [y,c] = \{c\}$. This means $c$ is the unique point that lies on every arc
  from $y$ to a point of $X$.
  
  Let $U$ be a neighborhood of $c$. The inverse image of $U$ under the map
  $s|_{[0,m_s]}$ is an open subset of $[0, m_s]$ containing $m_s$. As $U$ was
  chosen arbitrarily, every neighborhood of $c$ contains points of the
  half-open arc $[y,c)$, which lies outside $X$. It follows that $c$ lies in
    the boundary of $X$.
\end{proof}

The projective line $\BerkK$ is uniquely arcwise connected
\cite[Lem.~2.10]{Baker-Rumely_BerkBook_2010} and Hausdorff
\cite[Prop.~2.6]{Baker-Rumely_BerkBook_2010}. It follows that
Proposition~\ref{prop:closest_point} applies to path-connected closed subsets
of $\BerkK$; we will use this observation without comment in the remaining
sections.

We will also conflate the notion of ``component'' with ``path component'' for
subsets of $\BerkK$. This is justified by
\cite[Prop.~3.9]{Rivera-Letelier_Espace_Hyperbolique_2003} and the fact that a
subset of $\BerkK$ is (arcwise) connected for the weak topology if and only if
it is (arcwise) connected for the strong topology \cite[Lem.~B.18,
  Cor.~B.21]{Baker-Rumely_BerkBook_2010}.


\subsection{Connected components of \texorpdfstring{$\Fix(f)$}{the fixed locus}}
\label{sec:components}

Recall  that the fixed locus of a rational function $f \in
K(z)$ is defined to be the set
\[
  \Fix(f) = \{x \in \BerkK : f(x) = x\}.
  \]

\begin{proposition}
  Let $f \in K(z)$ be a nonconstant rational function. The fixed locus is a
  closed set, and each connected component of it is closed. 
\end{proposition}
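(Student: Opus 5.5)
The plan is to obtain closedness of $\Fix(f)$ from continuity, and closedness of each component from a general topological fact, without any dynamics.

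First, $\Fix(f)$ is closed. The map $f\colon \BerkK \to \BerkK$ is continuous for the weak topology, and $\BerkK$ is Hausdorff \cite[Prop.~2.6]{Baker-Rumely_BerkBook_2010}. So the diagonal $\Delta \subset \BerkK \times \BerkK$ is closed. The map $x \mapsto (x, f(x))$ is continuous, and $\Fix(f)$ is the preimage of $\Delta$ under it, so $\Fix(f)$ is closed. One could also argue directly: if $f(x) \ne x$, choose disjoint open sets $U \ni x$ and $V \ni f(x)$. Then $U \cap f^{-1}(V)$ is an open neighborhood of $x$ containing no fixed point.

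Second, each component is closed. This holds in any topological space: a connected component $X$ of a space $Y$ is closed in $Y$, because the closure of a connected set is connected, so $\overline{X}\cap Y$ is a connected subset of $Y$ containing $X$ and must equal $X$ by maximality. Taking $Y = \Fix(f)$, each component is closed in $\Fix(f)$. Since $\Fix(f)$ is closed in $\BerkK$, each component is closed in $\BerkK$.

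The only subtlety is the earlier convention of conflating components with path components. For path components, closedness is not automatic. I would invoke the cited fact \cite[Prop.~3.9]{Rivera-Letelier_Espace_Hyperbolique_2003}, together with \cite[Lem.~B.18, Cor.~B.21]{Baker-Rumely_BerkBook_2010}, that connected subsets of $\BerkK$ are arcwise connected. Then the connected component and the path component of $\Fix(f)$ through a point coincide, and the argument above applies. This identification is the step I would check most carefully, though it is just a citation here.
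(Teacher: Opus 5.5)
Your proof is correct and follows the paper's route. The paper also shows $\Fix(f)$ is closed as the preimage of the closed diagonal under the graph map $x \mapsto (x,f(x))$, and gets closedness of components from the general fact that components of any space are closed (citing Willard, Thm.~26.12), exactly as you do.
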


\begin{proof}
  Observe that the diagonal $\Delta \subset \BerkK \times \BerkK$ is closed
  since $\BerkK$ is Hausdorff. The pre-image of the diagonal under the graph
  $\Gamma_f : \BerkK \to \BerkK \times \BerkK$ is the fixed locus. This proves
  the first statement. The second statement follows from the first and the fact
  that connected components are closed \cite[Thm.~26.12]{willard_topology}.
\end{proof}

Let $x$ be a classical (type I) fixed point.  The \textbf{fixed-point
  multiplier} (or multiplier) of $x$ is $\lambda_x := Df(x) \in K^\times$. We
say that $x$ is attracting, repelling, or indifferent depending on whether
$|\lambda_x|$ is less than, greater than, or equal to~1, respectively. A
non-classical fixed point $x$ is repelling or indifferent depending on whether
the local degree $\deg_f(x)$ is greater than or equal to 1, respectively. Every
type~III or type~IV fixed point of $f$ is indifferent
\cite[Lem.~10.80]{Baker-Rumely_BerkBook_2010}.

\begin{lemma}
  \label{lem:type_I}
  Let $f \in K(z)$ be a nonconstant rational function. If $x$ is a classical
  fixed point of $f$, then $x$ is isolated in $\Fix(f)$ if and only if $x$ is
  attracting or repelling.
\end{lemma}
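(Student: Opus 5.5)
Conjugating by a Möbius transformation (which acts on $\BerkK$ as a homeomorphism commuting with the dynamics), we move the classical fixed point to $x = 0$ and write
\[
f(z) = \lambda z + a_2 z^2 + \cdots
\]
as a power series converging on some closed disk $\bar D(0,r)$ with $r>0$, where $\lambda = \lambda_x$. After shrinking $r$ we may assume $|a_n| r^{n-1} \le |\lambda|$ for all $n \ge 2$ when $\lambda \ne 0$. If $\lambda = 0$ we use instead the leading term $a_m z^m$ with $m \ge 2$ and assume $|a_n| r^{n-m} \le |a_m|$. The fixed points near $0$ are governed by the type~II points $\zeta_{0,s}$ with $0<s\le r$. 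A neighborhood basis of $0$ in $\BerkK$ is given by the open Berkovich disks $D(0,s)^-$. Moreover, every point of such a disk lies in some closed disk $\bar D(a,t)$, whose image is the disk $\bar D(f(a), \cdot)$ computed from the Weierstrass data.

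\textbf{The attracting and repelling case.} When $|\lambda|<1$ (including $\lambda=0$), $f$ maps $\bar D(a,t) \subset D(0,s)^-$ onto a disk of radius $|\lambda| t$ (resp. $|a_m| t^m$-ish, or more precisely at most $\max(|\lambda|, |a_2|s,\dots)\, t < t$) for all small $s$. So no disk of positive radius near $0$ is fixed, and hence no type II, III or IV point is fixed. Limits of nested disks (type~IV points) are handled in the same way, because the radius strictly shrinks. Classical fixed points near $0$ are isolated, since they are zeros of $f(z)-z$, which is a nonzero analytic function. So $0$ is isolated in $\Fix(f)$.

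The repelling case $|\lambda|>1$ is symmetric. Near $0$, $f$ is bijective from $D(0,s)^-$ onto $D(0,|\lambda|s)^-$ and scales every disk radius by $|\lambda|$. So again only classical fixed points are possible, and these are isolated. Alternatively, apply the attracting argument to the local inverse $f^{-1}$, since $\Fix(f) = \Fix(f^{-1})$ locally.

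\textbf{The indifferent case.} When $|\lambda| = 1$, choose $s$ small so that $|a_n| s^{n-1} < 1$ for all $n\ge 2$. Then $f$ is an isometric bijection of $\bar D(0,s)$, and $f(\bar D(0,t)) = \bar D(0,t)$ for every $t \le s$, because $|f(z)| = |z|$ there. Hence every point $\zeta_{0,t}$ with $0<t\le s$ is fixed. So the arc $(0,\zeta_{0,s}]$ lies in $\Fix(f)$ and accumulates at $0$, and $0$ is not isolated.

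The main care point is the non-isolation claim in the attracting and repelling cases for non-classical points: we must verify that every non-classical point in a small neighborhood of $0$ corresponds to a disk (or nested sequence of disks) on which $f$ strictly changes the radius. This reduces to the uniform radius estimate from the Newton polygon of $f(z+a)-f(a)$ for $a$ in $D(0,s)$. That estimate is the step I would check carefully, using that the linear coefficient $f'(a)$ satisfies $|f'(a)| = |\lambda|$ and that the higher coefficients are uniformly small.
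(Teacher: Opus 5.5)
Your proof is correct, but the isolation half uses a different invariant from the paper's.

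\textbf{The paper's route.} After conjugating $x$ to $0$, the paper only compares the modulus $|f(y)|$ with $|y|$. For $|y| = \varepsilon$ small, $|f(y)| = |\lambda|\varepsilon$, and $|f(y)| < |y|$ when $\lambda = 0$. The paper states this only for $y \in K$, but it extends in one line. At a Berkovich point $\zeta$ near $0$, let $|\zeta|$ be the value of the seminorm $\zeta$ on the coordinate $z$. Then $f = \lambda z\,(1 + O(z))$ and multiplicativity give $|f(\zeta)| = |\lambda|\,|\zeta|$. Since $|\zeta| = 0$ only for $\zeta = 0$, this rules out fixed points of all four types at once, using only the expansion at the center.

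\textbf{Your route.} You use the diameter instead, which costs three things:
\begin{enumerate}
  \item The diameter is blind on type~I points, so you need the separate argument about the zeros of $f(z) - z$.
  \item You must re-expand $f(a+w) - f(a) = \sum_{n \ge 1} b_n w^n$ at every center $a \in D(0,s)$ with uniform control on the coefficients.
  \item Type~IV points need a limiting argument.
\end{enumerate}
The estimate you flag as the main care point does close. Write $b_n = \sum_{k \ge n} \binom{k}{n} a_k a^{k-n}$ and use $|a_k| r^{k-1} \le |\lambda|$. For $|a|, t < r$ this gives $|b_1| = |\lambda|$ and $|b_n| t^n \le |\lambda|\, t\,(t/r)^{n-1}$. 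Hence $f$ maps $\bar D(a,t)$ onto a disk of radius exactly $|\lambda| t$. When $\lambda = 0$, your uniform bound $\max_{k \ge 2} |a_k| s^{k-1}\cdot t$ works the same way.

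\textbf{Comparison.} Your route gives a finer conclusion: near a classical fixed point with $\lambda \neq 0$, $f$ multiplies the diameter of every nearby point by exactly $|\lambda|$. The paper's route is shorter and treats all point types uniformly. The indifferent direction ($\zeta_{0,\varepsilon}$ is fixed for small $\varepsilon$) is the same in both.

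Two small slips. In your last paragraph you mean the \emph{isolation} claim, not the non-isolation claim. Also, $|f'(a)| = |\lambda|$ is only the relevant fact when $\lambda \neq 0$.
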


\begin{proof}
  Conjugate $f$ so that $x = 0$. Near $0$, we have
  \begin{equation}
    \label{eq:phi_series}
    f(z) = \lambda z + a_2 z^2 + a_3 z^3 + \cdots. 
  \end{equation}

  Suppose that $\lambda = 0$. Let $\ell \ge 2$ be the minimum index such that
  $a_\ell \ne 0$. Let $y \in K$, and write $\varepsilon = |y|$. For
  $\varepsilon$ sufficiently small, we have
  \[
    |f(y)| = |a_\ell| \varepsilon^\ell < \frac{1}{2} |y|^{\ell-1} < |y|.
  \]
  It follows that $f^n(\zeta) \to 0$ for every $\zeta$ in a neighborhood of
  $0$, which means $0$ is an isolated fixed point.

  Now suppose that $|\lambda| > 0$ and let $|y| = \varepsilon > 0$. For
  $\varepsilon$ sufficiently small, we have
  \begin{equation}
    \label{eq:mult_scaling}
      |f(y)| = |\lambda| \varepsilon. 
  \end{equation}
  If $|\lambda| \ne 1$, we see that $|f(y)| \ne |y|$. In particular, this means
  there is a neighborhood of $0$ in $\BerkK$ in which $0$ is the only fixed point. This
  concludes the proof that $x$ is isolated if it is attracting or repelling. If
  instead $x$ is an indifferent fixed point --- i.e., $|\lambda| = 1$ --- then
  \eqref{eq:mult_scaling} shows that the type~II point $\zeta_{0,\varepsilon}$
  is fixed. Letting $\varepsilon \to 0$, we see that $0$ is not an isolated
  fixed point when $x$ is indifferent.
\end{proof}

We now justify the terminology introduced in Definition~\ref{def:component_types}. 

\begin{proposition}
  \label{prop:components}
  Let $f \in K(z)$ be a nonconstant rational function. Each component of
  $\Fix(f)$ is either classical, indifferent, or peaked. A classical component
  consists of an attracting or repelling fixed point.
\end{proposition}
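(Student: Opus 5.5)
Let $X$ be a connected component of $\Fix(f)$. We split into cases according to whether $X$ is a single point or contains more than one point.

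\textbf{Case 1: $X = \{x\}$ is a singleton.} First we rule out that $x$ is non-classical. Suppose it were. Then $f$ fixes $x$ and $x$ is isolated in $\Fix(f)$, and $x$ is either indifferent or repelling.
\begin{itemize}
\item If $x$ is of type~III or~IV, it is indifferent by \cite[Lem.~10.80]{Baker-Rumely_BerkBook_2010}, so $\deg_f(x)=1$. In this case there are only one or two directions at $x$, and we need to show that $x$ is not isolated.
\item If $x$ is a type~II repelling point, then $X$ is peaked by definition, so there is nothing to prove.
\item If $x$ is a type~II indifferent point, we need to show it is not isolated.
\end{itemize}
So the real claim is that a non-classical singleton component must be of type~II and repelling, which means $X$ is peaked. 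If instead $x$ is classical, Lemma~\ref{lem:type_I} says $x$ is isolated exactly when it is attracting or repelling. So $X$ is classical and consists of an attracting or repelling point, which is the last assertion of the proposition. (A classical indifferent point is not isolated, so it cannot form a singleton component.)

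\textbf{Case 2: $X$ has more than one point.} If $X$ contains a type~II repelling point, it is peaked. Otherwise every non-classical point of $X$ is indifferent. Any classical point of $X$ is non-isolated in $\Fix(f)$, because $X$ is connected and not a singleton. By Lemma~\ref{lem:type_I} such a classical point is indifferent. Hence every point of $X$ is indifferent, and $X$ is an indifferent component.

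\textbf{Exclusivity.} The three types are mutually exclusive:
\begin{itemize}
\item A classical component is a single attracting or repelling type~I point, so it is neither indifferent nor peaked.
\item An indifferent component contains no repelling point, so it is not peaked.
\end{itemize}

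\textbf{The main obstacle.} What remains is to show that a non-classical indifferent fixed point $x$ is never isolated in $\Fix(f)$. For the singleton case this is all that is needed: by Lemma~\ref{lem:type_I} the only other option for a singleton is a classical point, and that one is attracting or repelling.

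Let $x$ be non-classical and indifferent, so $\deg_f(x)=1$ and $f$ maps a small neighborhood of $x$ injectively onto its image. The argument depends on the type of $x$.

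\emph{Type~II.} Here $T_x f$ is induced by the reduction, which is a degree-one map (a Möbius transformation of $\PP^1(\tilde K)$). After conjugating so that $x=\zeta_{0,1}$, this reduction has a fixed direction $\vv$, since every Möbius transformation of $\PP^1(\tilde K)$ has a fixed point. Along a short segment in $\vv$ the map $f$ is an isometry for the path metric, because the local degree is $1$. It fixes the direction and fixes $x$, so it fixes a small initial segment $[x,y]$ pointwise. Hence $x$ is not isolated.

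\emph{Types~III and~IV.} The same reasoning works. A type~III point has two directions and a type~IV point has one. In either case $T_x f$ fixes a direction:
\begin{itemize}
\item at a type~IV point this is automatic;
\item at a type~III point, the directions cannot be swapped, since $f$ is locally an isometric homeomorphism fixing $x$ and preserving the ordering along the segment toward a classical point (I expect to cite \cite{Baker-Rumely_BerkBook_2010} here).
\end{itemize}
The local isometry then again fixes an initial segment pointwise.

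The step I expect to need care is justifying that $f$ acts isometrically and without twisting on a segment in a fixed direction when $\deg_f(x)=1$. I plan to use the fact that $f$ scales path distance by the local degree near $x$ along such segments.
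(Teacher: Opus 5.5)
Everything you actually need is already in your proposal, and it matches the paper's argument. The ``main obstacle,'' however, is self-imposed and unnecessary.

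By Definition~\ref{def:component_types}, a component is indifferent as soon as every one of its points is indifferent. So a singleton $\{x\}$ with $x$ a non-classical indifferent fixed point (type~II with $\deg_f(x)=1$, or any type~III/IV fixed point) is already an indifferent component; nothing requires it to be peaked. The paper's proof makes no singleton/non-singleton split:
\begin{itemize}
\item if $X$ contains an attracting or repelling classical point, Lemma~\ref{lem:type_I} makes $X$ that singleton;
\item if $X$ contains a type~II repelling point, it is peaked;
\item otherwise every type~I or~II point of $X$ is indifferent, type~III/IV fixed points always are, and $X$ is indifferent.
\end{itemize}
Your Case~2 together with the classical half of Case~1 is this argument. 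Deleting your last section leaves a complete proof.

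As written, though, you declare the stronger claim necessary and leave it partly unproved. The type~II case is fine; it also follows at once from Lemma~\ref{lem:second_indifference_lemma}, since the M\"obius map $T_xf$ has a fixed direction with nonzero multiplier.

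At a type~III point, your reason that $T_xf$ cannot swap the two directions --- that $f$ ``preserves the ordering along the segment'' --- is exactly what needs proof. A correct argument goes as follows.
\begin{itemize}
\item Conjugate so that $x=\zeta_{0,r}$ with $r\notin|K^\times|$.
\item On a thin annulus around $|z|=r$, $f$ has no zeros or poles. Hence $f=cz^m(1+h)$ with $|h|<1$ there.
\item Fixing $\zeta_{0,r}$ gives $|c|r^m=r$. Divisibility of $|K^\times|$ then forces $m=1$ and $|c|=1$.
\item It follows that $f(\zeta_{0,s})=\zeta_{0,s}$ for all $s$ near $r$, so a whole segment through $x$ is fixed.
\end{itemize}

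Finally, do not conflate ``$\{x\}$ is a component'' with ``$x$ is isolated''; these differ for Cantor-like sets. In Case~1 you infer ``$\{x\}$ is a component, so $x$ is isolated,'' and you also say ``not isolated, so not a singleton component''; both inferences are invalid in general. What the classical half of Case~1 really needs is that an indifferent classical fixed point has an \emph{arc} of fixed points emanating from it. The proof of Lemma~\ref{lem:type_I}, or equivalently Lemma~\ref{lem:first_indifference_lemma}, supplies exactly that.
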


\begin{proof}
  Let $X$ be a component of $\Fix(f)$. If $X$ contains an attracting or
  repelling classical fixed point, then $X = \{x\}$ is a classical component
  (Lemma~\ref{lem:type_I}). If $X$ contains a repelling type~II point, then it
  is a peaked component. We are left with the case where every type~I or~II
  point of $X$ is indifferent. Since every type~III or type~IV fixed point of
  $f$ is indifferent, we conclude that $X$ is an indifferent component.
\end{proof}

\begin{remark}
  \label{rem:higher_type_fixed}
  Every indifferent component contains a type~II fixed point by the proof of
  Lemma~\ref{lem:type_I}.
\end{remark}


\subsection{Surplus multiplicities and fixed points}
\label{sec:rumely}

The next result gives a local criterion for when a classical fixed point lies
in a particular direction from a type II fixed point.  To set up the result, we
need additional notation. Let $f \in K(z)$ be a nonconstant rational function,
let $x \in \BerkK$ be fixed by $f$, and let $\vv \in T_x$ be a direction.
\begin{itemize}
  \item Write $s_f(\vv)$ for the surplus multiplicity of $f$ in the direction
    $\vv$. (See \cite[\S3.3]{Faber_Berk_RamI_2013}.)
  \item Write $F_f(\vv)$ for the number of classical fixed points in the
    direction $\vv$.
  \item Write $\tilde F_f(\vv)$ for the fixed-point multiplicity of $\vv$ under
    the tangent map $T_x f$. (We set $\tilde F_f(\vv) = 0$ if $\vv$ is not a
    fixed direction.)
\end{itemize}

\begin{lemma}[First Identification Lemma {\cite[Lem.~2.1]{Rumely_new_equivariant}}]
  Let $f \in K(z)$ be nonconstant. Suppose $x \in \BerkK$ is a fixed point of
  type~II, and that $T_x f$ is not the identity. For $\vv \in T_x$, we
  have
  \[
     F_f(\vv) = s_f(\vv) + \tilde F_f(\vv).
     \]
  In particular, the open ball $B_x(\vv)^-$ contains a classical fixed point of
  $f$ if and only if $s_f(\vv) > 0$ or $\tilde F_f(\vv) > 0$.
\end{lemma}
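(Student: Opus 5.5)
We plan to prove the First Identification Lemma by changing coordinates and counting zeros of a single function on the ball $B_x(\vv)^-$.

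\textbf{Reduction.} Conjugate by a Möbius transformation so that $x = \zeta_{0,1}$ is the Gauss point. Since $x$ is fixed, $f$ then has a reduction $\tilde f$ of degree $\deg_f(x) \ge 1$, and $T_x f$ is identified with the action of $\tilde f$ on $\PP^1(\tilde K)$. Because a further conjugation by an element of $\mathrm{PGL}_2(\cO)$ permutes the directions, we may also assume $\vv \ne \vv_\infty$. Then $\vv$ is the direction of some $\tilde a \in \tilde K$, and $B_x(\vv)^- = \{|z - a| < 1\}$.

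\textbf{Key steps.}

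1. Write $f = g/h$ with $g, h \in \cO[z]$ coprime and normalized so that $\max(\|g\|,\|h\|) = 1$. Then $\tilde f = \tilde g/\tilde h$ after cancelling a common factor $\tilde c$. By the description of surplus multiplicity in \cite[\S3.3]{Faber_Berk_RamI_2013}, $s_f(\vv)$ is the multiplicity of $\tilde a$ as a root of $\tilde c$.

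2. The classical fixed points in $B_x(\vv)^-$ are the zeros, in the disk $|z-a|<1$, of
   \[
   P(z) = g(z) - z\,h(z) \in \cO[z].
   \]
   Here we use $\vv \ne \vv_\infty$, so that $\infty$ is not in the ball. Multiplicities match the fixed-point multiplicities of $f$.

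3. By Weierstrass preparation, or simply the Newton polygon, the number of zeros of $P$ in that residue disk, counted with multiplicity, equals the order of vanishing of $\tilde P$ at $\tilde a$. This holds provided $\tilde P \not\equiv 0$.

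4. We have
   \[
   \tilde P = \tilde g - z\tilde h = \tilde c\,(\tilde g_0 - z \tilde h_0),
   \]
   where $\tilde f = \tilde g_0/\tilde h_0$. The hypothesis that $T_x f$ is not the identity means $\tilde f(z) \ne z$. Hence $\tilde g_0 - z\tilde h_0 \ne 0$, and so $\tilde P \ne 0$.

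5. Therefore
   \[
   \operatorname{ord}_{\tilde a}\tilde P = \operatorname{ord}_{\tilde a}\tilde c + \operatorname{ord}_{\tilde a}(\tilde g_0 - z\tilde h_0).
   \]
   The second term is exactly the fixed-point multiplicity $\tilde F_f(\vv)$ of $\tilde a$ under $\tilde f$; it is zero when $\tilde a$ is not fixed. This gives $F_f(\vv) = s_f(\vv) + \tilde F_f(\vv)$. The "in particular" clause follows immediately, since both terms are nonnegative.

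\textbf{Main obstacle.} The delicate point is step 1: matching the paper's definition of the surplus multiplicity $s_f(\vv)$ with the multiplicity of $\tilde a$ in the common factor $\tilde c$. A second subtlety is the multiplicity bookkeeping in step 5. The fixed-point multiplicity of $\tilde a$ must be computed from the reduced map $\tilde g_0/\tilde h_0$, with the ordinary fixed-point multiplicity convention; we should also check that $\tilde h_0(\tilde a)\neq 0$ or handle poles consistently. I would first cite the standard characterization of $s_f(\vv)$ via the common factor, as in Rumely's and Faber's setup. If needed, I would instead argue directly: for generic $\tilde b$, the count of solutions of $f(z)=b$ in the ball minus $\deg T_xf$ at $\vv$ equals $\operatorname{ord}_{\tilde a}\tilde c$, again by the same Newton polygon count applied to $g - b h$.
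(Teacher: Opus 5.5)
Your proof is correct. There is nothing in the paper to compare it with: the paper does not prove this lemma, it quotes it from Rumely \cite[Lem.~2.1]{Rumely_new_equivariant}.

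Your argument is the natural direct one and is self-contained. You move $x$ to the Gauss point and factor the reduction of the fixed-point polynomial as $\tilde c\,(\tilde g_0 - z\tilde h_0)$. The hypothesis $T_x f \neq \mathrm{id}$ is used exactly once, to guarantee that this reduction is nonzero so the Newton polygon count in the residue disk applies. The two factors then contribute $s_f(\vv)$ and $\tilde F_f(\vv)$. Step~1 is the reduction description of surplus multiplicity that the paper itself points to \cite[Lem.~3.17]{Faber_Berk_RamI_2013}, so the proof fits the paper's toolkit.

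The ``obstacles'' you flag are not real; coprimality settles them. If $\tilde h_0(\tilde a) = 0$, then $\tilde g_0(\tilde a) \neq 0$, so $\tilde a$ is neither a root of $\tilde g_0 - z\tilde h_0$ nor a fixed point of $\tilde f$. If $\tilde h_0(\tilde a) \neq 0$, the orders of $\tilde g_0 - z\tilde h_0$ and $\tilde f(z) - z$ at $\tilde a$ agree. The same reasoning with $g,h$ over $K$ justifies Step~2.

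There is one slip in your fallback derivation of Step~1. If $\tilde b \neq \tilde f(\tilde a)$, then $b$ lies outside $B_x(T_x f(\vv))^-$. The Newton polygon count for $g - bh$ then shows directly that $b$ has $\mathrm{ord}_{\tilde a}\tilde c$ preimages in the ball, and this number is $s_f(\vv)$ itself, with nothing subtracted. Subtracting $\deg_f(x,\vv)$ is only correct when $\tilde b = \tilde f(\tilde a)$.

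Finally, you could use the homogeneous fixed-point form $XH(X,Y) - YG(X,Y)$ instead. That treats the direction toward $\infty$ on the same footing as the others and removes the extra $\mathrm{PGL}_2(\cO)$ conjugation. The conjugation is harmless anyway, since all three quantities in the lemma are coordinate-invariant.
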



The simplest method for computing the surplus multiplicities at a type~II point
$\zeta$ is to change coordinate so that $\zeta = \zeta_{0,1}$, and then to look
at the factors that cancel from the numerator and denominator of $f$ upon
reducing the coefficients \cite[Lem.~3.17]{Faber_Berk_RamI_2013}. A second
approach is via the ``jump-number formula'', which we recall now.

\begin{jumpnumber}[{\cite[Prop.~3.18]{Faber_Berk_RamI_2013}}]
  Let $f \in K(z)$ be a nonconstant rational function. For a point $y \in
  \BerkK$ and a tangent vector $\ww \in T_y$, write $\deg_f(y,\ww)$ for the
  local degree of $T_y f$ in the direction $\ww$. If $y \in B_x(\vv)^-$, write
  $\ww_x$ for the unique tangent vector in $T_y$ that points back toward
  $x$. Then 
  \[
     s_f(\vv) = \sum_{y \in B_x(\vv)^-} \Big(\deg_f(y) -
     \deg_f(y,\ww_x)\Big).
     \]
\end{jumpnumber}

The nonzero contributions in the Jump Number Formula occur precisely at points
where the local degree increases, as viewed from the point $x$. 

\begin{remark}
The formula in \cite{Faber_Berk_RamI_2013} contains the summand
$\max\{\deg_f(y) - \deg_f(y,\ww_x), 0\}$, but the first author failed to
observe that the difference in this maximum is always nonnegative.
\end{remark}


\section{Indifference lemmas}
\label{sec:indifferent}

We now provide a finer understanding of the local structure of non-classical
components through a sequence of four lemmas. The first addresses the
structure near a classical indifferent fixed point.

\begin{lemma}[First Indifference Lemma]
    \label{lem:first_indifference_lemma}
  Let $f \in K(z)$ be a rational function. Suppose that $x$ is an indifferent
  classical fixed point of $f$ with multiplier $\lambda$. There is a point $x'$
  such that every $y \in (x,x')$ is fixed and indifferent, and if $y$ is of
  type~II, then $T_x f(z) = \tilde \lambda z$ in an appropriate coordinate.
\end{lemma}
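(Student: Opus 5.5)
Conjugate by a Möbius transformation so that $x = 0$, exactly as in the proof of Lemma~\ref{lem:type_I}, and expand $f$ near $0$ as in \eqref{eq:phi_series}:
\[
f(z) = \lambda z + a_2 z^2 + a_3 z^3 + \cdots, \qquad |\lambda| = 1.
\]
This power series converges on some closed disk $\bar D(0,r)$ with $r>0$. I will take $x' = \zeta_{0,\varepsilon_0}$ with $\varepsilon_0>0$ small, chosen so that for every $0<\varepsilon \le \varepsilon_0$ and every $n \ge 2$,
\[
|a_n|\varepsilon^{n} < \varepsilon .
\]
Such an $\varepsilon_0$ exists because the coefficients satisfy $|a_n| \le C R^{-n}$ for some $R>0$, so $|a_n|\varepsilon^{n-1} \le C R^{-1}(\varepsilon/R)^{n-2}\cdot \varepsilon/R \to 0$ uniformly in $n$ as $\varepsilon\to 0$. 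The open arc $(x,x')$ is then exactly the set of points $\zeta_{0,\varepsilon}$ with $0<\varepsilon<\varepsilon_0$.

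First I will show these points are fixed. Let $0<\varepsilon<\varepsilon_0$ and write $f(z) = \lambda z\,(1+g(z))$, where $g(z) = \sum_{n\ge2} \lambda^{-1} a_n z^{n-1}$. On $\bar D(0,\varepsilon)$ we have $|g| < 1$, so $f$ maps $\bar D(0,\varepsilon)$ onto $\bar D(0,\varepsilon)$. For instance, $f$ is given by a convergent series that is ``dominated'' by its linear term, so the Weierstrass degree of $f(z)-w$ on the disk is $1$ for every $|w| \le \varepsilon$. Hence $f(\zeta_{0,\varepsilon}) = \zeta_{0,\varepsilon}$. Equivalently, the sup-norm of $f$ on the disk is $\max_n |a_n|\varepsilon^n = \varepsilon$, attained only by the linear term.

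Second I will show indifference. Since $f$ maps $\bar D(0,\varepsilon)$ bijectively onto itself, $f$ is injective on a neighborhood of $\zeta_{0,\varepsilon}$, so $\deg_f(\zeta_{0,\varepsilon}) = 1$. Alternatively, the reduction computation below shows the tangent map has degree $1$. Points of type~III are indifferent automatically, so only the type~II points need this argument.

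Third I will identify the tangent map. When $\varepsilon\in|K^\times|$, choose $c\in K$ with $|c|=\varepsilon$ and rescale by $z\mapsto cz$, which moves $\zeta_{0,\varepsilon}$ to $\zeta_{0,1}$. The conjugate is
\[
c^{-1}f(cz) = \lambda z + \sum_{n\ge2} a_n c^{n-1} z^n ,
\]
and every higher coefficient satisfies $|a_n c^{n-1}| = |a_n|\varepsilon^{n-1} < 1$. Reducing gives $\tilde\lambda z$, which is the claimed form of the tangent map $T_y f$ in this coordinate, with $\tilde\lambda\neq 0$.

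The main obstacle is that $f$ is a rational function, not a power series. The argument therefore requires that the action of $f$ on $\zeta_{0,\varepsilon}$, and its reduction there, be computed from the local power series on a disk where $f$ has no poles. I would handle this by shrinking $\varepsilon_0$ below the distance from $0$ to the nearest pole. On that disk $f$ is an analytic function, and both the image of the type~II point and the reduction are determined by the supremum norm. The other subtle point is the strict inequality $|a_n|\varepsilon^{n-1}<1$ for all $n$ at once. This is where the uniform choice of $\varepsilon_0$ matters: the nonstrict form would leave extra terms in the reduction.
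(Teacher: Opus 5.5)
Your proof is correct and follows essentially the paper's route: conjugate $x$ to $0$, rescale to $u^{-1}f(uz)$ with $|u|$ small so that the linear term strictly dominates every higher coefficient, and read off the reduction $\tilde\lambda z$. The only difference is that the paper works from the decomposition $f(z)=\lambda z+z^m g(z)/h(z)$ rather than the Taylor coefficients, and your separate sup-norm/Weierstrass-degree argument for fixedness makes explicit what the paper infers from the nonconstant reduction, covering non-type-II radii as well.
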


\begin{proof}
  If necessary, we make a change of coordinate in order to assume that $x =
  0$. The function $f(z) - \lambda z$ has a zero of order $m \ge 2$ at the
  origin, so we have
  \[
     f(z) = \lambda z + z^m \frac{g(z)}{h(z)}
  \]
  for some polynomials $g, h \in K[z]$ with $g(0)h(0) \ne 0$. For $u \in
  K^\times$, define
  \[
     f_u(z) = u^{-1}f(uz) = \lambda z + u^{m-1}z^{m} \frac{g(uz)}{h(uz)}.
  \]
  If $|u|$ is sufficiently small, then all of the nonconstant coefficients of
  $g(uz)$ and $h(uz)$ are integral and $|u|^{m-1} g(0)/h(0) < 1$. Moreover, the
  coefficient of $z^{m}$, which is $u^{m-1} g(0)/h(0)$, dominates all of the
  coefficients of higher degree terms in the series expansion of $f_{u}$. For
  such $u$, it follows that $\tilde f_u(z) = \tilde \lambda z$. This means $f$
  fixes the point $x = \zeta_{0,\varepsilon}$ for all sufficiently small positive
  numbers $\varepsilon$, and the tangent map at a type II point of this form is
  $T_x f(z) = \tilde \lambda z$ in the coordinate specified by $f_u$.
\end{proof}

The Second Indifference Lemma addresses the local structure in a non-critical
fixed direction at a type II fixed point.

\begin{lemma}[Second Indifference Lemma]
    \label{lem:second_indifference_lemma}
  Let $f \in K(z)$ be a rational function. Let $x$ be a type~II fixed
  point for $f$, and let $\vv$ be a fixed direction for $T_x f$ with multiplier
  $\tilde \lambda \ne 0$. There is a fixed point
  $x' \in B_x(\vv)^-$ such that every $y \in (x,x')$ is fixed and indifferent,
  and if $y$ is of type~II, then in an appropriate coordinate $T_y f(z) =
  \tilde \lambda z$.
\end{lemma}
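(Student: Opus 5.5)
Conjugate so that $x = \zeta_{0,1}$ and $\vv$ is the direction containing $0$. Because $\vv$ is fixed by $T_x f$, the reduction $\tilde f$ fixes $0$. Because its multiplier is $\tilde\lambda \neq 0$, the direction $\vv$ is not critical, so $\deg_f(x,\vv) = 1$. The approach is to imitate the proof of the First Indifference Lemma (Lemma~\ref{lem:first_indifference_lemma}), but with rescalings $f_u(z) = u^{-1} f(uz)$ for $|u|<1$ close to $1$, i.e.\ along the segment $(\zeta_{0,1}, 0)$, instead of $|u|$ small.

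\textbf{Step 1: control the ball.} Write $f = g/h$ with $g,h\in\cO[z]$ normalized. The reduction satisfies $\tilde f(z) = \tilde\lambda z + O(z^2)$ near $0$ after the common factors of $\tilde g$ and $\tilde h$ are cancelled. Those cancelled factors give the surplus $s_f(\vv)$; they correspond to zeros of $g$ and $h$ (hence poles of $f$, or preimages of the direction $\infty$) inside $B_x(\vv)^-$. Only finitely many such zeros and poles lie in the open unit disk. Choose $r<1$ so that the annulus $\{r < |z| < 1\}$ contains none of them. On this annulus, with respect to the affinoid $\{r\le |z|\le 1\}$ minus what lies inside, $f$ has a convergent Laurent-type expansion whose dominant term on each circle is determined by the reduction. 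Equivalently, use the Jump Number Formula: the local degree does not increase along $[x, \zeta_{0,r}]$ as seen from $x$, because no points of $B_x(\vv)^-$ contribute surplus in that region. Hence $\deg_f$ stays equal to $1$ on $[x,\zeta_{0,r}]$ near $x$.

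\textbf{Step 2: show the points are fixed.} For $|u|\in(r,1)$, write $f(z) = a_0 + a_1 z + \cdots$ on the relevant annulus, or as a Laurent series with $|a_1| = 1$, $\tilde a_1 = \tilde\lambda$, $|a_0|<1$, and $|a_k|\le 1$. Also $|a_{-k}|$ is small, bounded by $r^k$ times a constant. For $|u|$ close enough to $1$, the term $a_1 uz$ dominates every other term on $|z| = |u|$. In particular $|a_0| < |u|$ is needed, which holds once $|u| > |a_0|$. This gives $f(\zeta_{0,|u|}) = \zeta_{0,|u|}$ and reduction $\tilde f_u(z) = \tilde\lambda z$, i.e.\ $T_y f(z) = \tilde\lambda z$ in the coordinate $uz$. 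Then set $x' = \zeta_{0,\rho}$ for suitable $\rho<1$.

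\textbf{Step 3: non-type-II points.} Points of type~III on $(x,x')$ are fixed by the same norm computation, since $|f(z)| = |z|$ for $|z|\in(\rho,1)$, and $f$ moves the disk $D(0,t)$ to itself. They are indifferent because the local degree there is $1$, matching the degree $1$ of the dominant monomial. Type~IV points do not lie on this segment, so nothing needs to be checked for them.

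The main obstacle is Step~2. One has to justify a series expansion of $f$ on an annulus near the boundary of $B_x(\vv)^-$ when $s_f(\vv)>0$, since then $f$ may have poles inside the ball. It must be checked that the linear term still dominates, including over the constant $a_0$ and the negative-power terms. A cleaner route is to first move a classical fixed point into $B_x(\vv)^-$ to $0$, if one exists. This is possible by the First Identification Lemma, since $\tilde F_f(\vv)\ge 1$; the degenerate case where $T_x f$ is the identity needs to be handled separately. With $0$ fixed, $a_0 = 0$. Even so, the Laurent-dominance estimate is the place that needs care.
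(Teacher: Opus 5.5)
\textbf{Verdict: genuine gap.} Your overall plan matches the paper's: rescale by $f_u(z)=u^{-1}f(uz)$ with $|u|\to 1^-$ and show $\tilde f_u(z)=\tilde\lambda z$. However, Step~2 is the whole content of the lemma, and you assert it rather than prove it. Some of what you assert there is also too weak or false.

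The Laurent expansion of $f$ on $\{r<|z|<1\}$ need not converge on $|z|=1$, since $f$ may have poles in other residue classes. So you cannot read its coefficients off from $\tilde f$. When $s=s_f(\vv)>0$ you must split off the $s$ poles $\beta_1,\dots,\beta_s$ of $f$ in $D(0,r]$ and expand, and this is exactly the step you defer.

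Your bound on the negative coefficients does not give dominance either. On $|z|=t$ you need $|a_{-k}|t^{-k}<t$ for every $k\ge 1$. A bound $|a_{-k}|\le Cr^k$ gives this only if $Cr<t^2$. But the constant produced by expanding $z^{-s}\prod_i(1-\beta_i/z)^{-1}$ is of size $r^{-s}$, so $Cr\ge 1$ as soon as $s\ge 1$. What is really needed is that the first $s$ negative coefficients lie in $\mm$. That information comes from the low-order coefficients of the numerator and denominator, which your outline never examines.

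Two side claims are also false.
\begin{itemize}
\item If $f$ has a pole in the inner disk, the constant Laurent coefficient is not $f(0)$. For example, $f(z)=\lambda z+\pi z/(z-\beta)$ with $|\lambda|=1$ and $0<|\pi|,|\beta|<1$ has $f(0)=0$ but $a_0=\pi$. So moving a classical fixed point to $0$ does not give $a_0=0$.
\item In Step~3, $f$ does not map $D(0,t)$ into itself when that disk contains a pole. Type~III points on the segment are better handled by closedness of $\Fix(f)$ together with density of type~II points, and they are automatically indifferent.
\end{itemize}

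The idea that closes the gap, and the one the paper uses, is to avoid series entirely. Work with normalized polynomial coefficients $f=\sum a_iz^i/\sum b_jz^j$, where $a_i,b_j\in\cO$ (these $a_i$ are no longer Laurent coefficients).
\begin{itemize}
\item The surplus $s$ is the order of the common factor $z^s$ of the reduced numerator and denominator, so $|a_i|,|b_i|<1$ for $i<s$.
\item Because $\vv$ is fixed, $b_s\in\cO^\times$ and $|a_s|<1$.
\item Because $\tilde\lambda=\tilde a_{s+1}/\tilde b_s\neq 0$, you get $a_{s+1}\in\cO^\times$.
\end{itemize}
Scale so that $b_s=1$, and multiply the numerator and denominator of $u^{-1}f(uz)$ by $u^{-s}$. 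The coefficients become $a_iu^{i-s-1}$ and $b_ju^{j-s}$. Set $r_0=\max\{|a_i|^{1/(s+1-i)}\ (i\le s),\ |b_j|^{1/(s-j)}\ (j<s)\}<1$. For $r_0<|u|<1$, every coefficient is integral, and all except $a_{s+1}$ and $b_s=1$ lie in $\mm$. Hence $\tilde f_u(z)=\tilde\lambda z$.

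This is a finite check. It handles positive surplus and the case $T_xf=\mathrm{id}$ uniformly, and it needs neither the First Identification Lemma nor the Jump Number Formula.
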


\begin{proof}
  If necessary, we make a change of coordinate so that $x = \zeta_{0,1}$, $\vv
  = \vec{0}$, and $\vec{\infty}$ is not a bad direction (i.e., a direction
  $\vv$ with $s_f(\vv) > 0$). Write
  \[
    f = \frac{a_d z^d + \cdots + a_0}{b_d z^d + \cdots + b_0},
  \]
  where $a_i, b_i \in \cO_K$. After a suitable rescaling of the coefficients,
  we may assume there exists an index $i$ such that $a_i \in \cO_K^\times$ or
  $b_i \in \cO_K^\times$. Writing $s = s_f(\vec{0})$ for the surplus
  multiplicity for the direction $\vec{0}$, we find that $|a_i| < 1$ and $|b_i|
  < 1$ for $i = 0,\ldots, s-1$, and $a_s \in \cO_K^\times$ or $b_s \in
  \cO_K^\times$. In fact, since $\vec{0}$ is fixed, we see that $b_s \in
  \cO_K^\times$ and $|a_s| < 1$. The multiplier at $\vec{0}$ is $\tilde \lambda
  = \tilde a_{s+1} / \tilde b_s$, so our hypothesis implies $a_{s+1} \in
  \cO_K^\times$.  Without loss of generality, we may rescale the coefficients
  of $f$ by $b_s^{-1}$ in order to assume that $b_s = 1$ and $\tilde a_{s+1} =
  \tilde \lambda$.

  Let $u \in \cO_K$ be nonzero, and define
  \[
  g(z) = u^{-1}f(uz) = \frac{a_d u^{d-s-1} z^d + \cdots + a_{s+1}z^{s+1}
    + a_s u^{-1}z^s + \cdots + a_0 u^{-s-1}}
  {b_d u^{d-s} z^d + \cdots + z^s + b_{s-1} u^{-1} z^{s-1} + \cdots + b_0 u^{-s}}.
  \]
  Set
  \[
  r_0 = \max\left(
  \left\{ |a_i|^{\frac{1}{s+1-i}} : i = 0, \ldots, s \right\} \cup
  \left\{ |b_j|^{\frac{1}{s-j}} : j = 0, \ldots, s-1 \right\}
  \right) < 1.
  \]
  If we assume that $r_0 < |u| < 1$, then all coefficients of $g$ are integral,
  and
  \[
     \tilde g(z) = \tilde a_{s+1}z = \tilde \lambda z.
  \]
  Thus, for all $r_0 < r < 1$ we have $f(\zeta_{0,r}) = \zeta_{0,r}$, and
  $T_{\zeta_{0,r}}f(z) = \tilde \lambda z$ when $\zeta_{0,r}$ is of type II.
\end{proof}

Before proceeding further, we need Rumely's classification of indifferent fixed
points \cite{Rumely_new_equivariant}. Suppose that $x$ is a type~II indifferent
fixed point of a rational function $f \in K(z)$. Then
\begin{samepage}
\begin{itemize}
  \item $x$ is \textbf{id-indifferent} if $T_x f$ is the identity map;
  \item $x$ is \textbf{multiplicatively indifferent} if $T_x f(z) = c z$ in some
    coordinate, with $c \ne 1$; and
  \item $x$ is \textbf{additively indifferent} if $T_x f(z) = z + 1$ in some
    coordinate.
\end{itemize}
\end{samepage}
We can extend this classification to type~III and type~IV fixed points by
embedding $\BerkK \hookrightarrow \BerkL$, where $L$ is a spherically complete
extension of $K$ such that $|L^\times| = \RR$. Then $\BerkL$ has only type~I
and type~II points. The map $f$ extends uniquely to $\BerkL$, and the properties of
$T_x f$ at a type~II point do not change under extension to $L$.

The Third Indifference Lemma shows that the indifference type is locally
constant for the strong topology, as long as we stay away from the leaves
(endpoints) of a component.

\begin{lemma}[Third Indifference Lemma]
    \label{lem:third_indifference_lemma}
  \label{prop:locally_constant}
  Let $f \in K(z)$ be a nonconstant rational function, let $X$ be a
  component of the fixed locus of $f$, and let $x \in X$ be of type~II.
  \begin{itemize}
    \item If $x$ is multiplicatively indifferent, there is a strong
      neighborhood $V$ of $x$ such that every point of $V \cap X$ is
      multiplicatively indifferent.
    \item If $x$ is id-indifferent, there is a strong neighborhood $V$ of $x$
      such that every point of $V$ is id-indifferent.
  \end{itemize}
\end{lemma}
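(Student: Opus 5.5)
We work in a coordinate with $x = \zeta_{0,1}$ and treat the two cases separately. To reach type~III and type~IV points, we pass to a spherically complete extension $L$ with $|L^\times| = \RR$, as explained before the statement. Over $L$ every point near $x$ is of type~I or~II, and the indifference type of a point can be read off from its tangent map there.

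\emph{Multiplicative case.} Arrange that $T_x f(z) = \tilde c z$ with $\tilde c \ne 0, 1$; note $\tilde c \ne 0$ because $\deg_f(x) = 1$. The fixed directions of $T_x f$ are then exactly $\vec 0$ and $\vec\infty$. Take any direction $\vv \ne \vec 0, \vec\infty$. Then $f$ maps $B_x(\vv)^-$ into $B_x(T_x f(\vv))^-$, which is disjoint from $B_x(\vv)^-$, so $X \cap B_x(\vv)^- = \emptyset$.
\begin{enumerate}
  \item Apply the Second Indifference Lemma in the directions $\vec 0$ (multiplier $\tilde c$) and $\vec\infty$ (multiplier $\tilde c^{-1}$). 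This gives points $x_0', x_\infty'$ such that every type~II point of $(x, x_0')$ or $(x, x_\infty')$ is fixed with tangent map $\tilde c^{\pm1} z$, hence multiplicatively indifferent.
  \item Let $V$ be the open strong ball around $x$ whose radius is smaller than $\rho(x, x_0')$ and $\rho(x, x_\infty')$.
  \item Let $z \in V \cap X$. Since $X$ is closed and path-connected, the arc $[x,z]$ lies in $X$ (Proposition~\ref{prop:closest_point} and unique arcwise connectivity). This arc leaves $x$ in direction $\vec 0$ or $\vec\infty$, so it starts along one of the segments from step~1.
  \item Suppose $[x,z]$ eventually leaves that segment. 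The departure point $y$ is a branch point, hence of type~II (over $L$). At $y$ the arc $[x,z]$ would use a third fixed direction, but $T_y f = \tilde c^{\pm1} z$ has only two fixed directions, a contradiction.
  \item Hence $z$ lies on one of the segments, so $z$ is multiplicatively indifferent (after extending to $L$ if $z$ is of type~III or~IV).
\end{enumerate}

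\emph{Id-indifferent case.} This requires controlling all directions at once, not just points of $X$. Write $f = P/Q$ with $P, Q \in \cO[z]$ and coefficients normalized so that the maximum coefficient has absolute value~1. Since $T_x f$ is the identity, the reduction $\tilde P/\tilde Q$ equals $z$ after cancelling common factors. Hence $\tilde P = z\tilde Q$, and $E := P - zQ$ has all coefficients of absolute value at most some $\delta < 1$.
\begin{enumerate}
  \item Consider a point $y = \zeta_{a,r}$ with $|a| \le 1$ and $e^{-\varepsilon} \le r \le 1$. (The directions toward $\infty$ are handled by the symmetric argument after $z \mapsto 1/z$, which preserves $\tilde f = z$.)
  \item Substitute $z = a + uw$ with $|u| = r$, working over $L$ so that every $r$ is attained. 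Then
  \[
    u^{-1}\bigl(f(a+uw) - a\bigr) = w + \frac{E(a+uw)}{u\,Q(a+uw)}.
  \]
  \item Bound the error term in the Gauss norm in $w$. The numerator satisfies $\|E(a+uw)\| \le \delta$. For the denominator we need a \emph{lower} bound: $\|Q(a+uw)\| \ge r^{\deg Q}\,\|Q\| = r^{\deg Q}$, since rescaling by $u$ multiplies each coefficient by a power of $u$ and $|a| \le 1$ gives a unimodular translation.
  \item The error term therefore has Gauss norm at most $\delta\, r^{-d-1}$. This is $< 1$ once $\varepsilon$ is small enough that $e^{(d+1)\varepsilon} < \delta^{-1}$.
  \item Consequently the reduction at $y$ is $w$: $f(y) = y$ and $T_y f$ is the identity.
\end{enumerate}
The same estimate applies to every point in a small strong neighborhood of $x$ lying in any direction, because such points are all of the form considered in step~1 or its image under $z \mapsto 1/z$. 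This gives the required $V$, and type~III and type~IV points are covered by the extension to $L$.

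The main obstacle is the uniform lower bound on $|Q|$ in step~3 of the id-indifferent case. This lower bound is what allows us to absorb the surplus directions, where $P$ and $Q$ both have small values, into a single strong radius. If the crude bound $r^{\deg Q}$ turns out to be insufficient, I would instead use the factorization of $Q$ into linear factors and bound each factor separately on the closed ball.
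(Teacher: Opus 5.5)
Your argument is correct in substance, but it takes a different route from the paper. The paper normalizes $x=\zeta_{0,1}$ and $T_xf(z)=\tilde\lambda z$, then quotes Rivera-Letelier's Lemme d'Approximation. Since $f$ and $g(z)=\lambda z$ both fix the Gauss point and have the same reduction, they coincide on a strong neighborhood $V$ of $x$, so both cases reduce to the explicit fixed locus of a M\"obius map.

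Your id-indifferent computation is a self-contained proof of exactly the special case $g(z)=z$ of that lemma. The key estimate $\|Q(a+uw)\|\ge r^{\deg Q}$ is valid: $\tilde P=z\tilde Q$ together with the normalization forces $\tilde Q\ne 0$, so $\|Q(a+w)\|=1$ for $|a|\le 1$. This estimate already handles the bad directions uniformly, so your fallback via factoring $Q$ is unnecessary. As a bonus you get an explicit radius $\varepsilon$ with $e^{(d+1)\varepsilon}<\delta^{-1}$.

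In the multiplicative case you instead combine the Second Indifference Lemma along $\vec 0$ and $\vec\infty$ with a tree argument. A point where $[x,z]$ leaves one of those segments would be a type~II point with three fixed directions, which is impossible for $T_yf(w)=\tilde c^{\pm1}w$. This uses the connectedness of $X$ essentially, so it controls only $V\cap X$, which is all the statement asks for. The Lemme d'Approximation gives the stronger fact $V\cap\Fix(f)=V\cap[0,\infty]$. In exchange, you need no external input and no uniform estimate in the bad directions.

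One sentence needs repair. It is false in general that $f$ maps $B_x(\vv)^-$ into $B_x(T_xf(\vv))^-$. If $\vv$ is a bad direction, then $f(B_x(\vv)^-)=\BerkK$. A multiplicatively indifferent point can have non-fixed bad directions: by Rumely's First Identification Lemma, every non-fixed direction containing a classical fixed point is one.

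The conclusion $X\cap B_x(\vv)^-=\varnothing$ is still true, for the local reason you already use in step~4. If $z\in X\cap B_x(\vv)^-$, then $[x,z]\subset X$ is an arc of fixed points leaving $x$ in the direction $\vv$, which forces $T_xf(\vv)=\vv$.

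Finally, to cover type~III points on your segments, note that the proof of the Second Indifference Lemma produces an explicit $r_0$ depending only on the coefficients of $f$. So the same segment works after base change to $L$.
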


\begin{proof}
  One can deduce this result directly as in the proof of the Second
  Indifference Lemma. However, we will take a more conceptual tack by using
  Rivera-Letelier's Lemme d'Approximation
  \cite[\S10]{Rivera-Letelier_Periodic_Points_2005}: \textit{If $f, g \in K(z)$
    are two rational functions that fix the Gauss point and satisfy $\tilde f =
    \tilde g$, then there exists a strong neighborhood $V$ of $\zeta_{0,1}$
    such that $f|_V = g|_V$.}

  Returning to our setting, without loss of generality, we make a change of
  coordinate so that $x = \zeta_{0,1}$ is the Gauss point, and $T_x f(z) =
  \tilde \lambda z$ for some $\lambda \in \cO_K^\times$. Set $g(z) = \lambda
  z$. Then $f$ and $g$ both fix the Gauss point and satisfy $\tilde f = \tilde
  g$. Thus, there exists a strong neighborhood $V$ of $x$ such that $g$ fixes
  every point $y \in V \cap X$ and $T_y f = T_y g$, which completes the proof.
\end{proof}

Our fourth and final indifference lemma describes a reciprocity between the
fixed-point multipliers of opposing directions along an arc in a fixed
component.

\begin{lemma}[Fourth Indifference Lemma]
    \label{lem:fourth_indifference_lemma}
   Let $f \in K(z)$ be a nonconstant rational function. Suppose that $x_1, x_2$
   are type~II fixed points such every point of the arc $(x_1,x_2)$ is fixed
   and indifferent. For $i=1,2$, let $\vv_i \in T_{x_i}$ be the direction
   containing the arc $(x_1,x_2)$, and let $\lambda_i$ be the fixed-point
   multiplier for $T_{x_i} f$ at $\vv_i$. Then
   \[
     \lambda_1 \lambda_2 = 1 \text{ in $\tilde K$}.
   \]
\end{lemma}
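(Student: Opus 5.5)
The plan is to reduce to a standard coordinate along the arc, attach to each interior point a single multiplier, show it is constant along the arc, and match the constant with $\lambda_1$ and $\lambda_2^{-1}$ at the two ends.

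\emph{Coordinates and invariant.} After a change of coordinate we may assume $x_1 = \zeta_{0,1}$ and $x_2 = \zeta_{0,R}$ with $0<R<1$. Then the arc $(x_1,x_2)$ is $\{\zeta_{0,t} : R<t<1\}$, $\vv_1 = \vec 0 \in T_{x_1}$ and $\vv_2 = \vec\infty \in T_{x_2}$. As in the discussion before the Third Indifference Lemma, we pass to a spherically complete extension $L$ with $|L^\times| = \RR$, so that every point $y = \zeta_{0,t}$ of the arc is of type~II. The tangent-map data at $x_1$ and $x_2$ do not change under this extension.

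At such a $y$, the whole arc is fixed, so $T_y f$ fixes the two directions $\vec 0$ and $\vec\infty$. Since $y$ is indifferent, $T_y f$ is a M\"obius map in the reduced coordinate $\widetilde{z/c}$, where $|c| = t$. A M\"obius map fixing $0$ and $\infty$ has the form $w \mapsto \mu_y w$ with $\mu_y \neq 0$. Its multiplier at $\vec 0$ is $\mu_y$, and at $\vec\infty$ it is $\mu_y^{-1}$. In particular $y$ is never additively indifferent, since $z\mapsto z+1$ has only one fixed direction. So $y$ is either id-indifferent ($\mu_y = 1$) or multiplicatively indifferent. Note that $\mu_y$ does not depend on the choice of $c$: rescaling $c$ by a unit conjugates $\mu_y w$ by a scaling, which leaves $\mu_y$ unchanged.

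\emph{Key step: $y \mapsto \mu_y$ is locally constant on the arc.} By the Third Indifference Lemma, around each $y$ there is a strong neighborhood $V$ on which $f$ agrees with $g(z) = \lambda z$ (with $\tilde\lambda = \mu_y$) in the coordinate centered at $y$, in the sense that $T_{y'} f = T_{y'} g$ for $y' \in V \cap X$. For $g(z) = \lambda z$ one computes directly that $T_{\zeta_{0,t'}} g$ is multiplication by $\tilde\lambda$ in the rescaled coordinate for every $t'$. Hence $\mu_{y'} = \mu_y$ for all $y'$ of the arc in $V$. The strong topology restricted to the arc is the interval topology on $(R,1)$, which is connected, so $\mu_y \equiv \mu$ is constant.

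\emph{Matching the endpoints.} First, $\lambda_1 \neq 0$. If $\vv_1$ were a critical direction, then for $y$ near $x_1$ in $\vv_1$ we would have $\deg_f(y) \ge \deg_f(x_1,\vv_1) > 1$, contradicting the indifference of $y$. Similarly $\lambda_2 \neq 0$.

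Now apply the Second Indifference Lemma at $x_1$ in the direction $\vv_1$. For $y \in (x_1,x_2)$ sufficiently close to $x_1$, it gives $T_y f(z) = \tilde\lambda_1 z$ in the same rescaled coordinate. So $\mu = \lambda_1$, the multiplier in the direction toward $x_2$.

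Applying the same lemma at $x_2$ in the direction $\vv_2 = \vec\infty$ (after the inversion $z \mapsto R/z$, which swaps $\vec 0$ and $\vec\infty$) gives, for $y$ near $x_2$, multiplier $\lambda_2$ in the direction toward $x_1$. The direction toward $x_1$ is $\vec\infty$, where the multiplier is $\mu^{-1}$, so $\mu^{-1} = \lambda_2$. Combining the two, $\lambda_1\lambda_2 = \mu\mu^{-1} = 1$ in $\tilde K$.

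\emph{Main obstacle.} I expect the delicate point to be the local constancy step. The Lemme d'Approximation gives equality $f|_V = g|_V$ only on a strong neighborhood, and we must verify that the rescaled coordinates at nearby points of the arc are compatible, so that the constant $\mu$ has an unambiguous meaning. The coordinate-independence check above is meant to handle exactly this. A secondary issue is keeping track of the coordinate change at $x_2$: under the inversion, the roles of $\vec 0$ and $\vec\infty$ swap, and this swap is what produces the reciprocal.
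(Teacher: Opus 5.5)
Your proposal is correct and follows essentially the same route as the paper: normalize the arc to $\{\zeta_{0,t}\}$, use the Second Indifference Lemma computation at each endpoint to identify the reduction $z \mapsto \mu z$ there, and then use the Lemme d'Approximation argument from the Third Indifference Lemma together with connectedness of the interval to show the multiplier is constant along the arc. The paper takes $R>1$, so it sees $\lambda_1^{-1}$ near $x_1$ and $\lambda_2$ near $x_2$, the mirror image of your $\lambda_1$ and $\lambda_2^{-1}$; your explicit checks that $\vv_i$ is not critical and that type~III points of the arc are covered by passing to $L$ are details the paper leaves implicit.
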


\begin{proof}
  Without loss of generality, we may change coordinate so that $x_1 =
  \zeta_{0,1}$ and $x_2 = \zeta_{0,R}$ for some $R > 1$. Every point
  $\zeta_{0,r}$ with $1 \le r \le R$ is fixed. It follows that if $u \in K$
  satisfies $1 \le |u| \le R$, then the function
  \[
     f_u(z) = u^{-1}f(uz)
  \]
  has nonconstant reduction. We will now compute this reduction.

  An argument just like the one in the proof of the Second Indifference Lemma
  shows that there is a positive number $\varepsilon_1 > 0$ such that for any
  $u \in K$ with $1 < |u| < \varepsilon_1$, we have $\tilde f_u(z) =
  \lambda_1^{-1} z$. (Note that the fixed-point multiplier at $\infty$ for the
  map $z \mapsto \lambda_1^{-1} z$ is $\lambda_1$.) Similarly, there is a
  positive number $\varepsilon_2 > 0$ such that for any $u \in K$ with $R -
  \varepsilon_2 < |u| < R$, we have $\tilde f_u(z) = \lambda_2 z$. Finally, the
  argument in the third indifference lemma shows that for each $r$ with $1 < r
  < R$, there is $\lambda(r) \in \tilde K^\times$ and an open interval $I_r
  \subset (1,R)$ such that
  \[
    |u| \in I_r \quad \Longrightarrow \quad \tilde f_u(z) = \lambda(r) z.
  \]
  Since the intervals $\{I_r : 1 < r < R\}$ cover $(1,R)$, we see that the
  function $r \mapsto \lambda(r)$ is constant. We simply write $\lambda \in
  \tilde K^\times$ for this constant value. Taking $r \in (1,1+\varepsilon)$
  shows that $\lambda = \lambda_1^{-1}$, while taking $r \in (R-\varepsilon_2,
  R)$ shows that $\lambda = \lambda_2$. It follows that $\lambda_1 \lambda_2 =
  1$ in $\tilde K$, as desired.
\end{proof}


\section{Invertible maps}
\label{sec:invertible}

Our goal for this section is to give a complete description of the fixed locus
for an invertible rational function. It is useful to note that the geometry and
topology of $\Fix(f)$ are unaffected by conjugation. Observe that the map $g(z)
= bz + a$ sends the Gauss point to the point $\zeta_{a, |b|}$. Thus a rational
function $f$ fixes $\zeta_{a, |b|}$ if and only if $g^{-1} \circ f \circ g$ fixes
the Gauss point, which is also equivalent to the map $g^{-1} \circ f \circ g$
having nonconstant reduction. Another helpful observation is that any type~II
point lying outside the arc $[0, \infty]$ is of the form $\zeta_{a, |b|}$, for
some $a,b \in K$ satisfying $|a| > |b| > 0$. We use these observations heavily
in the proof of the following theorem.

Define the \textbf{id-indifference locus} of $f$ to be
\[
  I_f = \{x \in \BerkK \smallsetminus \PP^1(K): x \text{ is id-indifferent}\}.
  \]
The proof of the Third Indifference Lemma shows that $I_f$ is open for the
strong topology.

\begin{theorem}
  \label{thm:invertible}
  Let $f \in K(z)$ be an invertible rational function. 
  \begin{enumerate}
  \item\label{item:1} If $f(z) = z$, then $\Fix(f) = \BerkK$.
  \item If $f(z) = z + 1$, then the id-indifference locus for $f$ is 
    \[
      I_f = \{\zeta_{a,r} \in \BerkK : |a| > 1, 1 < r \le |a|\},
    \]
    and $\Fix(f)$ is the closure of $I_f$. (See Figure \ref{fig:1_a}.) In
    particular, $\Fix(f)$ is a single indifferent component containing the
    doubled classical fixed point $\infty$.
  \item If $f(z) = \lambda z$ with $|\lambda| \ne 1$, then $\Fix(f) = \{0, \infty\}$.
  \item If $f(z) = \lambda z$ with $|\lambda| = 1$ and $|\lambda - 1| = 1$, then
    $\Fix(f) = [0,\infty]$ is a single indifferent component.
  \item\label{item:5} If $f(z) = \lambda z$ with $0 < |\lambda - 1| < 1$, then the
    id-indifferent locus for $f$ is
    \[
       I_f = \{\zeta_{a,r} \in \BerkK: |a(\lambda - 1)| < r\},
    \]
    and $\Fix(f)$ is the closure of $I_f$. (See Figure \ref{fig:1_b}.) In
    particular, $\Fix(f)$ is a single indifferent component containing the
    classical fixed points $\{0,\infty\}$.
  \end{enumerate}
  Any other $f$ is conjugate to a function in precisely one of the cases
  \ref{item:1}-\ref{item:5}.
\end{theorem}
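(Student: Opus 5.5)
We first dispose of the final assertion, since it fixes the case split. An invertible $f$ is a Möbius transformation. If it is not the identity, it has one classical fixed point (parabolic) or two (semisimple).

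In the parabolic case we conjugate the fixed point to $\infty$, so $f(z)=z+b$ with $b\neq 0$. Conjugating by $z\mapsto bz$ then gives $z+1$.

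In the semisimple case we move the two fixed points to $0,\infty$, so $f(z)=\lambda z$ with $\lambda\neq 0,1$. The multiplier pair $\{\lambda,\lambda^{-1}\}$ is a conjugacy invariant. By replacing $\lambda$ with $\lambda^{-1}$ if needed we may assume $|\lambda|\le 1$. The three subcases $|\lambda|<1$, $|\lambda|=|\lambda-1|=1$, and $0<|\lambda-1|<1$ are then distinguished by $|\lambda|$ and $|\lambda-1|$. These quantities are invariant under $\lambda\mapsto\lambda^{-1}$ when $|\lambda|=1$, because $|\lambda^{-1}-1|=|\lambda-1|/|\lambda|$. The number of classical fixed points separates the parabolic case from the semisimple ones, so the cases are mutually exclusive.

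For each normal form we compute $\Fix(f)$ directly on points $\zeta_{a,r}$, using $f(\zeta_{a,r})=\zeta_{f(a),\,\cdot}$.

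\textbf{Case $z+1$.} We have $f(\zeta_{a,r})=\zeta_{a+1,r}$, which equals $\zeta_{a,r}$ iff $r\ge 1$. The fixed locus is therefore $\{\infty\}$ together with every disk of radius $\ge1$, so the points of radius $<1$ are excluded.

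To find $I_f$, we conjugate by $g(z)=bz+a$ with $|b|=r$. This gives $g^{-1}fg(z)=z+b^{-1}$, whose reduction is the identity iff $r>1$. For $\zeta_{a,r}$ on the arc $[1,\infty]$ direction, the tangent map is a translation, so such a point is additively indifferent; this happens exactly when $r=\max(1,|a|)$-type points lie on the spine through the Gauss point toward $\infty$.

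Care is needed about which points are additively rather than id-indifferent. I would parametrize points not on $[0,\infty]$ as $\zeta_{a,r}$ with $|a|>r$ and points on it as $\zeta_{0,r}$, and check that id-indifference means $r>1$ and the point is not on the segment where $\tilde{(\,b^{-1})}\neq0$, i.e.\ $|b|=1$. The stated description $\{|a|>1,\ 1<r\le|a|\}$ should emerge from writing points in these normalized coordinates. The closure then adds the Gauss-type boundary points and $\infty$. The doubled fixed point $\infty$ lies in this single connected set, since it is a union of arcs to $\infty$.

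\textbf{Multiplicative cases.} We have $f(\zeta_{a,r})=\zeta_{\lambda a,|\lambda|r}$. If $|\lambda|\ne1$, the radius of a non-classical point changes (take the type-II representatives in an extension as needed), so only $0,\infty$ survive.

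If $|\lambda|=1$, the point is fixed iff $|\lambda a-a|\le r$, i.e.\ $|a||\lambda-1|\le r$. When $|\lambda-1|=1$ this forces $|a|\le r$, which gives exactly $[0,\infty]$. Every type-II point $\zeta_{0,r}$ there has tangent map $\tilde\lambda z\ne z$, so it is multiplicatively indifferent and $I_f=\emptyset$.

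When $|\lambda-1|<1$, we conjugate $\zeta_{a,r}$ to the Gauss point. The reduction is $z+\widetilde{(\lambda-1)a/b}$ with $\tilde\lambda=1$, so the point is id-indifferent iff $|a(\lambda-1)|<r$. The closure adds the points with equality, which are additively indifferent, together with $0,\infty$. Connectedness follows because each fixed point is joined to the arc $[0,\infty]\subset\Fix(f)$ by decreasing $|a|$ along a path inside the locus.

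\textbf{Main obstacle.} The main obstacle is the bookkeeping in the $z+1$ case: matching the normalized parametrization of points to the claimed set $I_f$, and verifying that $\Fix(f)$ is exactly its closure.
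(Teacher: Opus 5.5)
Your proposal follows the paper's proof. Both reduce to the normal forms $z$, $z+1$, $\lambda z$, then conjugate $\zeta_{a,|b|}$ to the Gauss point by $z\mapsto bz+a$ and read off the reduction of $b^{-1}f(bz+a)-a/b$. Your disk criterion $f(\zeta_{a,r})=\zeta_{f(a),r}$ for fixedness replaces the paper's separate $U_a\mapsto U_{a+1}$ argument. Your justification of ``precisely one'' via the conjugacy invariance of $\{\lambda,\lambda^{-1}\}$ is more complete than what the paper writes.

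\textbf{What needs fixing in the $z+1$ case.} You claim that points on the spine from the Gauss point toward $\infty$ are additively indifferent. This is false, and it contradicts your own computation. For $\zeta_{0,r}$ with $r=|b|>1$, the conjugate is $z+b^{-1}$, whose reduction is the identity, so these points are id-indifferent. The additively indifferent points are exactly those of radius $1$: the Gauss point and the $\zeta_{a,1}$ with $|a|>1$.

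Once that sentence is removed, the ``bookkeeping'' you flag takes one line. Your computation gives $I_f=\{\zeta_{a,r}: r>1\}$. If $|a|<r$, then $\zeta_{a,r}=\zeta_{0,r}=\zeta_{c,r}$ for any $c$ with $|c|=r$ (such $c$ exists for type~II points). So this set is exactly the stated $\{\zeta_{a,r}: |a|>1,\ 1<r\le|a|\}$.

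\textbf{A smaller slip in the last case.} The path from $\zeta_{a,r}$ to $[0,\infty]$ is $s\mapsto\zeta_{a,s}$ with $s$ increasing from $r$ to $|a|$, not ``decreasing $|a|$''. It stays in the fixed locus because $|a(\lambda-1)|\le r\le s$.
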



\begin{figure}[ht]
    \centering
    \begin{subfigure}[b]{0.45\textwidth}
      \centering

\begin{tikzpicture}[
            scale=1.4583,
            blackdot/.style={
                circle,
                fill=black,
                inner sep=0.8pt,
                minimum size=0pt
            },
            reddot/.style={
                circle,
                fill=red,
                inner sep=0.8pt,
                minimum size=0pt
            },
            bluedot/.style={
                circle,
                fill=blue,
                inner sep=0.8pt,
                minimum size=0pt
            },
            node distance = 1cm
        ]

            \node[bluedot] (0) [label={[font=\footnotesize, xshift=-2pt, yshift=-5pt]above right:{$\zeta_{0,1}$}}] at (0, 0.225) {};


            \node[reddot] (-1) at (0, 0.975) {};  
                \node[bluedot] (-1_1) at (0.211, 0.896) {};
                \node[bluedot] (-1_2) at (0.144, 0.802) {};
                \node[bluedot] (-1_3) at (-0.211, 0.896) {};
                \node[bluedot] (-1_4) at (-0.144, 0.802) {};

            \node[reddot] (-2) at (0, 1.300) {};  
                \node[bluedot] (-2_1) at (0.276, 1.134) {};
                \node[bluedot] (-2_2) at (0.228, 1.072) {};
                \node[bluedot] (-2_3) at (-0.276, 1.134) {};
                \node[bluedot] (-2_4) at (-0.228, 1.072) {};

            \node[reddot] (-3) at (0, 1.625) {};  
                \node[bluedot] (-3_1) at (0.360, 1.409) {};
                \node[bluedot] (-3_2) at (0.297, 1.328) {};
                \node[bluedot] (-3_3) at (-0.360, 1.409) {};
                \node[bluedot] (-3_4) at (-0.297, 1.328) {};

            \node[reddot] (-4) at (0, 1.950) {};  
                \node[bluedot] (-4_1) at (0.444, 1.684) {};
                \node[bluedot] (-4_2) at (0.366, 1.584) {};
                \node[bluedot] (-4_3) at (-0.444, 1.684) {};
                \node[bluedot] (-4_4) at (-0.366, 1.584) {};

            \node[reddot] (-5) at (0, 2.275) {};  
                \node[bluedot] (-5_1) at (0.527, 1.958) {};
                \node[bluedot] (-5_2) at (0.435, 1.840) {};
                \node[bluedot] (-5_3) at (-0.527, 1.958) {};
                \node[bluedot] (-5_4) at (-0.435, 1.840) {};

            \node[reddot] (-6) at (0, 2.600) {};  
                \node[bluedot] (-6_1) at (0.611, 2.233) {};
                \node[bluedot] (-6_2) at (0.504, 2.096) {};
                \node[bluedot] (-6_3) at (-0.611, 2.233) {};
                \node[bluedot] (-6_4) at (-0.504, 2.096) {};

            \node[reddot] (-7) at (0, 2.925) {};  
                \node[bluedot] (-7_1) [label={[font=\footnotesize]right:{$\zeta_{a,1}$}}] at (0.694, 2.508) {};
                \node[bluedot] (-7_2) at (0.572, 2.353) {};
                \node[bluedot] (-7_3) at (-0.694, 2.508) {};
                \node[bluedot] (-7_4) at (-0.572, 2.353) {};

            \node[blackdot] (C_inf) [label={[font=\footnotesize, xshift=-3pt, yshift=-4pt]above right:{$\infty$}}] at (0, 3.825) {};

            \draw[red, dashed, thick] (0) -- (-1);
            \draw[red, thick] (-1) -- (-2);
            \draw[red, thick] (-2) -- (-3);
            \draw[red, thick] (-3) -- (-4);
            \draw[red, thick] (-4) -- (-5);
            \draw[red, thick] (-5) -- (-6);
            \draw[red, thick] (-6) -- (-7);

            \draw[red, dashed, thick] (-7) -- (C_inf);

            \draw[red, thick] (-1) -- (-1_1);
            \draw[red, thick] (-1) -- (-1_2);
            \draw[red, thick] (-1) -- (-1_3);
            \draw[red, thick] (-1) -- (-1_4);

            \draw[red, thick] (-2) -- (-2_1);
            \draw[red, thick] (-2) -- (-2_2);
            \draw[red, thick] (-2) -- (-2_3);
            \draw[red, thick] (-2) -- (-2_4);

            \draw[red, thick] (-3) -- (-3_1);
            \draw[red, thick] (-3) -- (-3_2);
            \draw[red, thick] (-3) -- (-3_3);
            \draw[red, thick] (-3) -- (-3_4);

            \draw[red, thick] (-4) -- (-4_1);
            \draw[red, thick] (-4) -- (-4_2);
            \draw[red, thick] (-4) -- (-4_3);
            \draw[red, thick] (-4) -- (-4_4);

            \draw[red, thick] (-5) -- (-5_1);
            \draw[red, thick] (-5) -- (-5_2);
            \draw[red, thick] (-5) -- (-5_3);
            \draw[red, thick] (-5) -- (-5_4);

            \draw[red, thick] (-6) -- (-6_1);
            \draw[red, thick] (-6) -- (-6_2);
            \draw[red, thick] (-6) -- (-6_3);
            \draw[red, thick] (-6) -- (-6_4);

            \draw[red, thick] (-7) -- (-7_1);
            \draw[red, thick] (-7) -- (-7_2);
            \draw[red, thick] (-7) -- (-7_3);
            \draw[red, thick] (-7) -- (-7_4);

\end{tikzpicture}
      \caption{$f(z)=z+1$}
      \label{fig:1_a}
    \end{subfigure}
    \hfill 
    \begin{subfigure}[b]{0.45\textwidth}
      \centering

        \begin{tikzpicture}[
            blackdot/.style={
                circle,
                fill=black,
                inner sep=0.8pt,
                minimum size=0pt
            },
            reddot/.style={
                circle,
                fill=red,
                inner sep=0.8pt,
                minimum size=0pt
            },
            bluedot/.style={
                circle,
                fill=blue,
                inner sep=0.8pt,
                minimum size=0pt
            },
            node distance = 1cm
        ]


            \node[blackdot] (C_0) [label={[font=\footnotesize, xshift=-2pt, yshift=-4pt]above right:{$0$}}] at (0, -2.250) {};

            \node[reddot] (2) at (0, -1.200) {};        
                \node[bluedot] (2_1) at (0.314, -1.574) {};   
                \node[bluedot] (2_2) at (0.423, -1.444) {};   
                \node[bluedot] (2_3) at (-0.423, -1.444) {};  
                \node[bluedot] (2_4) at (-0.314, -1.574) {};  

            \node[reddot] (1) at (0, -0.750) {};
                \node[bluedot] (1_1) at (0.314, -1.124) {};   
                \node[bluedot] (1_2) at (0.423, -0.994) {};   
                \node[bluedot] (1_3) at (-0.423, -0.994) {};  
                \node[bluedot] (1_4) at (-0.314, -1.124) {};  

            \node[reddot] (0) at (0, -0.300) {};        
                \node[bluedot] (0_1) at (0.314, -0.674) {};   
                \node[bluedot] (0_2) at (0.423, -0.544) {};   
                \node[bluedot] (0_3) at (-0.423, -0.544) {};  
                \node[bluedot] (0_4) at (-0.314, -0.674) {};  


            \node[reddot] (-1) at (0, 0.150) {};
                \node[bluedot] (-1_1) at (0.423, -0.094) {};   
                \node[bluedot] (-1_2) at (0.314, -0.224) {};   
                \node[bluedot] (-1_3) at (-0.423, -0.094) {};  
                \node[bluedot] (-1_4) at (-0.314, -0.224) {};  

            \node[reddot] (-2) at (0, 0.600) {};
                \node[bluedot] (-2_1) [label={[font=\footnotesize]right:{$\zeta_{a,|a(\lambda-1)|}$}}] at (0.423, 0.356) {};
                \node[bluedot] (-2_2) at (0.314, 0.226) {};
                \node[bluedot] (-2_3) at (-0.423, 0.356) {};
                \node[bluedot] (-2_4) at (-0.314, 0.226) {};

            \node[reddot] (-3) at (0, 1.050) {};
                \node[bluedot] (-3_1) at (0.423, 0.806) {};
                \node[bluedot] (-3_2) at (0.314, 0.676) {};
                \node[bluedot] (-3_3) at (-0.423, 0.806) {};
                \node[bluedot] (-3_4) at (-0.314, 0.676) {};

            \node[reddot] (-4) at (0, 1.500) {};
                \node[bluedot] (-4_1) at (0.423, 1.256) {};
                \node[bluedot] (-4_2) at (0.314, 1.126) {};
                \node[bluedot] (-4_3) at (-0.423, 1.256) {};
                \node[bluedot] (-4_4) at (-0.314, 1.126) {};

            \node[reddot] (-5) at (0, 1.950) {};
                \node[bluedot] (-5_1) at (0.423, 1.706) {};   
                \node[bluedot] (-5_2) at (0.314, 1.576) {};   
                \node[bluedot] (-5_3) at (-0.423, 1.706) {};  
                \node[bluedot] (-5_4) at (-0.314, 1.576) {};  

            \node[blackdot] (C_inf) [label={[font=\footnotesize, xshift=-3pt, yshift=-4pt]above right:{$\infty$}}] at (0, 3.000) {};

            \draw[red, dashed, thick] (2) -- (C_0);
            \draw[red, dashed, thick] (-5) -- (C_inf);

            \draw[red, thick] (2) -- (1);
            \draw[red, thick] (0) -- (1);
            \draw[red, thick] (0) -- (-1);
            \draw[red, thick] (-1) -- (-2);
            \draw[red, thick] (-2) -- (-3);
            \draw[red, thick] (-3) -- (-4);
            \draw[red, thick] (-4) -- (-5);

            \draw[red, thick] (2) -- (2_1);
            \draw[red, thick] (2) -- (2_2);
            \draw[red, thick] (2) -- (2_3);
            \draw[red, thick] (2) -- (2_4);

            \draw[red, thick] (1) -- (1_1);
            \draw[red, thick] (1) -- (1_2);
            \draw[red, thick] (1) -- (1_3);
            \draw[red, thick] (1) -- (1_4);

            \draw[red, thick] (0) -- (0_1);
            \draw[red, thick] (0) -- (0_2);
            \draw[red, thick] (0) -- (0_3);
            \draw[red, thick] (0) -- (0_4);

            \draw[red, thick] (-1) -- (-1_1);
            \draw[red, thick] (-1) -- (-1_2);
            \draw[red, thick] (-1) -- (-1_3);
            \draw[red, thick] (-1) -- (-1_4);

            \draw[red, thick] (-2) -- (-2_1);
            \draw[red, thick] (-2) -- (-2_2);
            \draw[red, thick] (-2) -- (-2_3);
            \draw[red, thick] (-2) -- (-2_4);

            \draw[red, thick] (-3) -- (-3_1);
            \draw[red, thick] (-3) -- (-3_2);
            \draw[red, thick] (-3) -- (-3_3);
            \draw[red, thick] (-3) -- (-3_4);

            \draw[red, thick] (-4) -- (-4_1);
            \draw[red, thick] (-4) -- (-4_2);
            \draw[red, thick] (-4) -- (-4_3);
            \draw[red, thick] (-4) -- (-4_4);

            \draw[red, thick] (-5) -- (-5_1);
            \draw[red, thick] (-5) -- (-5_2);
            \draw[red, thick] (-5) -- (-5_3);
            \draw[red, thick] (-5) -- (-5_4);

        \end{tikzpicture}
      \caption{ $f(z) = \lambda z, \; 0 < |\lambda - 1| < 1.$ }
      \label{fig:1_b}
    \end{subfigure}
    \caption{Schematic of the id-indifference locus (red) and additively
      indifferent points (blue) of $\Fix(f)$. A general additively indifferent
      point has been labeled in each figure. The hyperbolic lengths of the
      maximal fixed arcs leading off of $[0,\infty]$ are discussed at the end
      of this section.}
\end{figure}
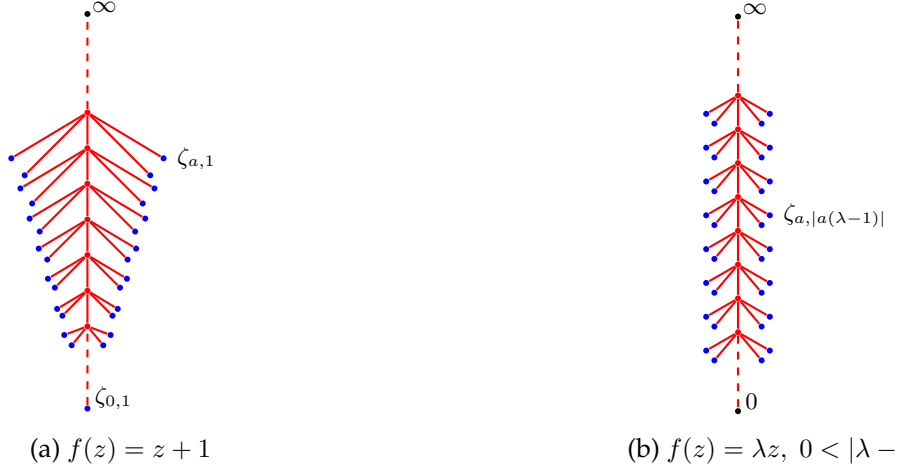


\begin{proof}
If $f$ is the identity map, then evidently $\Fix(f) = \BerkK$. 

If $f$ is not the identity, then $f$ has one or two distinct classical fixed
points. Conjugate one such fixed point to $\infty$ in order to assume that
\[
  f(z) = \lambda z  + \mu, 
  \]
where $\lambda, \mu \in K$ and $\lambda \ne 0$. If $\lambda = 1$, then we may
replace $f$ with
\[
 \mu^{-1}f(\mu z) = z + 1.
\]
If instead $\lambda \ne 1$, then replace $f$ with
\[
   f\big(z + \mu (1-\lambda)^{-1}\big) - \mu (1-\lambda)^{-1} = \lambda z.
\]
Thus, we find that our original $f$ was conjugate to one of the cases listed in
the theorem.

Suppose first that $f(z) = z+1$. The Gauss point is fixed. If $|a| \le 1$ and
$U_a$ denotes the open disk of radius~1 containing $a$, then $f(U_a) =
U_{a+1}$. As $U_{a} \cap U_{a+1} = \varnothing$, the function $f$ does not fix 
any point of the form $\zeta_{a,r}$ with $|a| \leq 1, r < 1$. If $|a| > 1$ and
$|b| \le |a|$, then
\[
g_{a,b}(z) := b^{-1}f(bz + a) - a/b  = z + 1/b.
\]
If $|b| > 1$, then $g_{a,b}$ reduces to the identity. If $|b| = 1$, then
$\tilde g_{a,b}(z) = z + c$ for some $c \in \tilde K^\times$. If $|b| < 1$,
then $g_{a,b}$ has constant reduction. This means $\zeta_{a,r}$ is an
id-indifferent fixed point when $1 < r \le |a|$; it is additively indifferent
if $1 = r < |a|$; and it is not fixed if $r <1$. We conclude that $\Fix(f)$
consists of a single indifferent component that is the closure of $I_f$.

In what remains, we assume that $f(z) = \lambda z$ for some $\lambda \ne 0$.

Suppose that $|\lambda| < 1$. Then $0$ and $\infty$ are attracting and
repelling fixed points, respectively. Since $f(\zeta_{a,r}) = \zeta_{\lambda
  a,|\lambda|r}$, there is no other fixed point. That is, $\Fix(f) =
\{0,\infty\}$.

Next suppose that $|\lambda| > 1$. Replacing $f$ with $1 / f(1/z) =
\lambda^{-1} z$, we find ourselves in the setting of the previous paragraph.

Next suppose that $|\lambda| = 1$ and $|\lambda - 1| = 1$. That is, $\tilde
\lambda \ne 1$. For $b \ne 0$, we have
\[
  b^{-1}f(bz) = \lambda z,
  \]
which shows $f(\zeta_{0,r}) = \zeta_{0,r}$ for all $0 < r < \infty$, and the
tangent map satisfies $T_{\zeta_{0,r}} f(z) = \tilde \lambda z \ne z$. If $|a|
> |b|$, we define
\[
   g_{a,b} := b^{-1}f(bz + a) - a/b  = \lambda z + \frac{a}{b}(\lambda - 1).
\]
Then $g_{a,b}$ has constant reduction, which means $\zeta_{a,|b|}$ is not
fixed.  Type~III and~IV points cannot be isolated fixed points
(Remark~\ref{rem:higher_type_fixed}), so we conclude that $\Fix(f) =
[0,\infty]$.

Finally, suppose that $|\lambda| = 1$ and $|\lambda - 1| < 1$, so that $\tilde
\lambda = 1$. Take $a,b \in K$ with $|a| \ge |b| > 0$. Define
\[
   g_{a,b} := b^{-1}f(bz + a) - a/b  = \lambda z + \frac{a}{b}(\lambda - 1).
\]
If $|a(\lambda-1)| < |b|$, then the reduction of $g_{a,b}$ is the identity. If
instead $|a(\lambda-1)| = |b|$, then $\tilde g_{a,b}(z) = z + c$ for some $c
\in \tilde K \smallsetminus \{0\}$. This means $\zeta_{a,r}$ is an
id-indifferent fixed point if $|a(\lambda - 1)| < r$; it is an additively
indifferent fixed point if $r = |a(\lambda - 1)| \ne 0$; and it is not fixed
otherwise.
\end{proof}

\begin{corollary}
  If $f \in K(z)$ is an invertible rational function, then $\Fix(f)$ has at
  most~2 connected components.
\end{corollary}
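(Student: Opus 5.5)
The plan is to read the corollary off Theorem~\ref{thm:invertible}, using the fact that the number of connected components of $\Fix(f)$ is invariant under conjugation. If $\varphi$ is a M\"obius transformation (which acts on $\BerkK$ as a homeomorphism), then $\Fix(\varphi^{-1}\circ f\circ\varphi) = \varphi^{-1}(\Fix(f))$. This is the remark made at the start of Section~\ref{sec:invertible}. By the final assertion of Theorem~\ref{thm:invertible}, every invertible $f$ is conjugate to a map in one of the cases \ref{item:1}--\ref{item:5}, so it suffices to count components in each case.

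The count is as follows:
\begin{itemize}
  \item In case \ref{item:1}, $\Fix(f) = \BerkK$, which is connected: one component.
  \item In the case $f(z) = z+1$, the theorem states that $\Fix(f)$ is a single indifferent component.
  \item In the case $f(z) = \lambda z$ with $|\lambda| \ne 1$, $\Fix(f) = \{0,\infty\}$: two components.
  \item In the case $|\lambda| = 1$ and $|\lambda - 1| = 1$, $\Fix(f) = [0,\infty]$ is an arc: one component.
  \item In case \ref{item:5}, $\Fix(f)$ is asserted to be a single indifferent component.
\end{itemize}
In every case there are at most two components, which proves the claim.

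The only step that might need care is the connectedness claim in cases 2 and 5, namely that the closure of $I_f$ is connected. The theorem asserts this, but if a direct justification is wanted, one can argue as follows. In case 5, every point $\zeta_{a,r}$ with $r > |a(\lambda-1)|$ is joined to the arc $(0,\infty)$ by the arc $[\zeta_{a,r},\zeta_{0,\max(r,|a|)}]$. Each point $\zeta_{a,s}$ on this arc satisfies $s \ge r > |a(\lambda-1)|$ (recentering at $0$ where $s \ge |a|$), so the whole arc lies in $I_f$. Case 2 is analogous, with $(1,\infty]$ along $[0,\infty]$ in place of $(0,\infty)$. So $I_f$ is path-connected, and its closure is therefore connected.
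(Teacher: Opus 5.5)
Your proposal is correct and matches the paper, which states this corollary without separate proof as an immediate case-by-case read-off from Theorem~\ref{thm:invertible}, using the conjugation invariance of $\Fix(f)$ noted at the start of Section~\ref{sec:invertible}. Your optional direct check of connectedness in cases 2 and 5 is also sound (the condition $|a(\lambda-1)|<s$ does not depend on the choice of center). One trivial caveat: in case 2 the spine should be the open arc $\{\zeta_{0,s} : s>1\}$, since $\infty\in\PP^1(K)$ is not in $I_f$.
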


In the normalized cases of the theorem, we can say something about how far the
fixed locus extends from the arc $[0,\infty]$ in the strong metric. The
only interesting cases are $f(z) = z+1$ and $f(z) = \lambda z$ with $0 <
|\lambda - 1| < 1$.  For $s > r$, write $\rho(\zeta_{a,r},\zeta_{a,s}) =
\log(s/r)$ for the strong metric on $\BerkK \smallsetminus \PP^1(K)$.

If $f(z) = z+1$ and $\zeta_{a,r}$ is additively indifferent, we see that $r =
1$, and
  \[
    \rho(\zeta_{a,r}, \zeta_{a,|a|}) = \log |a| \to \infty
    \]
as $a \to \infty$. That is, the fixed locus is not contained inside any strong
tube around $[\zeta_{0,1},\infty]$. (Compare Figure~\ref{fig:1_a}.)

If $f(z) = \lambda z$ with $0 < |\lambda - 1| < 1$ and $\zeta_{a,r}$ is
additively indifferent, then $r = |a(\lambda - 1)|$, and 
\[
  \rho(\zeta_{a,r}, \zeta_{a,|a|}) = - \log |\lambda -1|.
\] 
In this case, the fixed locus is contained inside the strong closed tube around
$[0,\infty]$ of radius $-\log |\lambda -1|$. (Compare Figure~\ref{fig:1_b}.)


\section{Indifferent components}
\label{sec:indifferent_components}

Indifferent components have a surprisingly rigid global structure, as
illustrated most strongly by the next result.

\begin{proposition}
  \label{prop:two_fixed_pts}
  Let $f \in K(z)$ be a nonconstant rational function of degree $d$. Each
  indifferent component of the fixed locus of $f$ contains exactly two
  classical indifferent fixed points, counted according to fixed-point
  multiplicity. In particular, the number of indifferent components is at most
  $ \frac{d+1}{2}$.
\end{proposition}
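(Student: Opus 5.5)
The plan is to fix a well-chosen type~II point $x \in X$ and read off the classical fixed points of $X$ from the tangent map $T_x f$. Since every point of $X$ is indifferent, $\deg_f(x)=1$, so $T_x f$ is a M\"obius transformation of $\PP^1(\tilde K)$. Such a map, when it is not the identity, has exactly two fixed directions counted with multiplicity, so $\sum_{\vv \in T_x} \tilde F_f(\vv) = 2$.

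First I need an $x \in X$ of type~II with $T_x f \neq \mathrm{id}$. By Remark~\ref{rem:higher_type_fixed}, $X$ contains a type~II point. Suppose every type~II point of $X$ were id-indifferent. Then the Third Indifference Lemma would make $X$ open in the strong topology, hence all of $\BerkK$ by connectedness, so $f$ would be the identity. In that case the claim is checked by hand as case~\ref{item:1} of Theorem~\ref{thm:invertible} (or excluded, since the fixed-point count is then undefined). So I fix such an $x$. The First Identification Lemma then gives
\[
  d+1 = \sum_{\vv} F_f(\vv) = \sum_{\vv} s_f(\vv) + \sum_{\vv}\tilde F_f(\vv) = \sum_{\vv} s_f(\vv) + 2 .
\]
It therefore suffices to show that the classical fixed points of $X$ account for exactly the "$\tilde F$" part.

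The containment $\supseteq$ comes from propagation along arcs. Take a fixed direction $\vv$ of $T_x f$ with multiplier $\tilde\lambda \neq 0$; indifference forces $\tilde\lambda \neq 0$. The Second Indifference Lemma produces a fixed indifferent arc leaving $x$ in the direction $\vv$, and I extend it to a maximal such arc $[x,x_1]$, whose endpoint lies in $X$. If $x_1$ is classical, I have found a fixed point of $X$ in $B_x(\vv)^-$. If $x_1$ is of type~II, the Fourth Indifference Lemma says the direction at $x_1$ pointing back to $x$ is fixed with multiplier $\tilde\lambda^{-1}$. Hence $T_{x_1} f$ is a non-identity M\"obius map with exactly one further fixed direction, counted with multiplicity. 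In the double-fixed-point (additive) case, the backward direction already absorbs both, and I must instead take the maximal id-indifferent region, as in Theorem~\ref{thm:invertible}. I then continue from $x_1$. This process terminates because each step produces new type~II points where $T f$ is non-identity along a tree with finitely many branch directions carrying classical fixed points. It therefore delivers $\tilde F_f(\vv)$ classical fixed points of $X$ in $B_x(\vv)^-$, with multiplicity.

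The main obstacle is the reverse containment: the $\sum s_f(\vv)$ "surplus" classical fixed points lie outside $X$. I would attack it with Proposition~\ref{prop:closest_point}. Suppose a classical fixed point $\xi \notin X$ and let $c$ be the point of $X$ closest to $\xi$, which is a boundary point of $X$, and let $\ww \in T_c$ be the direction toward $\xi$. If $c$ is of type~II with $T_c f \ne \mathrm{id}$, then $\ww$ cannot be fixed with nonzero multiplier, or the Second Indifference Lemma would extend $X$ into $\ww$. So $\tilde F_f(\ww)=0$, and $\xi$ is counted by $s_f(\ww)>0$. The real difficulty is the converse, showing that no classical point of $X$ is counted by surplus at $x$. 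Here I would use the Jump Number Formula: surplus in a direction forces a point $y$ with $\deg_f(y) > \deg_f(y,\ww_x) \ge 1$, which is not indifferent and so not in $X$. I would then try to show that the fixed points of $X$ found in the propagation step exhaust $F_f(\vv)-s_f(\vv)$, by comparing the counts at successive boundary points, using the Lemme d'Approximation to match $f$ with a M\"obius map on strong neighbourhoods of the arcs in $X$. Finally, the bound on the number of indifferent components follows because distinct components are disjoint and $f$ has $d+1$ classical fixed points in total.
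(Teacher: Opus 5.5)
There is a genuine gap. Your reduction is sound. At a type~II point $x\in X$ with $T_xf\neq\mathrm{id}$, the First Identification Lemma gives $\sum_{\vv}s_f(\vv)=d-1$, so everything comes down to showing $F_f(\vv,X)=\tilde F_f(\vv)$ for each $\vv\in T_x$. This is the framework of Lemma~\ref{lem:classical_fixed}; the paper's own proof of this proposition instead counts preimages of a point $x_0\in X$. But your proposal proves neither inequality in that identity.

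\textbf{Where propagation fails.} Propagation works only when the multiplier $\tilde\lambda$ of $\vv$ is not~$1$. In that case $X\cap B_x(\vv)^-$ is an unbranched arc of multiplicatively indifferent points. It ends at a classical point whose multiplier reduces to $\tilde\lambda\neq 1$, so that point is simple.

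When $\tilde\lambda=1$ (the double fixed direction at an additively indifferent $x$), the Second Indifference Lemma hands you \emph{id-indifferent} points. Then $X\cap B_x(\vv)^-$ contains a strongly open id-indifferent region that branches in infinitely many directions at each of its type~II points (compare Figure~\ref{fig:1_a}). There $Tf$ is the identity, so no local information counts the classical fixed points in that region. Theorem~\ref{thm:invertible} covers only M\"obius maps, and the Lemme d'Approximation controls $f$ only near a single point. So your termination claim is unsupported exactly where it matters: a double fixed point, or two fixed points with multipliers $\equiv 1\pmod{\mm}$.

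This case also cannot be avoided by choosing $x$ more cleverly. For $f(z)=z+1$, or $f(z)=\lambda z$ with $0<|\lambda-1|<1$, every type~II point of the component with non-identity tangent map is additively indifferent.

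\textbf{The reverse containment.} This step is only announced. The First Identification Lemma is a numerical identity; it does not single out particular classical points as ``counted by surplus.'' The observation that jump points lie off $X$ is not yet a count.

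\textbf{The missing idea.} Let the Jump Number Formula do the global bookkeeping. This gives both inequalities at once and makes propagation unnecessary.

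\begin{enumerate}
\item Write $B_x(\vv)^-\smallsetminus X=\bigsqcup_\alpha U_\alpha$. Let $y_\alpha\in X$ be the boundary point of $U_\alpha$, and let $\uu_\alpha\in T_{y_\alpha}$ be the direction into $U_\alpha$.
\item For $y\in X\cap B_x(\vv)^-$, you have $\deg_f(y)=1$ because $X$ is indifferent. You also have $\deg_f(y,\ww_x)=1$ because $[y,x]\subset X$ consists of fixed points. So these terms contribute $0$.
\item For $y\in U_\alpha$, the direction toward $x$ is the direction toward $y_\alpha$. Hence those terms sum to $s_f(\uu_\alpha)$, and so $s_f(\vv)=\sum_\alpha s_f(\uu_\alpha)$.
\item Your closest-point argument shows $\uu_\alpha$ is not fixed. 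So $T_{y_\alpha}f\neq\mathrm{id}$, and the First Identification Lemma gives $F_f(\uu_\alpha)=s_f(\uu_\alpha)$.
\item Therefore $B_x(\vv)^-\smallsetminus X$ contains exactly $s_f(\vv)$ classical fixed points. That is, $F_f(\vv,X)=\tilde F_f(\vv)$, and summing over $\vv$ gives~$2$.
\end{enumerate}

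The paper argues differently. Since $T_{y_\alpha}f(\uu_\alpha)$ points away from $X$, \cite[Prop.~3.10]{Faber_Berk_RamI_2013} shows that $U_\alpha$ contains $s_f(\uu_\alpha)=F_f(\uu_\alpha)$ preimages of a fixed $x_0\in X$. And $x_0$ has exactly $d-1$ preimages off $X$.

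\textbf{Choosing $x$.} Any point of $X$ bounding a component of $\BerkK\smallsetminus X$ has a non-fixed direction. When $f\neq\mathrm{id}$, this gives you the required $x$ directly. It avoids your strong-openness argument, which as written does not handle type~III and~IV points of $X$.
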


\begin{proof}
  Let $X$ be an indifferent component of the fixed locus. Select a
  distinguished point $x_0 \in X$ for the duration of this argument. All
  classical fixed points are counted according to fixed-point multiplicity and
  pre-images are counted according to their local degree.

  The complement of $X$ is a union of open disks. Let $U$ be such a disk, let
  $x \in X$ be its unique boundary point, and let $\vv \in T_x$ be the
  direction at $x$ pointing toward $U$. Note that $\vv$ cannot be a fixed
  direction, as otherwise there would be an open arc $(x,y) \subset B_x(\vv)^-$
  consisting of fixed points, contradicting the fact that $U$ contains no point
  of $X$. Since there is a non-fixed direction, the tangent map $T_x f$ is not
  the identity. Rumely's First Identification Lemma tells us that
  \[
     F_f(\vv) = s_f(\vv). 
  \]
  As $x$ is indifferent and every direction $\ww$ at $x$ pointing along $X$ is
  fixed, $T_{x}f$ maps $\vv$ to a direction outside $X$. From
  \cite[Prop.~3.10]{Faber_Berk_RamI_2013}, we conclude that $U$ contains $r$
  classical fixed points if and only if it contains $r$ pre-images of the
  distinguished point $x_0$.

  Let $U_1, \ldots, U_n$ be the open disks in the complement of $X$ that
  contain a pre-image of $x_0$. Since $x_0$ is an indifferent fixed point, and
  since every other point of $X$ is fixed, there are exactly $d-1$ pre-images
  of $x_0$ in $\cup_{i} U_i$. By the preceding paragraph, this union contains
  exactly $d-1$ classical fixed points. The remaining two classical fixed
  points of $f$ must lie inside $X$.
\end{proof}

\begin{corollary}
  \label{cor:at_least_3_peaked}
  Let $f \in K(z)$ be a nonconstant rational function. If $x$ is a classical
  fixed point for $f$ with fixed-point multiplicity at least~3, then it lies in
  a peaked component of $\Fix(f)$.
\end{corollary}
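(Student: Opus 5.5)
We argue by elimination using the classification of components in Proposition~\ref{prop:components}. Let $x$ be a classical fixed point of multiplicity $m \ge 3$, and let $X$ be the component of $\Fix(f)$ containing $x$. We show that $X$ is neither classical nor indifferent; it must then be peaked.

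First we rule out the classical case. A fixed point has multiplicity at least~2 exactly when its multiplier is $\lambda_x = 1$; this is the standard fact that $f(z) - z$ has a multiple zero precisely when $Df(x) = 1$, after moving $x$ away from $\infty$. So $|\lambda_x| = 1$, and $x$ is indifferent. By Lemma~\ref{lem:type_I}, $x$ is not isolated in $\Fix(f)$. Proposition~\ref{prop:components} says a classical component consists of an attracting or repelling fixed point, so $X$ is not classical.

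Next we rule out the indifferent case. If $X$ were indifferent, Proposition~\ref{prop:two_fixed_pts} would say that $X$ contains exactly two classical fixed points counted with multiplicity. But $x \in X$ alone contributes $m \ge 3$, which is a contradiction. Hence $X$ is peaked.

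The only step that needs care is the identification ``multiplicity $\ge 2$ implies multiplier $1$''. This is routine: conjugate so that $x = 0$ and look at the order of vanishing of $f(z) - z$, which equals the multiplicity for a finite fixed point. The rest is a direct citation of earlier results.
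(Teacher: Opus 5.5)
Your proof is correct and takes the same approach as the paper, which states this corollary without proof immediately after Proposition~\ref{prop:two_fixed_pts}. The intended argument is yours: multiplicity at least~3 forces $\lambda_x = 1$, so $x$ is not isolated (Lemma~\ref{lem:type_I}). Its component therefore cannot be classical, and it cannot be indifferent, since $x$ alone already contributes more than two classical fixed points counted with multiplicity.
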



The Indifference Lemmas imply that there is a reciprocity between the
fixed-point multipliers of the two fixed points in an indifferent component.

\begin{proposition}
    \label{prop:indifferent_component_types}
  Suppose that $X$ is an indifferent component of $\Fix(f)$ for a rational
  function $f \in K(z)$.
  \begin{enumerate}
    \item If $X$ contains a unique classical fixed point, then every other
      point of $X$ is id-indifferent or additively indifferent.
    \item If $X$ contains two distinct classical fixed points $x \ne y$, then
      the multipliers at $x$ and $y$ satisfy
      \[
      \lambda_x \lambda_y \equiv 1 \pmod{\mm}.
      \]
      If $\lambda_x \equiv 1 \pmod{\mm}$, every other point of $[x,y]$ is
      id-indifferent. If $\lambda_x \not\equiv 1 \pmod{\mm}$, then every other
      point of $X$ is multiplicatively indifferent, and $X = [x,y]$.
  \end{enumerate}
\end{proposition}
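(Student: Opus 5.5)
By Proposition~\ref{prop:two_fixed_pts}, $X$ contains exactly two classical fixed points counted with multiplicity. So either there is one classical fixed point of multiplicity two, or two distinct simple ones. Both are indifferent, because attracting and repelling classical points form their own components (Lemma~\ref{lem:type_I}). The plan is to propagate the indifference type along arcs in $X$. For this we combine the First and Second Indifference Lemmas, which give the germ of $X$ leaving a classical point or a fixed direction, with the Third Indifference Lemma, which makes the type locally constant, and the Fourth, which gives reciprocity of multipliers. The key tool for passing from tangent data at type~II points to classical fixed points is the First Identification Lemma.

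The basic local step is the following claim. Let $z \in X$ be a type~II point with $T_z f(w) = cw$ and $\tilde c \ne 1$ (multiplicatively indifferent). Then exactly two directions at $z$ are fixed, namely the two fixed points $0,\infty$ of $T_z f$, and each is simple. Every direction along $X$ must be fixed, so $X$ near $z$ is contained in these two directions. Each of these directions contains a classical fixed point of $f$, since $\tilde F_f = 1 > 0$ and the First Identification Lemma applies because $T_z f \ne \mathrm{id}$. By the Second Indifference Lemma, each of the two directions is also entered by a fixed arc whose type~II points have tangent map $\tilde c w$. By Lemma~\ref{lem:third_indifference_lemma}, the set of multiplicatively indifferent points of $X$ is relatively open. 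I will show it is also closed along arcs. Suppose we move along such an arc toward a limit point $z'$ in $X$. Lemma~\ref{lem:fourth_indifference_lemma} forces the multiplier at $z'$ in the incoming direction to be $\tilde c^{-1} \ne 1$ if $z'$ is type~II. Then $z'$ is neither id-indifferent nor additively indifferent, since an additive map has its only fixed direction with multiplier $1$. If $z'$ is type~III or~IV, I would pass to $\BerkL$ as in the paper. Hence a maximal multiplicatively indifferent arc can only terminate at classical points.

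For part~(2), take $x \ne y$ distinct classical points of $X$, and consider the arc $[x,y] \subset X$. Suppose first $\tilde\lambda_x \ne 1$. The First Indifference Lemma starts a multiplicatively indifferent arc at $x$ with $T f = \tilde\lambda_x w$ in the coordinate adapted at $x$. By the propagation above, it continues until it reaches a classical point, which must be $y$ since there are only two. The Fourth Indifference Lemma, applied on subarcs together with the orientation reversal relative to the First Indifference Lemma at $y$, gives $\lambda_x\lambda_y \equiv 1$. For $X = [x,y]$: at each type~II point of $(x,y)$ the only fixed directions are the two along the arc, so nothing else of $X$ branches off. Type~III/IV points of the arc have only two directions anyway. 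Hence $X = [x,y]$.

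Now suppose $\tilde\lambda_x = 1$. Then $\tilde\lambda_y = 1$ as well, since otherwise the previous case with roles swapped gives $\tilde\lambda_x = \tilde\lambda_y^{-1} \ne 1$. Points of $(x,y)$ near $x$ are id-indifferent by the First Indifference Lemma. Suppose some type~II point $z \in (x,y)$ is not id-indifferent. It is not multiplicatively indifferent, since it would then lie on a multiplicatively indifferent arc running to classical points, contradicting $\tilde\lambda_x = 1$. So $z$ is additively indifferent, with a unique fixed direction $\vv$, of multiplicity two. But $x$ and $y$ lie in two distinct fixed directions at $z$ (both along $X$), which is a contradiction. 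Using that id-indifference is open (Lemma~\ref{lem:third_indifference_lemma}), together with a closedness argument along the arc, gives id-indifference on all of $(x,y)$. Reciprocity is trivial here.

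For part~(1), let $x$ be the unique classical point, of multiplicity two. If some type~II point $z \in X$ were multiplicatively indifferent, the claim would produce classical fixed points in two distinct directions at $z$ lying in $X$. This contradicts uniqueness; those classical points lie in $X$ because the Second Indifference Lemma arcs stay multiplicatively indifferent up to them. Hence every non-classical point is id- or additively indifferent.

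The main obstacle is the closedness/propagation step. One must show that a maximal multiplicatively indifferent arc cannot end at a non-classical point, including handling type~III/IV limits via $\BerkL$ and the orientation conventions in Lemma~\ref{lem:fourth_indifference_lemma}. One must also verify that the classical points it reaches lie in $X$ rather than merely in the same direction.
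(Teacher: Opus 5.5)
Your proof is correct, but it uses a different main lemma from the paper's.

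\textbf{The paper's route.} The paper gets constancy of the indifference type along arcs of $X$ from the Third Indifference Lemma. A type~II point in the interior of an arc of $X$ has two fixed directions, so it cannot be additively indifferent. The id-indifferent and multiplicatively indifferent loci are both relatively open, so connectedness finishes the argument. The type on $(x,y)$ is then read off from the First Indifference Lemma. The relation $\lambda_x\lambda_y\equiv 1$ is proved by a separate coordinate computation: put $x,y$ at $0,\infty$, show that the multiplier of $\vec{0}$ at $\zeta_{0,r}$ is locally constant in $r$, and compare $\tilde f$ with the reduction of $1/f(1/z)$ at the Gauss point.

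\textbf{Your route.} You let the Fourth Indifference Lemma do the work. It gives closedness of the multiplicatively indifferent locus. The Second Indifference Lemma then propagates such arcs until they reach classical fixed points. Part~(1) follows because a multiplicatively indifferent point would produce two distinct classical points of $X$. The multiplier relation comes for free by applying the Fourth Lemma to type~II points near $x$ and near $y$, which saves redoing the paper's coordinate computation.

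\textbf{What it costs.} You need an endpoint analysis of maximal arcs, including type~III/IV limits handled in $\BerkL$. For instance, a type~IV limit is excluded because the First Identification Lemma would put a $K$-rational fixed point in a direction containing no $K$-points. The paper's openness argument never needs this analysis.

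\textbf{A shortcut.} You can avoid that bookkeeping entirely. Any two type~II points of $X$ are joined by an arc of fixed indifferent points. So the Fourth Lemma already shows that if one type~II point of $X$ is multiplicatively indifferent, then all of them are. The whole proposition then follows from the First Indifference Lemma at a classical point, together with counting fixed directions at type~II points of $X$.
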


\begin{proof}
  Suppose first that $X$ contains a unique classical fixed point, say
  $x$. Since an indifferent component must contain exactly two classical fixed
  points (Proposition~\ref{prop:two_fixed_pts}), we see that $x$
  is a double fixed point and $\lambda_x = 1$. By the First Indifference Lemma, there exists an
  id-indifferent fixed point $y$ in $X$. Let $y' \in X$ be a non-leaf type~II
  point. By the Third Indifference Lemma, the indifference type is locally
  constant along $[y,y']$, and hence constant. Since $y$ is id-indifferent, so
  is every other point in the arc. It follows that every non-leaf point of $X$
  is id-indifferent. Evidently, the leaf points of $X$ are additively
  indifferent points and $x$.

  Now suppose that $X$ contains two distinct classical fixed points, say $x$
  and $y$. The argument in the previous paragraph can be used again to show
  that the indifference type along $(x,y)$ is locally constant, and hence
  constant. By the First Indifference Lemma, we see that every type~II point of
  $(x,y)$ has tangent map $z \mapsto \tilde \lambda_x z$ in some coordinate. If
  $\lambda_x \equiv 1 \pmod{\mm}$, then every point of $(x,y)$ is
  id-indifferent. Otherwise, every point is multiplicatively indifferent and $X
  = [x,y]$.

  It remains to address the claim about the multipliers. Without loss of
  generality, we may choose a coordinate so that $x = 0$ and $y = \infty$. The
  argument in the proof of the First Indifference Lemma shows that we can
  consistently choose coordinates so that $T_{\zeta_{0,r}} f(z) = \tilde
  \lambda_0 z$ for all sufficiently small $r > 0$. For any $r > 0$, the
  direction $\vec{0}$ is fixed at $\zeta_{0,r}$, and the argument in the Second
  Indifference Lemma shows that the fixed-point multiplier for $T_{\zeta_{0,r}}
  f$ at $\vec{0}$ is locally constant. Since we know the value is $\tilde
  \lambda_0$ for $r$ sufficiently small, we see that it is $\tilde \lambda_0$
  for all $r > 0$.

  To address the fixed point at $\infty$, we set $g(z) = 1/f(1/z)$ and use the
  argument in the preceding paragraph again. It follows that the fixed-point
  multiplier for $T_{\zeta_{0,r}} g$ at $\vec{0}$ is $\tilde \lambda_\infty$
  for all $r > 0$. In particular, the reduction at $\zeta_{0,1}$ is $\tilde
  g(z) = \tilde \lambda_\infty z$. By the previous paragraph, we see that
  \[
  \tilde \lambda_\infty z = \tilde g(z)
  = 1/\tilde f(1/z) = \tilde \lambda_0^{-1} z.
  \]
  That is, $\lambda_0 \lambda_\infty \equiv 1 \pmod \mm$, as desired. 
\end{proof}

Now we argue that indifferent components do not exist for polynomial maps or
maps with potential good reduction. This is accomplished by exhibiting certain
special subtrees of $\BerkK$ that an indifferent component must meet.

Fix a rational function $f \in K(z)$. For $S \subset \BerkK$, the
\textbf{connected hull} of $S$ is defined to be the smallest connected subset
of $\BerkK$ containing $S$. For any $y \in \BerkK$, we define a finitely
branching subtree $\Gamma_y$ to be the connected hull of $y$ and all of its
pre-images inside $\BerkK$:
\[
  \Gamma_y = \Hull(\{y\} \cup f^{-1}\{y\}).
\]

Following Rumely \cite[\S5]{Rumely_new_equivariant}, if $x$ is a type~II fixed
point, a direction $\vv \in T_x$ is called a \textbf{shearing direction} for
$f$ if $T_x f(\vv) \ne \vv$ and there exists a classical fixed point for $f$ in
the direction $\vv$. Remarkably, if $x$ has a shearing direction, then $x \in
\Gamma_y$ for every $y \in \BerkK$. In fact, more is true.

\begin{lemma}
  \label{lem:big_intersection}
  Let $f \in K(z)$ be a nonconstant rational function, and let $S$ be the set
  of all fixed points of $f$ with a shearing direction. Then
  \[
    \Hull(S) \subset \bigcap_{y \in \BerkK} \Gamma_y.
  \]
\end{lemma}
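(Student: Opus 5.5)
The plan is to show that every fixed point $x$ with a shearing direction lies in $\Gamma_y$ for every $y \in \BerkK$. The inclusion then follows formally. Each $\Gamma_y$ is a connected hull, hence connected, so once it contains $S$ it contains $\Hull(S)$, the smallest connected set containing $S$. Intersecting over $y$ gives $\Hull(S) \subset \bigcap_y \Gamma_y$.

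Fix such an $x$. By definition $x$ is a type~II fixed point, and it has a direction $\vv \in T_x$ with $T_x f(\vv) \ne \vv$ and $F_f(\vv) > 0$.

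\textbf{Step 1: positive surplus.} Because $\vv$ is not fixed, $T_x f$ is not the identity, so Rumely's First Identification Lemma applies. It gives $F_f(\vv) = s_f(\vv) + \tilde F_f(\vv) = s_f(\vv)$, since $\tilde F_f(\vv) = 0$ for a non-fixed direction. Hence $s_f(\vv) > 0$.

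By the standard description of surplus multiplicity \cite[\S3.3]{Faber_Berk_RamI_2013}, a direction with positive surplus has $f(B_x(\vv)^-) = \BerkK$. Therefore for every $y$ the open ball $B_x(\vv)^-$ contains a pre-image of $y$.

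\textbf{Step 2: $x \in \Gamma_y$.} If $y = x$, there is nothing to prove. Otherwise we split into cases according to the position of $y$.

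\begin{itemize}
  \item \emph{Case $y \notin B_x(\vv)^-$.} Let $z \in B_x(\vv)^-$ be a pre-image of $y$ from Step~1. Then $y$ and $z$ lie in different components of $\BerkK \smallsetminus \{x\}$ (or $y = x$). So the arc $[y,z]$ passes through $x$, and this arc lies in $\Gamma_y$.
  \item \emph{Case $y \in B_x(\vv)^-$.} We need a pre-image of $y$ outside $B_x(\vv)^-$. Since $x$ is a type~II fixed point, $T_x f$ is given by a nonconstant reduction and is surjective on $T_x$. Choose $\ww \in T_x$ with $T_x f(\ww) = \vv$. Then $\ww \ne \vv$, since $T_x f(\vv) \ne \vv$. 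The image $f(B_x(\ww)^-)$ is either $B_x(\vv)^-$ or all of $\BerkK$; in either case it contains $y$. So some pre-image $z$ of $y$ lies in $B_x(\ww)^-$, which is disjoint from $B_x(\vv)^- \ni y$. The arc $[y,z]$ again passes through $x$, so $x \in \Gamma_y$.
\end{itemize}

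\textbf{Main obstacle.} The delicate input is Step~1's passage from positive surplus to surjectivity of $f$ on the ball $B_x(\vv)^-$. I would cite this directly from the mapping properties of open balls in \cite[\S3.3]{Faber_Berk_RamI_2013}: a ball maps either onto a ball with exact multiplicity, or onto all of $\BerkK$, the latter precisely when the surplus is positive. A secondary point is the surjectivity of the tangent map $T_x f$ in Step~2. This follows because the reduction of $f$ at a fixed type~II point is a nonconstant rational map on $\PP^1(\tilde K)$.
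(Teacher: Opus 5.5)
Your proof is correct and is essentially the paper's argument: the First Identification Lemma gives positive surplus in the shearing direction, so that ball maps onto all of $\BerkK$. A preimage direction under $T_x f$, distinct from the shearing direction because the latter is not fixed, then handles the points $y$ inside the shearing ball. The only cosmetic difference is that you get $\Hull(S) \subset \Gamma_y$ from the connectedness of each $\Gamma_y$ individually, whereas the paper first checks that $\bigcap_y \Gamma_y$ is connected; the two are equivalent.
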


\begin{proof}
  To begin, we claim that $Y := \bigcap_{y \in \BerkK} \Gamma_y$ is
  connected. Indeed, if $x,x' \in Y$, let $[x,x']$ be the arc connecting
  them. Since $\Gamma_y$ is connected for each $y$, we find that $[x,x'] \in
  \Gamma_y$. Thus, $[x,x'] \subset Y$, so that $Y$ is connected.
  
  It therefore suffices to show that $S \subset Y$, or equivalently, that $S
  \subset \Gamma_y$ for each $y \in \BerkK$. Let $x \in S$, and let $\vv \in
  T_x$ be a direction such that $\ww = T_x f(\vv)$ is a shearing direction. As
  $\ww$ is a shearing direction, $\vv \neq \ww$ and $x$ is not
  id-indifferent. Writing $D = B_x(\ww)^-$, we find that $f(D) = \BerkK$ by
  Rumely's First Identification Lemma. If $y \in D$, then there is a pre-image
  of $y$ in $B_x(\vv)^-$, implying $x \in \Gamma_y$.  If instead $y \not\in D$,
  then there is a pre-image of $y$ inside $D$ since $f(D) = \BerkK$. Again, we
  find that $x \in \Gamma_y$. We conclude that $x \in \Gamma_y$ for all $y$,
  and the proof is complete.
\end{proof}

\begin{lemma}
  \label{lem:two_bad}
  Let $x$ be a fixed point of type~II or~III for a rational function $f \in
  K(z)$. If there are two bad directions for $f$ at $x$, then $x \in \bigcap_{y
    \in \BerkK} \Gamma_y$.
\end{lemma}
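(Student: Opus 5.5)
The plan is to show directly that $x \in \Gamma_y$ for every $y \in \BerkK$; the argument parallels the proof of Lemma~\ref{lem:big_intersection}. Let $\vv_1 \ne \vv_2$ be two bad directions at $x$, so $s_f(\vv_i) > 0$ for $i=1,2$. Write $D_i = B_x(\vv_i)^-$. The basic fact about surplus multiplicity (see \cite[\S3.3]{Faber_Berk_RamI_2013}) is that $s_f(\vv) > 0$ if and only if $f(B_x(\vv)^-) = \BerkK$. Moreover, for any point outside the image direction $B_{x}(T_xf(\vv))^-$, the number of its pre-images in $B_x(\vv)^-$, counted with multiplicity, is exactly $s_f(\vv)$. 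In particular each $D_i$ surjects onto $\BerkK$.

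Now fix $y \in \BerkK$. If $y = x$, then $x \in \Gamma_y$ trivially. Otherwise $y$ lies in a unique direction $\ww \in T_x$. Since $\vv_1 \ne \vv_2$, at least one of them, say $\vv_1$, satisfies $\vv_1 \ne \ww$, so $y \notin D_1$. As $f(D_1) = \BerkK$, there is a pre-image $z \in D_1$ of $y$. The points $y$ and $z$ lie in different components of $\BerkK \smallsetminus \{x\}$, so the arc $[y,z]$ passes through $x$. Both endpoints lie in $\{y\} \cup f^{-1}\{y\}$, hence $x \in [y,z] \subset \Gamma_y$. This gives $x \in \bigcap_{y} \Gamma_y$.

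The only step needing care is the justification that a bad direction has full image $\BerkK$ when $x$ is of type~III rather than type~II. For type~II this is \cite[Lem.~3.17, Prop.~3.10]{Faber_Berk_RamI_2013}. For type~III, a point has only two directions, so the two bad directions exhaust $T_x$. I would still invoke the same characterization of surplus multiplicity, which holds at any non-classical point. If needed, I would pass to the spherically complete extension $L$ described before the Third Indifference Lemma, where $x$ becomes type~II, and use that bad directions and pre-images are compatible with this base change.

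Notice that fixedness of $x$ is not actually used beyond setting up the surplus multiplicities.
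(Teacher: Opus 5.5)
Your proof is correct and is essentially the paper's argument: choose a bad direction whose ball does not contain $y$, use that its ball surjects onto $\BerkK$ to get a pre-image $z$ of $y$ on the other side of $x$, and conclude $x \in [y,z] \subset \Gamma_y$. The extra base change to $L$ for type~III points is unnecessary, because the dichotomy that $f(B_x(\vv)^-)$ equals either $B_{f(x)}(T_xf(\vv))^-$ or all of $\BerkK$ holds at every non-classical point.
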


\begin{proof}
  Let $\uu, \vv \in T_x$ be bad directions for $f$. If $y \in B_x(\uu)^-$, then
  $y$ has a pre-image in $B_x(\vv)^-$ since $\vv$ is bad. If $y \not\in
  B_x(\uu)^-$, then $y$ has a pre-image in $B_x(\uu)^-$ since $\uu$ is bad. In
  either case, $x \in \Gamma_y$.
\end{proof}

\begin{proposition}
  \label{prop:indiff_inter}
  Let $f \in K(z)$ be a nonconstant rational function of degree $d \ge 2$. Let
  $X$ be an indifferent component of $\Fix(f)$. Let $X_{\sh}$ be the set of all
  points in $X$ with a shearing direction. Then
  \[
    X \cap \bigcap_{y \in \BerkK} \Gamma_y = \Hull (X_{\sh}) \ne \varnothing.
  \]
\end{proposition}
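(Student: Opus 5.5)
We first show that $\Hull(X_{\sh})$ is nonempty and contained in the left-hand side, and then establish the reverse inclusion by exhibiting, for each point of $X$ outside $\Hull(X_{\sh})$, a $y$ with $x \notin \Gamma_y$.

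\emph{Forward inclusion.} Since $X_{\sh}\subset X$ and $X$ is connected, we have $\Hull(X_{\sh}) \subset X$. By Lemma~\ref{lem:big_intersection}, $\Hull(X_{\sh}) \subset \bigcap_y \Gamma_y$.

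\emph{Nonemptiness, and a description of $X_{\sh}$.} We reuse the structure established in the proof of Proposition~\ref{prop:two_fixed_pts}. There, every direction $\vv$ at a point $x\in X$ pointing into a complementary disk $U$ of $X$ is non-fixed. Every direction pointing along $X$ is fixed, because $x$ is indifferent and all nearby points of $X$ are fixed.
\begin{itemize}
  \item It follows that $x\in X$ has a shearing direction exactly when some complementary disk $U$ with boundary point $x$ contains a classical fixed point.
  \item By Proposition~\ref{prop:two_fixed_pts}, $X$ contains exactly $2$ classical fixed points counted with multiplicity, so $d-1\ge 1$ of them lie in complementary disks.
  \item Hence $X_{\sh}\neq\varnothing$.
  \item Moreover, only finitely many disks contain fixed points, so $X_{\sh}$ is finite. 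Therefore $H:=\Hull(X_{\sh})$ is a finite tree, in particular closed and path-connected.
\end{itemize}

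\emph{Reverse inclusion.} Suppose $x\in X$ but $x\notin H$. Let $c\in H$ be the point of $H$ closest to $x$ (Proposition~\ref{prop:closest_point}). Let $\ww\in T_c$ be the direction toward $x$, and set $D=B_c(\ww)^-$, so that $x\in D$ and $D\cap H=\varnothing$. We take $y=c$ and show $x\notin\Gamma_c$.

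The point $c$ is an indifferent fixed point, so it is its own preimage with local degree $1$. The proof of Proposition~\ref{prop:two_fixed_pts}, run with distinguished point $x_0=c$, shows the following. The remaining $d-1$ preimages of $c$ lie in complementary disks $U$ of $X$, and a disk contains preimages of $c$ exactly when it contains classical fixed points. Each such disk therefore has boundary point in $X_{\sh}\subset H$.

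We claim no such $U$ meets $D$. If the boundary point of $U$ lies in $H\smallsetminus\{c\}$, then $U$ is a direction at a point outside $D$ not containing $c$, so $U\cap D=\varnothing$. If the boundary point is $c$, then $U\neq D$ because $D\ni x\in X$ while $U\cap X=\varnothing$.

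Thus $\{c\}\cup f^{-1}\{c\}$ lies in the closed connected set $\BerkK\smallsetminus D$. Its hull $\Gamma_c$ also lies there, so $x\notin\Gamma_c$. This proves $X\cap\bigcap_y\Gamma_y\subset H$.

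The main point requiring care is the identification, from Proposition~\ref{prop:two_fixed_pts}, that the disks containing preimages of $c$ are exactly those containing fixed points, applied with $x_0=c$. This holds because that argument works for an arbitrary distinguished point of $X$.
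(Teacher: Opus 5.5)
Your proof is correct. The forward inclusion and the nonemptiness of $X_{\sh}$ are essentially the paper's argument, but your reverse inclusion takes a genuinely different route.

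The paper argues locally at the ends of $X\cap\bigcap_y\Gamma_y$. It reduces to showing that every point $x$ of valence at most~1 in this closed connected set has a shearing direction, and then analyzes bad directions at $x$. A non-fixed bad direction is shearing by the First Identification Lemma. Two fixed bad directions would, via Lemma~\ref{lem:two_bad}, put nearby points of $X$ into the intersection and raise the valence. A unique fixed bad direction $\vv$ yields $y\in B_x(\vv)^-$ all of whose preimages stay in $B_x(\vv)^-$, so $x\notin\Gamma_y$.

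You argue globally instead. You note that $X_{\sh}$ has at most $d-1$ points, one per complementary disk containing a classical fixed point, so $\Hull(X_{\sh})$ is a finite tree. For $x\in X\smallsetminus\Hull(X_{\sh})$ you take the closest point $c$ of $\Hull(X_{\sh})$. You then rerun the preimage count from the proof of Proposition~\ref{prop:two_fixed_pts} with base point $x_0=c$, which confines $f^{-1}\{c\}$ to complementary disks rooted in $X_{\sh}$. This gives the explicit witness $y=c$ with $x\notin\Gamma_c$.

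Your version buys several things:
\begin{itemize}
\item It dispenses with bad directions and Lemma~\ref{lem:two_bad} entirely.
\item It avoids the step, stated without proof in the paper, that points of $X$ near such an end again have two bad directions.
\item It avoids the implicit topological fact that a closed connected subset of $\BerkK$ is the hull of its points of valence at most~1.
\item It yields the bound $\#X_{\sh}\le d-1$ for free.
\end{itemize}
The paper's version, in exchange, is purely local: it explains from the tangent map and surplus data at a single point why an end of the intersection must carry a shearing direction. It uses Proposition~\ref{prop:two_fixed_pts} only to produce a fixed point outside $X$, not its full preimage-matching argument.
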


\begin{proof}
  For brevity, set $Y := \bigcap_{y \in \BerkK} \Gamma_y$.
  
  Since $f$ has $d+1 \ge 3$ classical fixed points,
  Proposition~\ref{prop:two_fixed_pts} shows that there is a classical fixed
  point $y$ lying outside $X$. Let $x$ be the point of $X$ that is closest to
  $y$ (Proposition~\ref{prop:closest_point}). Let $\vv \in T_x$ be the
  direction containing $y$. Then $\vv$ cannot be a fixed direction, else there
  would be points of $X \cap B_x(\vv)^-$ closer to $y$ than $x$ is. It follows
  that $\vv$ is a shearing direction, implying $\Hull(X_{\sh}) \ne
  \varnothing$. Lemma~\ref{lem:big_intersection} implies that $\Hull (X_{\sh})
  \subseteq X \cap Y$. To complete the proof, we need to show $\Hull (X_{\sh})
  \supseteq X \cap Y$, for which it is enough to show that every point of the
  connected closed set $X \cap Y$ with valence at most~1 has a shearing
  direction. Let $x$ be such a point.

  The point $x$ must have at least one bad direction since $d \ge 2$ and
  $\deg_f(x) = 1$. If a bad direction at $x$ is not fixed, then it is a
  shearing direction by Rumely's First Identification Lemma, and we are
  done. So we assume that every bad direction at $x$ is fixed. If $x$ has two
  distinct bad directions $\uu, \vv$, then any point $x' \in X$ sufficiently
  close to $x$ along an arc in $\uu$ or $\vv$ also has two bad directions. Such
  a point $x'$ lies in $Y$ by Lemma~\ref{lem:two_bad}, which means $x$ has
  valence at least~2 in $X \cap Y$, a contradiction. Thus, there is a unique
  bad direction $\vv$ at $x$, and it is fixed. Let $y \in B_x(\vv)^-$. Since
  $\deg_f(x) = 1$, no other direction maps to $\vv$. It follows that every
  pre-image of $y$ lies in $B_x(\vv)^-$, so that $x \not\in \Gamma_y$. This
  contradiction establishes the proposition.
\end{proof}

\begin{corollary}
  \label{cor:no_indifferent}
  Let $f \in K(z)$ have degree at least~2 and a totally ramified fixed
  point. Then $\Fix(f)$ has no indifferent component. In particular, this
  applies when $f$ is a polynomial or has potential good reduction.
\end{corollary}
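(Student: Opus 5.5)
Suppose, toward a contradiction, that $X$ is an indifferent component of $\Fix(f)$ and that $\zeta$ is a totally ramified fixed point of $f$, so $\deg_f(\zeta) = d \ge 2$. The plan is to use Proposition~\ref{prop:indiff_inter}, which says that $X$ meets $Y := \bigcap_{y \in \BerkK} \Gamma_y$, and to show that $Y$ is too small to meet an indifferent component.

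The key computation is that $f^{-1}\{\zeta\} = \{\zeta\}$, because $\zeta$ has local degree $d$ and so accounts for every preimage of itself. Hence $\Gamma_\zeta = \Hull(\{\zeta\}) = \{\zeta\}$, and therefore $Y \subseteq \{\zeta\}$. Proposition~\ref{prop:indiff_inter} then gives $\zeta \in X \cap Y$, so $\zeta \in X$. But every point of an indifferent component is indifferent. If $\zeta$ is classical, then $\deg_f(\zeta) = d \ge 2$ forces the multiplier to vanish, so $\zeta$ is attracting. If $\zeta$ is non-classical, then $\deg_f(\zeta) = d > 1$ makes it repelling. Either way we have a contradiction.

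For the special cases, a polynomial of degree $d \ge 2$ has $\infty$ as a totally ramified fixed point. A map with potential good reduction fixes, after conjugation, the Gauss point, and good reduction means $\deg_f(\zeta_{0,1}) = \deg \tilde f = d$, so the Gauss point is a totally ramified fixed point.

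The only subtle step is justifying $f^{-1}\{\zeta\} = \{\zeta\}$ for a type~II totally ramified point. This is the standard fact that $\sum_{f(x)=y} \deg_f(x) = d$ for every $y \in \BerkK$, and we would cite it from \cite{Baker-Rumely_BerkBook_2010}.
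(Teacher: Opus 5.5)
Your proof is correct and follows the paper's argument. From $\Gamma_\zeta = \{\zeta\}$ you get $\bigcap_y \Gamma_y \subseteq \{\zeta\}$, and Proposition~\ref{prop:indiff_inter} then places the totally ramified point $\zeta$ inside any indifferent component, which is absurd. The only difference is how the classical case is excluded: the paper observes that $\bigcap_y \Gamma_y$ contains no $K$-rational point, while you note directly that a classical point of local degree $d \ge 2$ has multiplier $0$ and so is attracting, which is equally valid and a bit more direct.
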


\begin{proof}
  Let $x$ be a totally ramified fixed point for $f$. Then $\Gamma_x =
  \{x\}$. If $y_0,y_1$ are two $K$-rational points with $y_1 \not\in \{y_0,
  f(y_0)\} \cup f^{-1}(y_0)$, we find $\Gamma_{y_0} \cap \Gamma_{y_1}$ contains
  no $K$-rational point. Thus, $\{x\} = \bigcap_{y \in \BerkK} \Gamma_y$ is not
  a classical fixed point. By Proposition~\ref{prop:indiff_inter}, the point
  $x$ must lie in any indifferent component. Being totally ramified, the local
  degree of $x$ is at least~2, which is absurd.
\end{proof}

We close with a quadratic family of examples that exhibits different types of
components in its fixed locus.  It also serves as an application of the results
in this section.

\begin{example}
  Suppose the field $K$ has characteristic different from~$2$. Recall that we
  write $\cO, \mm$ for the ring of integers and the maximal ideal of $K$,
  respectively. Let $a,b, T \in \cO$ satisfy the following constraints:
  \[
    |a| = |b| = 1, \quad |T| < |2|, \quad \text{and} \quad |2T| < |b + b^{-1} -2| \le 1.
  \]
  Consider the family of quadratic functions
  \[
    f(z) = \frac{z^{2} + a(b+T)z}{(b^{-1}+T)z + a}.
  \]
  The classical fixed points of $f$ are $0, \infty$, and $\alpha
  :=\frac{a(b-1+T)}{b^{-1}-1+T}$. One verifies that $f_u(z) := u^{-1}f(uz)$ has
  reduction $\tilde b z$ for all $u \in K \smallsetminus \{0\}$. It follows
  that the entire arc $[0, \infty]$ consists of indifferent fixed points.
 
  Suppose first that $b = 1 - T$ or $b^{-1} = 1-T$. Then $0$ or $\infty$ is a
  double fixed point, which implies that the component of the fixed locus
  containing the arc $[0,\infty]$ is peaked
  (Corollary~\ref{cor:at_least_3_peaked}). Since $f$ is quadratic, it must have
  potential good reduction, and $\Fix(f)$ consists of a single component by
  Corollary~\ref{cor:no_indifferent}.

  Now suppose that $b \ne 1-T \ne b^{-1}$. The multiplier of the fixed
  point $\alpha$ is 
  \[
     \lambda_{\alpha} = \frac{2T + (b+b^{-1}-2)}{2T + T(b+b^{-1}-2) + T^{2}}.
  \]
  One checks that
  \begin{align*}
    \lambda_\alpha \equiv 1 \pmod \mm & \text{ if } |b+b^{-1} - 2| < |2T| \\    
    |\lambda_\alpha| > 1 & \text{ if } |2T| < |b+b^{-1} - 2| \le 1.
  \end{align*}
  Thus, the fixed point $\alpha$ is indifferent in the first case and repelling
  in the secon.

  Suppose that $|b + b^{-1} - 2| < |2T|$. All three of the classical fixed
  points are indifferent. By Proposition~\ref{prop:two_fixed_pts}, $f$ must
  have a peaked component and hence must have potential good reduction. In
  Corollary~\ref{cor:total_no_of_nonclassical_comp} below, we will find that
  $\Fix(f)$ is connected.

  Finally, suppose that $|b + b^{-1} - 2| > |2T|$. The existence of a repelling
  classical fixed point implies that $f$ does not have potential good
  reduction, and the component of the fixed locus containing $[0,\infty]$ is
  indifferent. If $b \equiv 1 \pmod \mm$, then the open arc $(0,\infty)$
  consists of id-indifferent fixed points
  (Proposition~\ref{prop:indifferent_component_types}). If $b \not \equiv 1
  \pmod \mm$, then the arc $(0,\infty)$ consist of multiplicatively indifferent
  fixed points (Proposition~\ref{prop:indifferent_component_types}). Note that
  $f$ cannot have a peaked component since it does not have potential good
  reduction. It follows from Corollary~\ref{cor:no_indifferent} that $\Fix(f)$
  consists of the isolated component $\{\alpha\}$ and the indifferent component
  containing $[0,\infty]$.
\end{example}



\section{Classical fixed points in a non-classical component}
\label{sec:counting_classical}

Let $f \in K(z)$ be a nonconstant rational function, let $X$ be a non-classical
component of $\Fix(f)$, and let $x \in X$ be a type~II point. For $\vv \in
T_x$, write
\begin{itemize}
  \item $F_f(\vv,X)$ for the number of classical fixed points in $X \cap
    B_x(\vv)^-$, counted according to fixed-point multiplicity, and
  \item $\Ncf(f,x)$ for the number of critically fixed directions of $T_x f$.
\end{itemize}

\begin{lemma}
  \label{lem:classical_fixed}
  Let $f$ be a nonconstant rational function. Let $X$ be a non-classical
  component of $\Fix(f)$, and let $x \in X$ be a type~II point that is not
  id-indifferent. For a direction $\vv \in T_x$ that is not critically fixed,
  the set $X \cap B_x(\vv)^-$ contains finitely many type~II repelling fixed
  points, and
  \[
  F_f(\vv,X) = \tilde F_f(\vv) +
  \sum_{\substack{y \in X \cap B_x(\vv)^- \\ \text{type~II repelling}}}
  \left( \deg_f(y) - 1 - \Ncf(f,y)\right),
  \]
  where $\tilde F_f(\vv)$ was defined in \S\ref{sec:rumely}. 
\end{lemma}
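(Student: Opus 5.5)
We argue by induction, in the form of a descent through the tree $X \cap B_x(\vv)^-$, combining Rumely's First Identification Lemma with the Jump Number Formula. Since $x$ is not id-indifferent, $T_x f$ is not the identity, so the First Identification Lemma applies at $x$ and gives $F_f(\vv) = s_f(\vv) + \tilde F_f(\vv)$.

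First we dispose of the non-fixed case. If $\vv$ is not fixed by $T_x f$, then no point of $B_x(\vv)^-$ near $x$ is fixed. Since $X$ is connected and contains $x$, this forces $X \cap B_x(\vv)^- = \varnothing$, and both sides of the formula are $0$.

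So assume $\vv$ is fixed with nonzero multiplier $\tilde\lambda$; this is where we use that $\vv$ is not critically fixed. The Second Indifference Lemma gives an arc $(x,x')$ in $B_x(\vv)^-$ of indifferent fixed points, all with tangent maps $z\mapsto\tilde\lambda z$. Follow the maximal arc of indifferent points from $x$ into $\vv$. Along it, the local degree is $1$, and the surplus $s_f$ in the forward direction stays constant by the Jump Number Formula, because no local degree jumps occur at indifferent points. The arc ends in one of two ways.
\begin{enumerate}
\item It reaches a classical point, necessarily indifferent, which contributes to $F_f(\vv,X)$.
\item It reaches a type~II point $y$ where branching occurs. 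At $y$ we split the count over the directions at $y$.
\end{enumerate}
If $y$ is indifferent and not id-indifferent, we recurse: directions at $y$ other than the one back to $x$ are either non-fixed, and then contribute nothing to $X$, or fixed and non-critical by degree $1$. If $y$ is repelling, the directions split into three kinds.
\begin{itemize}
\item The critically fixed directions contribute nothing to classical fixed points in $X$ beyond what the surplus accounts for. This is the step needing care: one must argue that $X$ in such a direction contributes only via surplus, or show it is handled by the sum over repelling points.
\item The non-critical fixed directions recurse.
\item The non-fixed directions contain no points of $X$.
\end{itemize}

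The bookkeeping identity we aim for is as follows. For a repelling $y$ with $\ww$ the direction back to $x$, the First Identification Lemma at $y$ together with the Riemann--Hurwitz count for $T_y f$ expresses the number of fixed points of the reduction, which is $\deg_f(y)+1$, as
\[
\sum_{\uu} \tilde F_f(\uu) = \deg_f(y) + 1.
\]
Comparing this with the surplus jump $\deg_f(y)-\deg_f(y,\ww)$ yields exactly the correction $\deg_f(y)-1-\Ncf(f,y)$. The idea is to combine three facts:
\begin{itemize}
\item the fixed-direction multiplicities at $y$ sum to $\deg_f(y)+1$;
\item the directions leaving $X$ carry $s_f$ fixed points that are not in $X$;
\item the total surplus is $d-\deg_f(y)$.
\end{itemize}
Summing over the finitely many branch points then telescopes to the stated formula.

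Finiteness of the repelling type~II points in $X\cap B_x(\vv)^-$ follows because each one is a point of local degree $\ge 2$ lying on a fixed tree. More precisely, each forces a jump counted in the Jump Number Formula, and $s_f(\vv)\le d$. To make the descent terminate, we order the branches by surplus: each recursive call is made in a direction with surplus at most that of the parent, and strictly smaller after passing a repelling point or a split. So we can induct on $s_f(\vv)$.

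I expect the main obstacle to be the accounting at a repelling branch point $y$. Specifically, it is showing that critically fixed directions at $y$ (those with $\tilde\lambda=0$) contain no classical fixed points of $X$ other than those already counted, so that each is subtracted exactly once in $\Ncf(f,y)$. I would first try to prove that near $y$ in such a direction the map is strictly contracting toward $y$ in the hyperbolic metric. Then $X$ cannot enter that direction, and its fixed points, counted by $\tilde F_f$ and $s_f$, all lie outside $X$.
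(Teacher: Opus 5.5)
Your finiteness argument is essentially right: for $y\in X\cap B_x(\vv)^-$ the arc $[y,x]$ consists of fixed points, so $\deg_f(y,\ww_x)=1$, and each repelling $y$ contributes a jump $\deg_f(y)-1\ge 1$ to $s_f(\vv)$. The descent you propose for the count, however, has a genuine gap. It presupposes two things:
\begin{itemize}
\item $X\cap B_x(\vv)^-$ is a finite tree whose splitting points are type~II points where the First Identification Lemma applies;
\item the surplus strictly drops at every split.
\end{itemize}
Both fail once the arc you follow has multiplier $\tilde\lambda=1$, which the hypotheses allow (e.g.\ $x$ additively indifferent). In that case the Second Indifference Lemma puts you on id-indifferent points. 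There all infinitely many directions are fixed with multiplier $1$, so each contains an arc of $X$, and the First Identification Lemma is unavailable.

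Already for $f(z)=z+1$, $x=\zeta_{0,1}$, $\vv=\vec\infty$ (not critically fixed, $\tilde F_f(\vv)=2$), the set $X\cap B_x(\vv)^-$ contains the whole id-indifference locus. There $X$ splits at every id-indifferent type~II point, it has infinitely many leaves $\zeta_{a,1}$ with $|a|>1$, and every surplus is $0$. Your case analysis never treats an id-indifferent $y$. There are no ``finitely many branch points'' to telescope over, and the induction on $s_f$ does not terminate.

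The indifferent-arc step is also wrong. The forward surplus is not constant along an arc of indifferent points: it drops at any point of the arc having a non-fixed bad side direction, for instance a multiplicatively indifferent point, which is not a branch point of $X$. Those drops are exactly classical fixed points outside $X$ that must be subtracted. Your bookkeeping identity at repelling points, which would have to absorb them, is only announced, not derived. A minor point: in a critically fixed direction at $y$, $f$ expands away from $y$ by the factor $\deg_f(y,\uu)\ge 2$ rather than contracting. Your conclusion that $X$ misses such a direction is still correct.

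The missing idea is to do the count globally, so the shape of $X$ never enters. Decompose $B_x(\vv)^-\smallsetminus X$ into open disks $U_\alpha$. Let $\uu_\alpha$ be the direction at the boundary point $y_\alpha\in X$ of $U_\alpha$ pointing into $U_\alpha$. Apply the Jump Number Formula once to the whole ball and split the sum:
\begin{itemize}
\item points of $X$ contribute $\deg_f(y)-1$, nonzero only at type~II repelling points;
\item the points of each $U_\alpha$ together contribute $s_f(\uu_\alpha)$, by the Jump Number Formula at $y_\alpha$.
\end{itemize}
Hence $F_f(\vv)=\tilde F_f(\vv)+\sum_{y}(\deg_f(y)-1)+\sum_\alpha s_f(\uu_\alpha)$.

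Each $y_\alpha$ is not id-indifferent, since otherwise every direction there would contain a fixed arc. So the First Identification Lemma at $y_\alpha$ gives $F_f(\uu_\alpha)=s_f(\uu_\alpha)$ when $\uu_\alpha$ is not fixed, and $F_f(\uu_\alpha)=s_f(\uu_\alpha)+1$ when it is fixed. In the fixed case $\uu_\alpha$ must be critically fixed, or the Second Indifference Lemma would produce a fixed arc inside $U_\alpha$; hence $y_\alpha$ is repelling. Conversely, every critically fixed direction at a repelling point of $X$ is some $\uu_\alpha$, by the expansion above. Thus
\[
\sum_\alpha s_f(\uu_\alpha)=\sum_\alpha F_f(\uu_\alpha)-\sum_y \Ncf(f,y),
\]
and $F_f(\vv,X)=F_f(\vv)-\sum_\alpha F_f(\uu_\alpha)$ yields the formula with no induction at all. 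This is the paper's proof.
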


\begin{proof}
  If the direction $\vv$ is not fixed under the map $T_x f$, then $X \cap
  B_x(\vv)^- = \varnothing$, and the lemma follows trivially. So we assume that
  $\vv$ is fixed and non-critical under $T_{x}f$.  The set $B_x(\vv)^-
  \smallsetminus X$ is a disjoint union of open disks $\{U_\alpha\}$. Let
  $\uu_\alpha$ be the tangent vector at the unique boundary point of $U_\alpha$
  that points into $U_\alpha$. Then $\{\uu_\alpha\}$ is the set of all tangent
  vectors at boundary points of $X \cap B_x(\vv)^-$ that point out of
  $X$. Using the Jump Number Formula (\S\ref{sec:rumely}) and Rumely's First
  Identification Lemma, we see that
  \begin{align*}
    F_f(\vv) &= \tilde F_f(\vv) + s_f(\vv) \\
    &= \tilde F_f(\vv) + \sum_{y \in B_x(\vv)^-} \Big(\deg_f(y) -
    \deg_f(y,\ww_x)\Big) \\
    &= \tilde F_f(\vv) + \sum_{y \in X \cap B_x(\vv)^-}  \Big(\deg_f(y) -
    \deg_f(y,\ww_x)\Big) \\
    &\phantom{= \tilde F_f(\vv)}+ \sum_\alpha \sum_{y \in U_\alpha} \Big(\deg_f(y) -
    \deg_f(y,\ww_x)\Big)\\
    &= \tilde F_f(\vv) + \sum_{y \in X \cap B_x(\vv)^-}  \Big(\deg_f(y) -
    \deg_f(y,\ww_x)\Big)
    + \sum_\alpha s_f(\uu_\alpha).
  \end{align*}

  As $x,y \in X$ and $X$ is connected, the arc $[y,x]$ lies entirely in
  $X$. This arc is a representative of the direction $\ww_x$; since it consists
  entirely of fixed points, we have $\deg_f(y, \ww_x) = 1$. Applying this
  observation to the above calculation, we obtain
  \begin{equation}
    \label{eq:ball_surplus}
    F_f(\vv) = \tilde F_f(\vv) +
    \sum_{\substack{y \in X \cap B_x(\vv)^- \\ \text{type~II repelling}}} \left(\deg_f(y) - 1\right)
    + \sum_\alpha s_f(\uu_\alpha).
  \end{equation}
  Since $s_f(\uu_\alpha) \ge 0$, we find
  \begin{align*}
  \#\{ y \in X \cap B_x(\vv)^- : y \text{ type~II repelling}\} &\le
  \sum_{\substack{y \in X \cap B_x(\vv)^- \\ \text{type~II repelling}}} \left(\deg_f(y) - 1\right) \\
  &\le F_f(\vv) - \tilde F_f(\vv) < \infty. 
  \end{align*}

  Fix an index $\alpha$, and let $y$ be the unique boundary point of
  $U_\alpha$. If $\uu_\alpha$ is a non-fixed direction for $T_y f$, then
  Rumely's First Identification Lemma shows that $F_f(\uu_\alpha) =
  s_f(\uu_\alpha)$.  If instead $\uu_\alpha$ is a fixed direction, then it must
  be true that $\deg_f(y,\uu_\alpha) > 1$. For otherwise, there would be an arc
  of fixed points $(y, y')$ inside $U_\alpha$, contradicting the fact that
  $\uu_\alpha$ points out of $X$. Thus, $\uu_\alpha$ is a critically fixed
  direction for $T_y f$. This can only occur if $y$ is a type~II repelling
  fixed point. It follows that $F_f(\uu_\alpha) = 1 + s_f(\uu_\alpha)$. 
  Applying these observations to \eqref{eq:ball_surplus}, we see that
  \begin{align*}
    F_f(\vv) &= \tilde F_f(\vv) + \sum_{\substack{y \in X \cap B_x(\vv)^-
        \\ \text{type~II repelling}}} \left(\deg_f(y) - 1 - \Ncf(f,y)\right) +
    \sum_\alpha F_f(\uu_\alpha).
  \end{align*}
  The proof concludes upon observing that
  \[
     F_f(\vv,X) = F_f(\vv) - \sum_\alpha F_f(\uu_\alpha). \qedhere
  \]
\end{proof}

Finally, we recall and prove Theorem~A.

\begin{theoremA}
  Let $f \in K(z)$ be a nonconstant rational function, and let $X$ be a
  non-classical component of $\Fix(f)$. Then $X$ contains finitely many type II
  repelling fixed points, and the number of classical fixed points inside $X$,
  counted according to fixed-point multiplicity, is
  \[
     2 + \sum_{\substack{x \in X \\ \text{type~II repelling}}} \left(\deg_f(x) - 1 - \Ncf(f,x)\right).
  \]
\end{theoremA}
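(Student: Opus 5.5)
The plan is to reduce Theorem~A to Lemma~\ref{lem:classical_fixed} by choosing a good base point $x \in X$ and summing the lemma over all directions at $x$.

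\textbf{Indifferent components.} If $X$ is indifferent, there are no type~II repelling points, so the claimed count is exactly $2$. This is Proposition~\ref{prop:two_fixed_pts}, so only peaked components remain.

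\textbf{Peaked components: the base point.} Take $X$ peaked and let $x \in X$ be a type~II repelling fixed point, which exists by definition. Since $\deg_f(x) \ge 2$, the point $x$ is not id-indifferent, so Lemma~\ref{lem:classical_fixed} applies to every direction $\vv \in T_x$ that is not critically fixed.

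\textbf{Excluding critically fixed directions.} Let $\vv$ be critically fixed, so $\deg_f(x,\vv) \ge 2$. I claim $X \cap B_x(\vv)^- = \varnothing$. If not, connectedness of $X$ gives an arc $(x,x') \subset X$ starting in direction $\vv$. Every point of this arc is fixed, so $f$ maps a germ of the arc onto itself. Hence $\deg_f(x,\vv) = 1$, by the same reasoning the paper uses to get $\deg_f(y,\ww_x) = 1$ along fixed arcs. This contradiction proves the claim.

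\textbf{Summing over directions.} Next I would sum the lemma over the non-critically-fixed directions. Classical fixed points of $X$ lie in directions at $x$, since $x$ is type~II and so not itself classical. Directions not fixed by $T_xf$ contribute zero. This gives
\[
\#(\text{classical fixed points in } X) = \sum_{\vv \text{ not crit.\ fixed}} \tilde F_f(\vv) + \sum_{\substack{y \in X\smallsetminus\{x\}\\ \text{type~II repelling}}} \bigl(\deg_f(y)-1-\Ncf(f,y)\bigr).
\]
Finiteness of the set of type~II repelling points follows at once: each direction contains finitely many, only finitely many directions are fixed, and critically fixed directions contain none.

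\textbf{The residual sum.} It remains to show that $\sum_{\vv \text{ not crit.\ fixed}} \tilde F_f(\vv) = 2 + \deg_f(x) - 1 - \Ncf(f,x)$. The reduction $T_xf$ is a rational map of degree $\deg_f(x) =: e$ on $\PP^1(\tilde K)$. It is not the identity, since $e \ge 2$, so its fixed points counted with multiplicity total $e+1$. A critically fixed direction (tangent-map multiplier $0$) is a simple fixed point of $T_xf$, so it has $\tilde F_f(\vv) = 1$. Subtracting these gives $e + 1 - \Ncf(f,x)$, which equals $2 + (\deg_f(x) - 1 - \Ncf(f,x))$. Combining this with the displayed identity proves the formula.

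\textbf{Main obstacle.} I expect the main difficulty to be justifying carefully that a critically fixed direction carries no points of $X$, and that it is a simple fixed point of the reduction. The second is true because a superattracting fixed point of a rational map has multiplicity one, as its multiplier $0 \ne 1$; this holds in any characteristic. The first needs the local-degree argument along an arc of fixed points, as above.
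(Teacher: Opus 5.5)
Your proposal is correct and follows the paper's proof essentially step for step. You use Proposition~\ref{prop:two_fixed_pts} for indifferent components, pick a type~II repelling base point $x$, observe that critically fixed and non-fixed directions at $x$ contain no points of $X$, sum Lemma~\ref{lem:classical_fixed} over the remaining directions, and count $\deg_f(x)+1-\Ncf(f,x)$ non-critical fixed points of $T_xf$; the only differences are that your argument absorbs the singleton case $X=\{x\}$, which the paper treats separately, and that you make explicit that a critically fixed direction is a simple fixed point of $T_xf$, which the paper uses tacitly.
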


\begin{proof}
  Finiteness of the set of type~II repelling fixed points in $X$ follows directly
  from Lemma~\ref{lem:classical_fixed}.
  
  Suppose first that $X$ is an indifferent component. Then $X$ has precisely~2
  classical fixed points by Proposition~\ref{prop:two_fixed_pts}, and the
  formula in the theorem holds.

  Next suppose that $X = \{x\}$ is an isolated repelling type~II fixed
  point. Then every fixed direction in $T_x$ is critical. This means
  $\Ncf(f,x) = \deg_f(x) + 1$, and the formula in the theorem equals zero, as
  expected of a component with no classical fixed point.

  Finally, suppose that $X$ is a peaked component, but not a singleton. Select
  a type~II repelling fixed point~$x$. For a critically fixed direction $\vv
  \in T_x$ , there is an arc $(x,y) \subset B_x(\vv)^-$ on which $f$ expands
  lengths. In particular, $X \cap B_x(\vv)^- = \varnothing$. For $\vv$ a
  non-fixed direction, it is clear that $X \cap B_x(\vv)^- =
  \varnothing$. Lemma~\ref{lem:classical_fixed} shows that the number of
  classical fixed points in $X$, counted with fixed-point multiplicity, is 
  \begin{align*}
  \sum_{\substack{\vv \in T_x \\ \text{fixed, non-critical}}} &
  \left[\tilde F_v(\vv) +
  \sum_{\substack{y \in X \cap B_x(\vv)^- \\ \text{type~II repelling}}}
  \left( \deg_f(y) - 1 - \Ncf(f,y)\right)\right] \\
  &= \sum_{\substack{\vv \in T_x \\ \text{fixed, non-critical}}}
  \tilde F_v(\vv) +
  \sum_{\substack{y \in X \smallsetminus \{x\} \\ \text{type~II repelling}}}
  \left( \deg_f(y) - 1 - \Ncf(f,y)\right).
  \end{align*}
  Since the rational function $T_x f$ has $\deg_f(x) + 1$ fixed points, the
  first sum is
  \[
  \sum_{\substack{\vv \in T_x \\ \text{fixed, non-critical}}}
  \tilde F_v(\vv)
  = \deg_f(x) + 1 - \Ncf(f,x) = 2 + \big(\deg_f(x) - 1 - \Ncf(f,x)\big). \qedhere
  \]
\end{proof}


\section{Hyperbolic components}
\label{sec:hyperbolic}

For a rational function $f \in K(z)$, a component of $\Fix(f)$ that does not
contain a classical fixed point will be called \textbf{hyperbolic}. This
terminology is justified by the fact that such a component resides entirely
inside $\BerkK \smallsetminus \PP^1(K)$, which is often called the ``Berkovich
hyperbolic space''. We will see below that such components are relatively rare. 

\begin{proposition}
  \label{prop:hyperbolic_hull}
  Let $f \in K(z)$ be a nonconstant rational function, and let $X$ be a
  hyperbolic component of $\Fix(f)$. Then $X$ is the connected hull of its
  repelling type~II fixed points. In particular, $X$ contains no id-indifferent
  or additively indifferent fixed point.
\end{proposition}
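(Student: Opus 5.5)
The plan is to show that every leaf of $X$ is a repelling type~II fixed point, and that $X$ contains no id-indifferent or additively indifferent point. Since $X$ is closed and connected, once we know its points of valence at most one all lie in the finite set $R$ of repelling type~II points of $X$ (finite by Theorem~A), we get $X = \Hull(R)$. Indeed, any $z \in X \smallsetminus \Hull(R)$ would lie in a direction at some point of $\Hull(R)$. Following $X$ away from $\Hull(R)$ through $z$, using compactness and the fact that $X$ is closed, one reaches an endpoint of $X$ outside $R$. First note $X \ne \varnothing$ has no classical points. By Remark~\ref{rem:higher_type_fixed}, every indifferent component contains a classical point. Hence $X$ is peaked, so $R \ne \varnothing$.

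The first step is to rule out id-indifferent points. Suppose $y \in X$ is id-indifferent. By the Third Indifference Lemma there is a strong neighborhood $V$ of $y$ consisting of id-indifferent points, so all of $V$ lies in $X$. I then consider the set $W$ of id-indifferent points in $X$ strongly connected to $y$; it is open. I would argue $W$ must extend to a classical point. Take any direction $\vv$ at $y$ not containing a point of $R$; all but finitely many directions qualify. Every point of $B_y(\vv)^- \cap X$ has degree 1 there. Apply Lemma~\ref{lem:classical_fixed} at a non-id-indifferent point, or more directly the Jump Number Formula and Rumely's First Identification Lemma, to the ball $B_y(\vv)^-$. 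For all but finitely many $\vv$ we have $s_f(\vv)=0$, and $T_y f$ is the identity, so $f$ maps the ball $B_y(\vv)^-$ bijectively onto itself. Such a degree-one self-map of an open disk has a classical fixed point in its closure; more concretely, in coordinates it is of the form $z \mapsto z + $ (small), and a Newton-polygon/Weierstrass argument on $f(z)-z$ yields a zero in the disk. That fixed point lies in $X$ because the whole ball is id-indifferent near $y$. This contradicts hyperbolicity. I expect this step to be the main obstacle. The cleanest route is probably to use Lemma~\ref{lem:classical_fixed} at a point $x \in R$ in the direction toward $y$. Then show the quantity $F_f(\vv,X) - \tilde F_f(\vv)$ is forced positive, or find a classical point directly via the disk argument.

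The second step treats additively indifferent points. Suppose $y \in X$ is additively indifferent, with $T_y f(z)=z+1$. Its unique fixed direction $\vv$ is fixed with multiplicity $\tilde F_f(\vv)=2$ and is non-critical. Apply Lemma~\ref{lem:classical_fixed} at $y$ in the direction $\vv$. All other directions are non-fixed, so $X \subset B_y(\vv)^- \cup \{y\}$. This gives
\[
0 = F_f(\vv,X) = 2 + \sum_{z \in R} \left(\deg_f(z) - 1 - \Ncf(f,z)\right).
\]
Theorem~A applied to $X$ gives exactly the same total. So both conditions are consistent, and the count alone won't suffice. Instead I'd use the local structure: near $y$ in direction $\vv$ the fixed points form an arc of id-indifferent points, as in the proof of Theorem~\ref{thm:invertible}, case $z+1$. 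By the Lemme d'Approximation, $f$ agrees with $z\mapsto z+1$ near $y$, and the points $\zeta_{a,r}$ with $r>1$ just above $y$ are id-indifferent. This contradicts the first step.

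Finally, multiplicatively indifferent points can only be interior points. At such a point $y$, the tangent map $cz$ has two non-critical fixed directions, each carrying an arc of fixed points by the Second Indifference Lemma, so $y$ has valence at least $2$ in $X$. Type~III/IV points likewise have valence $2$ by Lemma~10.80 and the structure in the Second Indifference Lemma after base change to $L$. Hence all leaves of $X$ lie in $R$, completing the hull argument above.
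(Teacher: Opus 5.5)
\textbf{There is a genuine gap in your Step~1, which carries the whole argument.} The claim you use there is false.

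At an id-indifferent point $y$, the First Identification Lemma is unavailable, since it requires $T_y f\neq\mathrm{id}$. A bijective self-map of an open disk need not have a classical fixed point. Take $f(z)=z+1$ and $y=\zeta_{0,|a|}=\zeta_{a,|a|}$ with $|a|>1$, which is id-indifferent by Theorem~\ref{thm:invertible}. For every direction $\vv$ at $y$ other than the one toward $\infty$, the ball $B_y(\vv)^-$ is an open disk $\{|z-b|<|a|\}$. The map $f$ sends it bijectively onto itself with $s_f(\vv)=0$, yet $f$ has no fixed point in $K$ at all.

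In fact the claim refutes itself. Applied to all but finitely many $\vv$, it would produce classical fixed points in infinitely many pairwise disjoint balls, while $f\neq\mathrm{id}$ has only $d+1$ of them. Your further assertion that such a point would lie in $X$ ``because the ball is id-indifferent near $y$'' is also unjustified: a strong neighborhood of $y$ contains no type~I point. Step~2 reduces to Step~1, and your leaf analysis presupposes both. So as written, the proposal only settles the case where $X$ has no id- or additively indifferent points.

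\textbf{What is missing is a global count, not a local search for a fixed point.} The id-indifferent region need not reach a classical fixed point in any prescribed direction. The paper proceeds as follows.
\begin{enumerate}
\item Remove the id- and additively indifferent points $I_X\cup A_X$. These contribute nothing to $\alpha(X)=\sum_{x\in X}(\deg_f(x)-1-\Ncf(f,x))$. Then examine each component $T$ of the remainder.
\item Using the Second and Third Indifference Lemmas, $T$ is a tree whose vertices are its $k\ge 1$ repelling type~II points. The valence at a vertex equals the number of fixed directions whose multiplier is neither $0$ nor $1$.
\item Since $\deg_f(x)+1-\Ncf(f,x)$ is the total multiplicity of the non-critical fixed directions at $x$, one gets
\[
  \sum_{x\in T}\bigl(\deg_f(x)-1-\Ncf(f,x)\bigr) = -2 + \sum_{x\in T} m_1(x),
\]
where $m_1(x)$ is the total multiplicity of the multiplier-one fixed directions at $x$.
\item Because $X$ is connected and $I_X\neq\varnothing$, each $T$ has a repelling vertex bordering $I_X$. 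There the direction into $I_X$ has multiplier $1$, hence multiplicity at least $2$.
\item So each such sum is $\ge 0$, giving $\alpha(X)\ge 0$. Theorem~A then puts at least two classical fixed points in $X$, a contradiction.
\end{enumerate}
Your own Step~2 identity $0=2+\alpha(X)$ is exactly what this lower bound contradicts, so you were one idea away. Your alternative suggestion, applying Lemma~\ref{lem:classical_fixed} at a single repelling point toward $y$, only moves the difficulty into the unknown sum over $X\cap B_x(\vv)^-$. Controlling that sum is precisely what the tree count does.

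Two smaller corrections:
\begin{enumerate}
\item The fact that indifferent components contain classical points comes from Proposition~\ref{prop:two_fixed_pts}, not Remark~\ref{rem:higher_type_fixed}.
\item A type~IV point has only one direction, so it cannot have valence~$2$. The correct statement is this: after base change, any fixed direction at $x_L$ other than the one pointing toward the rest of the Berkovich line is a disk with no $K$-rational point. By the First Identification Lemma, a type~IV fixed point is therefore id- or additively indifferent, so it must be excluded by the main argument rather than by a valence count.
\end{enumerate}
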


\begin{proof}
  Let $I_X$ and $A_X$ be the set of id-indifferent and additively indifferent
  fixed points of $X$, respectively. The Third Indifference Lemma shows that
  $I_X$ is an open subset of $X$, while the Second Indifference Lemma implies
  that $A_X$ lies in the boundary of $I_X$. Assume that $I_X \cup A_X =
  \varnothing$. Then $X$ consists of multiplicatively indifferent fixed points
  and repelling type~II fixed points. Every branch point $x \in X$ is a fixed
  point with at least~3 fixed directions, so $x$ must be repelling. By the
  Third Indifference Lemma, every leaf of $X$ must be repelling too, and the
  proposition follows.  Hence, let us assume that $I_X \cup A_X \ne
  \varnothing$. In particular, we must have $I_X \ne \varnothing$.

  Let $X' = X \smallsetminus (I_X \cup A_X)$. Let $T \subset X'$ be a connected
  component, and write $k$ for the number of repelling fixed points in $T$. By
  definition, $X'$ consists of multiplicatively indifferent fixed points and
  repelling type~II fixed points. The argument in the previous paragraph
  applied to $T$ instead of $X$ shows $T$ is a tree with $k$ repelling fixed
  points as vertices, and we have
  \[
    \sum_{x \in T} \left(\deg_f(x) - 1 - \Ncf(f,x)\right) 
    = -2k + \sum_{\substack{x \in T,\\ x \text{ repelling}}} \left(\deg_f(x) + 1 - \Ncf(f,x)\right).
    \]
  Write $\alpha(T)$ for the quantity on the left.  The quantity in the final
  sum is nonnegative, being the number of non-critical fixed directions at $x$,
  counted according to fixed-point multiplicity. As $T$ is a tree with $k$
  vertices, the sum of the valences at its vertices is $2(k-1)$. By the Second
  Indifference Lemma, the valence at a vertex $x \in T$ is precisely the number
  of non-critical simple fixed directions at $x$. Thus,
  \begin{align*}
    \alpha(T) &= -2k + 2(k-1) + \sum_{x \in T}
    \#\left\{\text{fixed directions at $x$ with multiplicity $> 1$}\right\} \\
    &= -2 + \sum_{x \in T}
    \#\left\{\text{fixed directions at $x$ with multiplicity $> 1$}\right\}.
  \end{align*}
  As $I_X \ne \varnothing$ and $X$ is connected, $T$ must contain a repelling
  type~II fixed point $x$ in the boundary of $I_X$. At $x$, a direction meeting
  $I_X$ is fixed with multiplicity at least~2. That is, $\alpha(T) \ge 0$.

  Summing $\alpha(T)$ over all connected components of $X'$ and applying
  Theorem~A, we find that $X$ contains at least~2 classical fixed points, a
  contradiction. We are forced to conclude that $X$ has no id-indifferent or
  additively indifferent fixed point, as desired. In particular, this means $X
  = T$, and so $X$ is the connected hull of its type~II repelling fixed points.
\end{proof}

\begin{corollary}
  If a component of $\Fix(f)$ contains an id-indifferent fixed point, then it
  contains a classical fixed point.
\end{corollary}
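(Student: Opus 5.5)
The claim is that a component $X$ of $\Fix(f)$ containing an id-indifferent fixed point also contains a classical fixed point. I will argue by contradiction. Suppose $X$ contains an id-indifferent point $x$ but no classical fixed point. Then by definition $X$ is a hyperbolic component of $\Fix(f)$.

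The key step is to apply Proposition~\ref{prop:hyperbolic_hull} directly. For a hyperbolic component, that proposition shows $X$ contains no id-indifferent or additively indifferent fixed point; more precisely, $X$ is the connected hull of its repelling type~II fixed points. This contradicts $x \in X$.

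One point to check is that ``id-indifferent'' in the statement refers to a non-classical point, so that it falls under the set $I_X$ used in the proof of Proposition~\ref{prop:hyperbolic_hull}. This is automatic: id-indifference is defined through the tangent map $T_x f$, which only makes sense at points of type~II (or, after base change to a spherically complete $L$, at points of type~III and~IV). So $x$ is non-classical. If $x$ has type~III or~IV, I would use the extension to $\BerkL$ described before the Third Indifference Lemma; alternatively, I would note that the proposition's conclusion excludes id-indifferent points of every type, since $X$ consists only of repelling type~II points and multiplicatively indifferent points.

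I expect no real obstacle: the corollary is an immediate restatement of the ``in particular'' clause of Proposition~\ref{prop:hyperbolic_hull}. The substantive work, namely the count showing $\alpha(T) \ge 0$ and the resulting contradiction with Theorem~A, has already been done there.
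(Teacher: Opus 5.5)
Your proposal is correct and matches the paper: the corollary is stated without separate proof as the contrapositive of the ``in particular'' clause of Proposition~\ref{prop:hyperbolic_hull}, since a component with no classical fixed point is by definition hyperbolic. Your side remark about type~III and~IV points is harmless extra care and is not needed for the argument as the paper presents it.
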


We are now in a position to give a simple proof that there are finitely many
non-classical components of $\Fix(f)$. Write $\Gamma_\Fix$ for the connected
hull of the classical fixed points of a rational function $f \in K(z)$.

\begin{corollary}
  \label{cor:components_meet_GammaFix}
  Let $f \in K(z)$ be a nonconstant rational function. Each component of
  $\Fix(f)$ contains a leaf or a branch point of $\Gamma_\Fix$. In particular,
  $\Fix(f)$ has finitely many components.
\end{corollary}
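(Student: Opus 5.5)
We split according to whether the component $X$ contains a classical fixed point.

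\emph{Components with a classical fixed point.} Suppose $X$ contains a classical fixed point $x$. If $X=\{x\}$ is a classical component, then $x$ is itself a leaf of $\Gamma_\Fix$ or a point of it (when $\Gamma_\Fix$ is a single point, $x$ is that point). If $X$ is non-classical, Theorem~A together with Proposition~\ref{prop:two_fixed_pts} shows that $X$ contains classical fixed points. Any classical fixed point is a leaf of $\Gamma_\Fix$, since a type~I point has valence at most~1 in any tree. So every component containing a classical fixed point contains a leaf of $\Gamma_\Fix$.

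\emph{Hyperbolic components.} Let $X$ be hyperbolic. By Proposition~\ref{prop:hyperbolic_hull}, $X$ is the hull of its finitely many repelling type~II points, so it contains a repelling type~II point $x$.

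\begin{itemize}
  \item[(1)] At $x$ there is a classical fixed point in at least two, in fact three, distinct directions. If $\deg_f(x)\ge2$, then $T_xf$ has $\deg_f(x)+1\ge3$ fixed directions counted with multiplicity. By Rumely's First Identification Lemma ($T_xf\ne\mathrm{id}$ since $x$ is repelling), each fixed direction $\vv$ satisfies $F_f(\vv)\ge \tilde F_f(\vv)>0$. If $T_xf$ has only one or two distinct fixed directions, I would supplement with surplus: since $\deg_f(x)<d$ is possible, use bad directions, or shearing directions as in Lemma~\ref{lem:big_intersection}.
  \item[(2)] Hence $x$ lies in $\Gamma_\Fix$ with valence at least~2.
  \item[(3)] Aim for valence $\ge3$ at some point of $X\cap\Gamma_\Fix$. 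Since $X\cap\Gamma_\Fix$ is a subtree of the finite tree $\Gamma_\Fix$ and contains no classical point, if it had no branch point of $\Gamma_\Fix$ it would be a segment of an edge of $\Gamma_\Fix$.
  \item[(4)] Rule out that case by counting. Walking along this edge segment, directions off the edge contain no classical fixed points. Then apply Lemma~\ref{lem:classical_fixed} to directions leaving $X$ along $\Gamma_\Fix$. This gives $F_f(\vv,X)=0$ for directions pointing into $X$ from the two ends, while the direction-counting identity from the proof of Proposition~\ref{prop:hyperbolic_hull}, with $\alpha(X)=-2$, forces a contradiction with the fixed directions available at the repelling vertices.
\end{itemize}

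More concretely for (4): at an endpoint $x$ of the segment that is repelling, all fixed directions except the one along the edge contain no classical fixed point. Each such direction is then a critically fixed direction with $F_f(\vv)=0$, which is impossible since $\tilde F_f(\vv)\ge1$ for a fixed direction. So $T_xf$ has at most the two edge directions fixed, both simple. This gives $\deg_f(x)+1\le2$, contradicting repelling.

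\emph{Expected obstacle and finiteness.} The main obstacle is the bookkeeping at endpoints of $X\cap\Gamma_\Fix$ that are not repelling, namely multiplicatively indifferent leaves. There I would push the endpoint to the nearest repelling point using the Third Indifference Lemma and Proposition~\ref{prop:hyperbolic_hull}. Finiteness then follows because $\Gamma_\Fix$ is a finite tree with finitely many leaves and branch points, and distinct components are disjoint.
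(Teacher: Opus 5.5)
\textbf{There is a genuine gap.} It sits at the step you leave open in (1) and then assume in (4). Two of your claims need a fact you never establish:
\begin{itemize}
  \item in (1), that $x$ has classical fixed points in three distinct directions;
  \item in your concrete version of (4), that the two edge directions are ``both simple.''
\end{itemize}
The fact needed is that every fixed direction of $T_xf$ at a repelling point $x$ of a hyperbolic component has fixed-point multiplicity one. Without it, the $\deg_f(x)+1$ fixed points of $T_xf$ could be concentrated in one or two directions, and no contradiction arises.

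Your proposed fallback does not supply this. Bad directions need not exist: there are none when $\deg_f(x)=d$, as for $z\mapsto z^p$ at $\zeta_{0,1}$ in the paper's simple hyperbolic example. Nor is anything forcing a shearing direction at $x$.

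The missing idea is short:
\begin{itemize}
  \item A fixed direction of multiplicity $\ge 2$ for the reduction $T_xf$ has multiplier $\tilde\lambda=1$, so it is not critical.
  \item The Second Indifference Lemma then produces an arc of fixed points in that direction whose type~II points have tangent map $z\mapsto z$.
  \item These are id-indifferent points of $X$, contradicting Proposition~\ref{prop:hyperbolic_hull}.
  \item Critically fixed directions have multiplier $0$, so they are simple automatically.
\end{itemize}

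With this in hand the proof is immediate, and your steps (3)--(4) are not needed. Take any repelling $x\in X$. The map $T_xf$ has $\deg_f(x)+1\ge 3$ fixed points counted with multiplicity. All are simple, so there are at least three distinct fixed directions. Each contains a classical fixed point by Rumely's First Identification Lemma, which applies because $\deg_f(x)\ge 2$ forces $T_xf\ne\mathrm{id}$. Hence $x$ itself is a branch point of $\Gamma_\Fix$, which is exactly the paper's argument.

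So the reduction to an edge segment, the $\alpha(X)=-2$ bookkeeping, and your worry about multiplicatively indifferent endpoints are unnecessary. The local count works at every repelling point of $X$, not only at endpoints of $X\cap\Gamma_\Fix$. Your treatment of components containing a classical fixed point, and the finiteness deduction from $\Gamma_\Fix$ being a finite tree, are fine.
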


\begin{proof}
  Let $X$ be a component of $\Fix(f)$. The leaves of $\Gamma_\Fix$ are the
  classical fixed points, so $X$ contains a leaf of $\Gamma_\Fix$ if and only
  if $X$ is not hyperbolic. Suppose that $X$ is hyperbolic, and let $x$ be a
  repelling point of $X$. By Proposition~\ref{prop:hyperbolic_hull}, there is
  no id-indifferent point in $X$. By the Second Indifference Lemma, the fixed
  point multiplicity of any fixed direction at $x$ is one. As $x$ is repelling,
  there are at least three fixed directions at $x$. By Rumely's First
  Identification Lemma, $x$ is a branch point of $\Gamma_{Fix}$. The final
  statement of the lemma follows from the fact that $\Gamma_\Fix$ is a finite
  tree, and hence has finitely many leaves and branch points.
\end{proof}

The local degrees at the type~II repelling fixed points of a hyperbolic
component must satisfy a strong compatibility condition. 

\begin{proposition}
  \label{prop:hyperbolic_formula}
  Let $f \in K(z)$ be a nonconstant rational function, and suppose that $X$ is
  a hyperbolic component of $\Fix(f)$. Let $x_1, \ldots, x_n$ be the type~II
  repelling fixed points in $X$. We have
  \[
  \sum_{i=1}^n \deg_f(x_i) = n - 1 \quad \text{in $\tilde K$}.
  \]
\end{proposition}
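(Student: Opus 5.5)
The plan is to apply the holomorphic fixed-point index formula to the tangent maps $T_{x_i} f$ over $\tilde K$. We sum the resulting identities over the vertices of the tree $X$. The contributions from the two ends of each edge should then combine via the Fourth Indifference Lemma to give exactly~1.

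\emph{Combinatorial structure of $X$.} By Proposition~\ref{prop:hyperbolic_hull}, $X$ is the connected hull of $x_1,\ldots,x_n$, and $X$ contains no id-indifferent or additively indifferent point. As in the first paragraph of that proof, every branch point and every leaf of $X$ is repelling. Hence $X$ is a finite tree with vertex set $\{x_1,\ldots,x_n\}$ and $n-1$ edges, and each open edge consists of multiplicatively indifferent points.

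\emph{Fixed directions at a vertex.} Write $d_i=\deg_f(x_i)\ge 2$ and $v_i$ for the valence of $x_i$ in $X$. Let $\vv\in T_{x_i}$ be a fixed direction.
\begin{itemize}
\item If $\vv$ is critically fixed, its multiplier is $0$, so it is a simple fixed point of $T_{x_i}f$.
\item If $\vv$ is fixed and non-critical, with multiplier $\tilde\lambda\ne 0$, the Second Indifference Lemma gives an arc of indifferent fixed points in $B_{x_i}(\vv)^-$, whose tangent maps are $z\mapsto\tilde\lambda z$. This arc lies in $X$, so $\vv$ points along an edge. Since $X$ has no id-indifferent points, $\tilde\lambda\ne 1$, and $\vv$ is again a simple fixed point.
\item Conversely, every direction along an edge is fixed and non-critical.
\end{itemize}
So $T_{x_i}f$, a rational map of degree $d_i\ge 2$ over $\tilde K$, has $d_i+1$ simple fixed points. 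These are $\Ncf(f,x_i)=d_i+1-v_i$ critical ones and $v_i$ edge directions with multipliers $\lambda_{i,e}\ne 0,1$.

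\emph{Applying the index formula.} For a rational map of degree at least~2 whose fixed points are all simple, $\sum 1/(1-\lambda)=1$ over its fixed points. This is a residue computation for the differential $dz/(z-g(z))$, valid over any algebraically closed field. Applied to $T_{x_i}f$ it gives
\[
(d_i+1-v_i)+\sum_{e\ni x_i}\frac{1}{1-\lambda_{i,e}}=1
\quad\text{in }\tilde K,
\qquad\text{i.e.}\qquad
d_i-v_i+\sum_{e\ni x_i}\frac{1}{1-\lambda_{i,e}}=0 .
\]
Now sum over $i$. We have $\sum_i v_i=2(n-1)$. For an edge $e$ joining $x_i$ and $x_j$, the Fourth Indifference Lemma gives $\lambda_{j,e}=\lambda_{i,e}^{-1}$, so with $\lambda=\lambda_{i,e}$,
\[
\frac{1}{1-\lambda}+\frac{1}{1-\lambda^{-1}}=\frac{1}{1-\lambda}-\frac{\lambda}{1-\lambda}=1 .
\]
Therefore $\sum_i d_i-2(n-1)+(n-1)=0$, that is, $\sum_i d_i=n-1$ in $\tilde K$.

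\emph{Expected obstacle.} The hardest step is justifying the index formula in positive residue characteristic. I would verify it directly: compute the residues of $dz/(z-g(z))$, placing one fixed point at $\infty$ if needed, and use that the residues of a rational differential sum to zero. Simplicity of every fixed point is what makes each residue equal $1/(1-\lambda)$. The degenerate case $n=1$ needs no edges: the formula gives $d_1+1=1$, i.e.\ $d_1=0$ in $\tilde K$, which is consistent with every fixed direction being critical.
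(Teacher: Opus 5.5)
Your proposal is correct and follows the paper's proof essentially step for step. The paper likewise treats $X$ as a finite tree on the vertices $x_1,\ldots,x_n$ and applies the fixed-point index formula to each $T_{x_i} f$, which has $\deg_f(x_i)+1-v(x_i)$ critically fixed directions; it then uses the Fourth Indifference Lemma to show that the two contributions $1/(1-\lambda_{x,e}) + 1/(1-\lambda_{y,e})$ of each edge sum to $1$. Your extra checks fill in details the paper covers only by citing Silverman: that every fixed direction is simple, and that the index formula holds over $\tilde K$ in positive characteristic via residues of $dz/(z-g(z))$.
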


\begin{proof}
  For the duration of this proof, we write $\equiv$ for the equality relation
  in $\tilde K$.

  By Proposition~\ref{prop:hyperbolic_hull}, $X$ is the connected hull of $x_1,
  \ldots, x_n$, and $X$ contains no id-indifferent fixed point. Note that if
  $y$ is a branch point of $X$, then it has at least~3 fixed directions. It
  follows that $y$ is type~II and repelling. Thus, we can view $X$ as a graph with
  vertex set $x_1, \ldots, x_n$. Write $v(x_i)$ for the valence of $x_i$ in
  $X$.

  Let $x = x_i$ be a vertex. The map $T_x f$ has degree $\deg_f(x)$, and
  consequently $\deg_f(x) + 1$ fixed points. Each non-critically fixed
  direction at $x$ corresponds to an edge $e$ of $X$; write $\lambda_{x,e} \in
  \tilde K^\times$ for the associated fixed-point multiplier of $T_x f$. As $X$
  contains no id-indifferent fixed point, the Second Indifference Lemma implies
  that $\lambda_{x,e} \ne 1$. The number of critically fixed directions at $x$
  is $\deg_f(x) + 1 - v(x)$. The fixed-point index formula
  \cite[Thm.~1.14]{Silverman_Dynamics_Book_2007} for $T_x f$ yields
  \begin{equation}
    \label{eq:vertex_index}
     \deg_f(x) + 1 - v(x) + \sum_{e \ni x} \frac{1}{1-\lambda_{x,e}} \equiv 1. 
  \end{equation}

  Now let $e$ be an edge of $T$, and let $x,y$ be the two vertices incident on
  $e$. The Fourth Indifference Lemma shows that $\lambda_{x,e} \, \lambda_{y,e}
  \equiv 1$. It follows that
  \begin{equation}
    \label{eq:index_reciprocity}
    \frac{1}{1-\lambda_{x,e}} + \frac{1}{1- \lambda_{y,e}} \equiv 1.
  \end{equation}
  Summing \eqref{eq:vertex_index} over all vertices $x = x_i$ gives
  \[
     \sum_{i=1}^n \deg_f(x_i) - \sum_{i=1}^n v(x_i) + \sum_e \sum_{x_i \in e}
     \frac{1}{1-\lambda_{x_i,e}} \equiv 0.
  \]
  As $T$ is a tree with $n$ vertices, it has $n-1$ edges. Applying
  \eqref{eq:index_reciprocity} and noting that the sum of all valences of all
  vertices is twice the number of edges, we obtain
  \[
  \sum_{i=1}^n \deg_f(x_i) - 2(n-1) + \sum_e 1
  = \sum_{i=1}^n \deg_f(x_i) - (n-1) \equiv 0. \qedhere
   \]
\end{proof}

\begin{proposition}
  \label{prop:connected_fixed_locus}
  The fixed locus of a rational function $f$ of degree $d \geq 2$ is connected if
  and only if
  \[
    \sum_{\substack{x \text{ type~II} \\f(x) = x}} \left(\deg_f(x) - 1 - \Ncf(f,x)\right) = d-1.
  \]
\end{proposition}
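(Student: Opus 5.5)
We aim to sum Theorem~A over all non-classical components and compare the total with the $d+1$ classical fixed points. Write $\mathcal{C}$ for the set of classical components of $\Fix(f)$ and $\mathcal{N}$ for the set of non-classical components. By Remark~\ref{rem:sum_over_all_pts}, the summand $\deg_f(x)-1-\Ncf(f,x)$ vanishes at every non-repelling fixed point. Type~III and~IV fixed points are indifferent, and type~II indifferent points have $\deg_f = 1$ and no critical directions. So the sum in the statement equals the sum over type~II repelling fixed points. Each such point lies in a peaked component, hence in some member of $\mathcal{N}$. Applying Theorem~A to each $X \in \mathcal{N}$ and adding, we get
\[
  \sum_{X \in \mathcal{N}} \#\{\text{classical fixed points in } X\} = 2|\mathcal{N}| + \Sigma,
\]
where $\Sigma$ is the sum in the statement.

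Every classical fixed point lies either in a classical component, each of which is a single simple fixed point, or in a non-classical component. Since $f$ has exactly $d+1$ classical fixed points counted with multiplicity, this gives the identity
\[
  d+1 = |\mathcal{C}| + 2|\mathcal{N}| + \Sigma .
\]
A classical component is an attracting or repelling fixed point (Proposition~\ref{prop:components}), so its multiplier is not~1 and its fixed-point multiplicity is~1. This is what lets us count each member of $\mathcal{C}$ once.

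The condition $\Sigma = d-1$ is therefore equivalent to $|\mathcal{C}| + 2|\mathcal{N}| = 2$. It remains to show that, for $d\ge2$, this holds exactly when $\Fix(f)$ is connected. If $\Fix(f)$ is connected, it has one component. That component cannot be classical, because it would then be a single point carrying only one of the $d+1\ge3$ classical fixed points. So $|\mathcal{C}|=0$, $|\mathcal{N}|=1$, and the equation holds. Conversely, suppose $|\mathcal{C}|+2|\mathcal{N}|=2$. Either $|\mathcal{N}|=1$ and $|\mathcal{C}|=0$, in which case $\Fix(f)$ has exactly one component, or $|\mathcal{N}|=0$ and $|\mathcal{C}|=2$. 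The second case would make $d+1 = 2 + \Sigma$ with $\Sigma$ an empty sum (there are no type~II repelling points without non-classical components), so $d=1$. That contradicts $d\ge2$.

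The main obstacle is the bookkeeping: confirming that every classical fixed point is accounted for exactly once, and that classical components carry multiplicity one. The multiplicity claim needs a short check that an attracting or repelling classical fixed point has $\lambda\ne1$, hence is a simple root of the fixed-point equation. We must also make sure that $\mathcal{N}$ is finite (Corollary~\ref{cor:components_meet_GammaFix}) so the sums make sense, and that $\Sigma$ is finite by Theorem~A.
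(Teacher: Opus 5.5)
Your proof is correct and follows essentially the same route as the paper. Both sum Theorem~A over the finitely many non-classical components and add the isolated classical fixed points to get $d+1 = |\mathcal{C}| + 2|\mathcal{N}| + \Sigma$, then rule out every case except $|\mathcal{N}|=1$, $|\mathcal{C}|=0$; your explicit check that attracting or repelling classical fixed points have multiplicity one supplies a point the paper uses without comment.
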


\begin{proof}
  Let $X_1, \ldots, X_n$ be all of the non-classical components of $\Fix(f)$
  (Corollary~\ref{cor:components_meet_GammaFix}), and let $y_1, \ldots, y_m$ be
  the isolated classical fixed points for $f$. The number of classical fixed
  points in some $X_i$ is therefore $d + 1 - m$. Summing the formula in
  Theorem~A over all components $X_i$ gives
  \begin{align*}
    d + 1 - m &= \sum_{i=1}^n \Big(2 + \sum_{\substack{x \in X \\ \text{type~II repelling}}} \left(\deg_f(x) - 1 - \Ncf(f,x)\right)\Big) \\
    &= 2n + \sum_{\substack{x \text{ type~II} \\f(x) = x}} \left(\deg_f(x) - 1 - \Ncf(f,x)\right);
  \end{align*}
  the switch to a sum over all type~II fixed points is justified by
  Remark~\ref{rem:sum_over_all_pts}. Thus, we have the formula
  \begin{equation}
    \label{eq:thmA_all_comps}
    \sum_{\substack{x \text{ type~II} \\f(x) = x}} \left(\deg_f(x) - 1 - \Ncf(f,x)\right) = d + 1 - m - 2n. 
  \end{equation}

  Suppose that $\Fix(f)$ is connected. Then there cannot be an isolated fixed
  point, and there must be only one non-classical component. That is, $m = 0$
  and $n = 1$, and the result follows from \eqref{eq:thmA_all_comps}.

  Conversely, suppose that the sum on the left side of
  \eqref{eq:thmA_all_comps} equals $d-1$. Simplifying \eqref{eq:thmA_all_comps}
  shows that $2n - 2 + m = 0$. If $n = 0$, then $f$ has only $m = 2$ classical
  fixed points, a contradiction. If $n > 1$, then $2n - 2 + m > 0$, another
  contradiction. So we must have $n = 1$ and $m = 0$, which is equivalent to
  saying that $\Fix(f) = X_1$ is connected.
\end{proof}



We close this section with some examples of rational functions with hyperbolic
components. 

\begin{example}[Simple hyperbolic component]
  Suppose that $K$ has residue characteristic $p > 0$. Fix a degree $d \ge p$
  and an element $t \in K$ with $|t| < 1$. Set
  \[
  f(z) = \begin{cases}
    z^d & \text{ if $d = p$} \\
    tz^d + z^p & \text{ if $d > p$}.
  \end{cases}
  \]
  Then $x = \zeta_{0,1}$ is fixed, and every fixed direction at $x$ is
  critically fixed. It follows that $\{x\}$ is a hyperbolic component of $\Fix(f)$. 
\end{example}

\begin{example}[Hyperbolic segment]
  Again, suppose that $K$ has residue characteristic $p > 2$. Fix a degree $d
  \ge p+1$, and fix an element $t \in K$ with $|t| < 1$. Define
  \[
    f(z) = \frac{t z^{d-p+2} + 2z^2}{-t^{2p-1} z^d + 2 tz^{d-p+1} + 4z - 2}.
  \]
  One can prove directly that $f$ has degree~$d$ by substituting a nonzero root
  of the numerator into the denominator and verifying that the result is
  nonzero. Now
  \[
     \tilde f(z) = \frac{z^2}{2z-1},
  \]
  which has a unique non-critically fixed point at $\infty$. If we define
  $g(z) = t^2 f(z/t^2)$, then 
  \[
     \tilde g(z) = \frac{z}{2 - z^{p-1}},
  \]
  which has a unique non-critically fixed point at $0$. One verifies that the
  reduction of $f_u(z) = u^{-1}f(uz)$ is $\frac{1}{2} z$ for $1 < |u| <
  |t|^{-2}$. It follows that the entire segment $[\zeta_{0,1},
    \zeta_{0,|t|^{-2}}]$ consist of fixed points. From the calculations above,
  there is a small strong neighborhood of this segment that contains no other
  fixed point. Hence, the arc $[\zeta_{0,1}, \zeta_{0,|t|^{-2}}]$ forms a
  hyperbolic component of $\Fix(f)$.
\end{example}



\section{Crucial points and fixed points}
\label{sec:crucial_weight}

Let $x \in \BerkK$ be a fixed point for a nonconstant rational function $f \in
K(z)$. Recall that a direction $\vv \in T_x$ is a \textbf{shearing direction}
for $f$ if there exists a classical fixed point of $f$ in the direction $\vv$,
but $T_x f (\vv) \ne \vv$. We write $\Nshear(f,x)$ for the number of shearing
directions at $x$. Rumely's \textbf{crucial weight} is the function $\wR \colon
\BerkK \to \NN$ defined as follows:
\begin{enumerate}
\item If $x \in \PP^1(K)$,  then $\wR(x) = 0$.
\item If $x$ is of type~II, III, or IV and is fixed by $f$, then $\wR(x) =
  \deg_f(x) - 1 + \Nshear(f,x)$.
\item If $x$ is of type~II, III, or IV and is not fixed by $f$, write $v(x)$
  for the number of directions in $T_x$ that contain a type~I fixed point. Then
     $\wR(x) = \max\{0, v(x) - 2\}$. 
\end{enumerate}
A point $x \in \BerkK$ with $\wR(x) > 0$ is called a \textbf{crucial
  point}.

For a component $X$ of $\Fix(f)$, we define
\[
  \alpha(X) = \sum_{x \in X} \left(\deg_f(x) - 1 - \Ncf(f,x)\right).
  \]
By Theorem~A, the sum is finitely supported, and a non-classical component $X$
contains $2 + \alpha(X)$ classical fixed points.

\begin{lemma}
  \label{lem:alpha_sum}
  Let $f \in K(z)$ be a rational function of degree $d \ge 1$. Write $c$ for
  the number of attracting or repelling classical fixed points of $f$, and let
  $X_1,\ldots, X_n$ be a sequence of non-classical components of
  $\Fix(f)$. Suppose that every non-hyperbolic component is among the
  $X_i$. Then
  \[
     \sum_{i=1}^n \alpha(X_i) = d+1 - c - 2n.
  \] 
\end{lemma}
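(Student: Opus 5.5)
The plan is to count classical fixed points with multiplicity in two ways, essentially repeating the computation in the proof of Proposition~\ref{prop:connected_fixed_locus}. The extra point is that the list $X_1,\ldots,X_n$ may include some, but not necessarily all, hyperbolic components. By the standard count, $f$ has exactly $d+1$ classical fixed points with multiplicity. By Lemma~\ref{lem:type_I} and Proposition~\ref{prop:components}, each attracting or repelling classical fixed point is isolated in $\Fix(f)$ and forms a classical component. Such a point has multiplier $\lambda \ne 1$, so it is a simple fixed point. Hence the classical components contribute exactly $c$ to the count. Every other classical fixed point is indifferent and therefore lies in a non-classical component, and such a component is by definition non-hyperbolic. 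By hypothesis, every non-hyperbolic component is among the $X_i$. Therefore the classical fixed points lying in $X_1 \cup \cdots \cup X_n$ number exactly $d+1-c$, counted with multiplicity. Here we use that the $X_i$ are distinct components, hence pairwise disjoint; I will read ``sequence'' as a list of distinct components.

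Next, apply Theorem~A to each $X_i$. It says that $X_i$ contains exactly $2 + \alpha(X_i)$ classical fixed points, where $\alpha(X_i)$ is a finite sum by the theorem and Remark~\ref{rem:sum_over_all_pts}. This also holds when $X_i$ is hyperbolic: then $2 + \alpha(X_i) = 0$, so including hyperbolic components in the list changes neither side of the identity. Summing over $i$ gives
\[
  d + 1 - c = \sum_{i=1}^n \bigl(2 + \alpha(X_i)\bigr) = 2n + \sum_{i=1}^n \alpha(X_i),
\]
which rearranges to the claimed formula.

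I expect no serious obstacle. The one point needing care is that the classical components contribute exactly $c$ to the multiplicity count. This requires that attracting and repelling classical fixed points have multiplicity one, which holds because multiplicity greater than one forces $\lambda = 1$. It also requires that indifferent classical fixed points never form classical components, which Lemma~\ref{lem:type_I} provides.
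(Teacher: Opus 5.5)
Your proof is correct and follows the paper's own argument, which likewise sums Theorem~A over the $X_i$ and adds $c$ to recover the total count $d+1$ of classical fixed points. Your extra remarks spell out points the paper leaves implicit: attracting and repelling classical fixed points are simple, hyperbolic components contribute $2+\alpha(X_i)=0$, and the $X_i$ must be distinct.
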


\begin{proof}
  Sum the formula in Theorem~A for $X_1, \ldots, X_n$ and add $c$. This gives
  the total number of classical fixed points of $f$, counted according to
  fixed-point multiplicity, which is $d+1$.
\end{proof}

We recall the main result of this section:

\begin{theorem*}[Rumely's Weight Formula {\cite[Thm.~6.1]{Rumely_new_equivariant}}]
  If $f \in K(z)$ is a rational function of degree $d \ge 1$, then
  \[
    \sum_{x \in \BerkK} \wR(x) = d - 1.
  \]
\end{theorem*}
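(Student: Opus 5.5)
The plan is to prove the Weight Formula by an Euler-characteristic count on the finite tree $\Gamma_\Fix$, the connected hull of the classical fixed points. The inputs are Theorem~A, in the packaged form of Lemma~\ref{lem:alpha_sum}, and Corollary~\ref{cor:components_meet_GammaFix}.

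\textbf{Step 1: only $\Gamma_\Fix$ and $\Fix(f)$ carry weight.} First I record where $\wR$ can be nonzero.
\begin{itemize}
  \item If $x \notin \Gamma_\Fix$ and $x$ is not fixed, then every classical fixed point lies in the single direction at $x$ pointing toward $\Gamma_\Fix$. So $v(x)\le 1$ and $\wR(x)=0$.
  \item If $x\notin\Gamma_\Fix$ is fixed, it lies in some component $X$, and $X$ meets $\Gamma_\Fix$ by Corollary~\ref{cor:components_meet_GammaFix}. Hence the direction at $x$ toward $\Gamma_\Fix$ contains an arc of $X$, so it is fixed and non-critical. Every other direction at $x$ contains no classical fixed point.
  \item Consequently, at such $x$ we have $\Nshear(f,x)=0$. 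We also have $\Ncf(f,x)=0$, because a critically fixed direction contains a classical fixed point by the First Identification Lemma.
  \item So for every fixed non-classical $x$, $\wR(x)=\deg_f(x)-1+\Nshear(f,x)$, and off $\Gamma_\Fix$ this equals $\deg_f(x)-1-\Ncf(f,x)$.
\end{itemize}

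\textbf{Step 2: the collapsed tree $G$.} I build a finite tree $G$ from $\Gamma_\Fix$ by collapsing each set $X\cap\Gamma_\Fix$ to a single vertex, for each non-classical component $X$. Each such set is a connected subtree, being the intersection of two connected sets in a uniquely arcwise connected space.
\begin{itemize}
  \item There are finitely many such components, say $X_1,\dots,X_n$; this list includes the hyperbolic ones, and each meets $\Gamma_\Fix$.
  \item The other vertices of $G$ are the $c$ attracting or repelling classical fixed points, which are leaves of valence~$1$, and the non-fixed branch points of $\Gamma_\Fix$.
  \item A non-fixed vertex $x$ has valence $v(x)$ in $G$, and $\wR(x)=v(x)-2$. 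Non-fixed points of valence $2$ contribute $0=v-2$, so they can be ignored.
  \item For a tree, $\sum_{\text{vertices}}(\mathrm{val}-2)=-2$. This also covers the degenerate one-vertex case, e.g.\ $f(z)=z+1$.
\end{itemize}

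\textbf{Step 3: the valence of a collapsed vertex (main obstacle).} The key identity, and the step I expect to need the most care, is
\[
\mathrm{val}_G(X)=\sum_{x\in X}\big(\Nshear(f,x)+\Ncf(f,x)\big).
\]
Edges of $G$ at the vertex $X$ correspond to directions at points $x\in X$ that leave $X$ and contain a classical fixed point. I would classify such a direction $\vv$ by how $T_xf$ acts on it.
\begin{itemize}
  \item If $\vv$ is not fixed, it is by definition shearing.
  \item It cannot be fixed and non-critical. By the Second Indifference Lemma it would then contain an arc of fixed points, hence points of $X$.
  \item If it is critically fixed, it contains a classical fixed point by the First Identification Lemma, and it leaves $X$ because $f$ expands along it.
\end{itemize}
One must also check that distinct such directions give distinct edges. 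This should follow since $X$ is connected and $\Gamma_\Fix$ is a tree. Only finitely many points contribute, by Theorem~A and the finiteness of $\Gamma_\Fix$.

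\textbf{Step 4: the count.} Combining Steps 1 and 3,
\[
\sum_{x\in X}\wR(x)=\alpha(X)+\mathrm{val}_G(X).
\]
The tree identity gives
\[
\sum_{\text{non-fixed}}\wR=-2+c-\sum_i\big(\mathrm{val}_G(X_i)-2\big).
\]
Adding the two, the total weight is
\[
-2+c+2n+\sum_i\alpha(X_i).
\]
By Lemma~\ref{lem:alpha_sum}, applied to all non-classical components, $\sum_i\alpha(X_i)=d+1-c-2n$. Substituting, the total weight equals $d-1$.
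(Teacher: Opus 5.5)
Your proposal is correct and follows essentially the paper's route. You collapse each non-classical component to a vertex of a finite tree built on $\Gamma_\Fix$, identify that vertex's valence as $\sum_{x\in X}\bigl(\Nshear(f,x)+\Ncf(f,x)\bigr)$ by sorting directions into shearing, critically fixed, and non-critically fixed ones, and combine the handshake count with Lemma~\ref{lem:alpha_sum}. The differences are cosmetic: you collapse $X\cap\Gamma_\Fix$ inside $\Gamma_\Fix$ and use $\sum(\mathrm{val}-2)=-2$ instead of collapsing $X_i$ inside the hull $\tilde\cT$ and counting edges, and your Step~1 spells out why every crucial point lies in $\Gamma_\Fix\cup\Fix(f)$, which the paper states with little comment.
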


\begin{proof}
  Let $p_1, \ldots, p_c$ be the attracting or repelling classical fixed points
  of $f$. Let $b_1, \ldots, b_r$ be the branch points of the graph
  $\Gamma_\Fix$ that are not fixed by $f$; these are precisely the non-fixed
  points of $\BerkK$ with positive crucial
  weight. Corollary~\ref{cor:components_meet_GammaFix} asserts that there are
  finitely many components of $\Fix(f)$; let $\{X_1, \ldots, X_n\}$ be all of
  the non-classical components.

  Define a real tree $\cT$ as follows. Let $\tilde \cT \subset \BerkK$ be the
  connected hull of
  \[
   \{p_1, \ldots, p_c, b_1, \ldots, b_r\} \cup \bigcup_{i=1}^n X_i.
  \]
  Define $\cT$ to be the quotient of $\tilde \cT$ given by collapsing each
  $X_i$ to a point, which we call $x_i$. Define
  \[
    V(\cT) = \{x_1, \ldots, x_n, p_1, \ldots, p_c, b_1, \ldots, b_r\} \subset \cT.
  \]
  Every classical fixed point is in $\tilde cT$, so $\Gamma_\Fix \subset \tilde
  cT$, and $\cT$ agrees with the corresponding quotient of $\Gamma_\Fix$. By
  Corollary~\ref{cor:components_meet_GammaFix}, every leaf and branch point of
  $\Gamma_\Fix$ descends to an element of $V(\cT)$ under the quotient, so
  $V(\cT)$ is a vertex set for $\cT$.

  We now compute the valence of each vertex.  At a type~II point $x \in X_i$,
  we can partition the directions in $T_x$ into four sets:
  \begin{itemize}
    \item shearing directions: these contain a classical fixed point by
      definition;
    \item critically fixed directions: these contain a classical fixed point by
      Rumely's First Identification Lemma;
    \item non-critically fixed directions: these contain an arc internal to
      $X_i$; and
    \item non-fixed non-shearing directions: these contain no classical fixed
      point and no arc internal to $X_i$.
  \end{itemize}
  Consequently, the valence of $X_i$ as a vertex of $\cT$ is
  \begin{align*}
    v(X_i) &= \sum_{x \in X_i} \left(\Nshear(f,x) + \Ncf(f,x)\right) \\ &=
    \sum_{x \in X_i} \left( \deg_f(x)-1 + \Nshear(f,x)\right) - \left(\deg_f(x)
    - 1 - \Ncf(f,x)\right) \\ &= \sum_{x \in X_i} \wR(x) - \alpha(X_i).
  \end{align*}
  Evidently, the valences of the other vertices in $\cT$ are given by:
  \[
    v(p_i) = 1 \qquad \text{and} \qquad v(b_i) = 2 + \wR(b_i).
  \]

  The number of edges in the tree $\cT$ is $n + c + r - 1$. The sum of the
  valences at all vertices of a graph is twice the number of edges, so we
  obtain
  \begin{align*}
    2(n+c+r-1) &= c + \sum_{i=1}^n \left( \sum_{x \in X_i} \wR(x) -
    \alpha(X_i)\right) + \sum_{i=1}^r \left(2 + \wR(b_i)\right) \\ &= c + 2r +
    \sum_{x \in S} \wR(x) - \sum_{i=1}^n \alpha(X_i),
  \end{align*}
  where $S = \{b_1, \ldots, b_r\} \cup \bigcup_{i=1}^n X_i$. Substituting the
  expression in Lemma~\ref{lem:alpha_sum} and massaging shows that
  \begin{equation*}
    \label{eq:maybe_partial_sum}
    \sum_{x \in S} \wR(x) = d-1.
  \end{equation*}
  By definition of the crucial weight, every crucial point lies in $S$. 
\end{proof}

We close this section with several consequences of Rumely's Weight Formula and
our results.

\begin{corollary}
    \label{cor:total_no_of_nonclassical_comp}
  Let $f \in K(z)$ be a nonconstant rational function of degree $d \ge
  2$. Every non-classical component of $\Fix(f)$ contains a crucial point. In
  particular, the number of non-classical components is bounded by $d-1$.
\end{corollary}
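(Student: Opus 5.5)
The second assertion follows from the first by Rumely's Weight Formula. Crucial weights are nonnegative integers and components are disjoint. So if each non-classical component contains a point of positive crucial weight, then the number of non-classical components is at most $\sum_x \wR(x) = d-1$. It therefore suffices to show that every non-classical component $X$ contains a crucial point. I will split into the three possibilities allowed by Proposition~\ref{prop:components}, using for the hyperbolic case the observation that a non-classical component is hyperbolic precisely when it contains no classical fixed point.

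\emph{Peaked components.} If $X$ is peaked, it contains a repelling type~II fixed point $x$. Then $\deg_f(x) \ge 2$, so $\wR(x) \ge \deg_f(x) - 1 \ge 1$ and $x$ is crucial. Hyperbolic components fall under this case, since by Proposition~\ref{prop:hyperbolic_hull} they are the hulls of their repelling type~II points and are in particular nonempty peaked components.

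\emph{Indifferent components.} This is the only case needing an argument, since every point $x \in X$ has $\deg_f(x) = 1$ and so $\wR(x) = \Nshear(f,x)$. We need a point of $X$ with a shearing direction. Because $d \ge 2$, $f$ has $d+1 \ge 3$ classical fixed points counted with multiplicity. By Proposition~\ref{prop:two_fixed_pts}, exactly two of them lie in $X$, so some classical fixed point $y$ lies outside $X$.

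Let $x$ be the point of $X$ closest to $y$ (Proposition~\ref{prop:closest_point}), and let $\vv \in T_x$ be the direction containing $y$. If $\vv$ were fixed by $T_x f$, then, since $x$ is indifferent and hence $\vv$ non-critical, the Second Indifference Lemma would give an arc of fixed points in $B_x(\vv)^-$ emanating from $x$. This would contradict the choice of $x$. The lemma needs $x$ of type~II, but if $x$ is type~III or IV we first extend scalars to the spherically complete field $L$ as in Section~\ref{sec:indifferent}, where every such point becomes type~II.

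Hence $\vv$ is a non-fixed direction containing the classical fixed point $y$, i.e.\ a shearing direction. So $\wR(x) \ge 1$, and $x$ is crucial. This is exactly the first step of the proof of Proposition~\ref{prop:indiff_inter}, which I would cite directly: it gives $X_{\sh} \neq \varnothing$.

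The only subtle step is the indifferent case, namely guaranteeing a classical fixed point outside $X$ and a shearing direction toward it. Both are supplied by Proposition~\ref{prop:two_fixed_pts} and the argument of Proposition~\ref{prop:indiff_inter}, so the proof should be short.
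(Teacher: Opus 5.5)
Your proof is correct and follows the paper's argument. A repelling type~II point of a peaked component has $\wR(x) \ge \deg_f(x) - 1 \ge 1$. An indifferent component contains a point with a shearing direction, namely the first step of Proposition~\ref{prop:indiff_inter}, which you reproduce, so that point has $\wR(x) = \Nshear(f,x) \ge 1$. Rumely's Weight Formula then bounds the count. Your detour through Proposition~\ref{prop:hyperbolic_hull} is unnecessary: by Proposition~\ref{prop:components}, every non-classical component is already indifferent or peaked.
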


\begin{proof}
  Let $X$ be a non-classical component of $\Fix(f)$. Suppose first that $X$ is
  a peaked component. Then it contains a repelling fixed point $x$, and $\wR(x)
  \ge \deg_f(x) - 1 \ge 1$.  Now suppose that $X$ is an indifferent
  component. From Proposition~\ref{prop:indiff_inter}, there exists a point $x$
  in $X$ with a shearing direction, and
  \[
    \wR(x) = \deg_f(x) - 1 + \Nshear(f,x) = \Nshear(f,x) \ge 1.
  \]
  Rumely's Weight Formula shows there are at most $d-1$ crucial points, and
  hence at most $d-1$ non-classical components of $\Fix(f)$.
\end{proof}

The proof of Theorem~B requires Rumely's Weight Formula, so we have delayed it
until now to avoid the appearance of circular logic.

\begin{proof}[Proof of Theorem~B]
  Write $p \ge 0$ for the residue characteristic of $K$. We must show that
  $\Fix(f)$ has no hyperbolic component if $p = 0$ or $p > d =
  \deg(f)$. Suppose otherwise, and let $X$ be a hyperbolic component with
  repelling type~II fixed points $x_1,\ldots, x_n$.

  Suppose first that $p = 0$. Since $\deg_f(x_i) \ge 2$ for each $i$, we see
  that
  \[
     \sum_{i=1}^n \deg_f(x_i) - (n-1) \ge n + 1.
  \]
  But Proposition~\ref{prop:hyperbolic_formula} asserts that the left side of
  this inequality vanishes, which is a contradiction.

  Now suppose that $p > d$. 
  \begin{equation}
    \label{eq:rumely_largep}
  \sum_{i=1}^n \deg_f(x_i) - (n-1) = 1 + \sum_{i=1}^n \left (\deg_f(x_i) - 1\right)
  \le 1 + \wR(x_i) \le d,
  \end{equation}
  where the final inequality is a consequence of Rumely's Weight Formula. Since
  $p > d$, the residue characteristic cannot divide the left side of
  \eqref{eq:rumely_largep}, which contradicts
  Proposition~\ref{prop:hyperbolic_formula}. This completes the proof. 
\end{proof}


\bibliographystyle{amsalpha}
\bibliography{fixed_locus}

\end{document}